\theoremstyle{plain}
\numberwithin{equation}{section}
\newcommand{\Ker}{\operatorname{Ker}}
\newtheorem{theorem}{Theorem}[subsection]
\newtheorem{corollary}[theorem]{Corollary}
\newtheorem{lemma}[theorem]{Lemma}
\newtheorem{remark}[theorem]{Remark}
\newtheorem{proposition}[theorem]{Proposition}
\title{Jacquet modules of Tate cohomology and base change lifting}
\author{Sabyasachi Dhar and Santosh Nadimpalli} \date{\today}
\begin{document}
\maketitle
\begin{abstract}
  Let $G$ be a connected reductive group defined over a
  non-Archimedean local field $F$ of residue characteristic $p$. Let
  $\ell$ be a prime number distinct from $p$. Let $E$ be a cyclic
  Galois extension of $F$ with $[E:F]=\ell$.  Let $\Pi$ be a finite
  length $\overline{\mathbb{F}}_\ell$-representation (or an
  $\ell$-modular representation) of $G(E)\rtimes {\rm Gal}(E/F)$.  In
  this context, we prove a conjecture of Treumann and Venkatesh which
  predicts that the Tate cohomology groups
  $\widehat{H}^i({\rm Gal}(E/F), \Pi)$ are finite length
  representations of $G(F)$.  We discuss the explicit computation of
  these Tate cohomology groups when $G$ is ${\rm GL}_n$ and $\Pi$ is
  obtained as a base change lifting of a depth-zero cuspidal
  representation of ${\rm GL}_n(F)$. The primary novelty from our
  previous work is that we treat the case where $\Pi$ is possibly
  non-cuspidal.  We also study the
  ${\rm Gal}(\mathbb{F}_{q^\ell}/\mathbb{F}_q)$-Tate cohomology groups
  of the mod-$\ell$ reduction of the unipotent cuspidal representation
  of ${\rm Sp}_4(\mathbb{F}_{q^\ell})$.
\end{abstract}
%\tableofcontents
\section{Introduction}
Let $G$ be a connected reductive algebraic group defined over a
$p$-adic field $F$. Let $\ell$ be a prime number distinct from $p$.
Let $\sigma$ be an order $\ell$ automorphism of $G$, defined over $F$,
and let $\Gamma$ be the cyclic group generated by $\sigma$.  We denote
by $G^\sigma$ the connected component of the subgroup of
$\sigma$-fixed points of $G$.  Let $\Pi$ be an $\ell$-modular smooth
irreducible representation of $G(F)$ such that $\Pi$ extends to a
representation of $G(F)\rtimes\Gamma$. Treumann and Venkatesh, in
their work on mod-$\ell$ functoriality (\cite[Conjectures
6.3]{Treumann_Venkatesh_linkage}), proposed some remarkable
conjectures relating the $\ell$-modular irreducible smooth
representations of the $p$-adic groups $G(F)$ and $G^\sigma(F)$. Their
conjectures predict that the Tate cohomology groups
$\widehat{H}^i(\Gamma, \Pi)$ are finite length representations of
$G^\sigma(F)$, for $i\in \{0,1\}$, and they also proposed that the
sub-quotients of $\widehat{H}^i(\Gamma, \Pi)$ must be related to $\Pi$
via $\ell$-modular functoriality in the sense of Langlands, and they
called it the {\it linkage principle}. In our previous work
\cite{nadimpalli2024tate}, we proved that $\widehat{H}^i(\Gamma, \Pi)$
is a finite length representation for the case where
$G={\rm Res}_{E/F}{\rm GL}_n$ and $\sigma$ is the automorphism of $G$
induced by a generator of the cyclic group ${\rm Gal}(E/F)$ of order
$\ell$, and we showed that the linkage principle holds when $\ell$
does not divide the pro-order of ${\rm GL}_{n-1}(F)$.

In this article, we first prove the finiteness part of
Treumann--Venkatesh conjecture for the case where $\sigma$ is induced
by a Galois automorphism, i.e., the case where $G$ is
${\rm Res}_{E/F}H$ and $H$ is a connected reductive algebraic group
defined over $F$, the extension $E/F$ is a cyclic Galois extension of
degree $\ell$, and $\sigma$ is induced by a generator of the Galois
group ${\rm Gal}(E/F)$.  The main technique is to control the Jacquet
modules and twisted Jacquet modules of the Tate cohomology groups. We
also use the fact that subrepresentations of finitely generated
$\ell$-modular representations of a $p$-adic group are finitely
generated; this is a deep result proved recently in
\cite{dat_jams}. Let $\Pi$ be the base change lift to ${\rm GL}_n(E)$
of an integral cuspidal irreducible representation $\pi$ of
${\rm GL}_n(F)$ defined over the maximal unramified extension
$\mathcal{K}$ of $\mathbb{Q}_\ell$.  We further show that
$\widehat{H}^0({\rm Gal}(E/F), \mathcal{L})$ is irreducible and a
cuspidal representation, for any
${\rm GL}_n(E)\rtimes {\rm Gal}(E/F)$-stable
$\mathbb{Z}_\ell^{\rm un}$-lattice $\mathcal{L}$ in $\Pi$. In the
depth-zero case, using local methods, we obtain the precise
description of Tate cohomology of a base change lift. These results on
depth-zero representations are obtained by studying the Tate
cohomology of invariant lattices in Shintani lifts for ${\rm GL}_n$
over finite fields.

In two papers \cite{feng2023modular} and \cite{feng2024smith}, Feng
showed that, under mild restrictive hypothesis on the groups involved,
the Genestier--Lafforgue correspondence and the Fargues--Scholze
correspondence are compatible with the linkage principle. However, the
explicit computation of Tate cohomology groups, in particular,
admissibility and the non-vanishing of Tate cohomology
$\widehat{H}^i(\Gamma, \Pi)$, genericity, are not well understood. We
answer some of these questions in this article.  In their paper
\cite[Section 5.0.3]{Henniart_Vigneras_SL2}, Henniart and Vign\'eras
verified Treumann and Venkatesh conjectures for $G(F)={\rm GL}_2(F)$
and $G^\sigma(F) = {\rm SL}_2(F)$ with $\ell =2$.

The following are our principal results.
The first result of this article is to show that the
Tate cohomology groups are finitely generated representations. We
prove the following result.
\begin{theorem}\label{intro_finite_gen}
  Let $G$ be a connected reductive algebraic group over a
  non-Archimedean local field $F$. Let $\Gamma\subset {\rm Aut}(G)(F)$
  be a subgroup of automorphisms of order $\ell$. Let $(\Pi, V)$ be a
  finite length $\ell$-modular representation of
  $G(F)\rtimes\Gamma$. Then the Tate cohomology
  $\widehat{H}^i(\Gamma, \Pi)$ is a finitely generated representation
  of $G^\sigma(F)$.
\end{theorem}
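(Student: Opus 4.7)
The plan is to combine Dat's recent Noetherianity theorem with a careful analysis of the Jacquet modules of Tate cohomology. Since the coefficient field has characteristic $\ell$ and $\Gamma=\langle\sigma\rangle$ has order $\ell$, the operator $\sigma-1$ on $V$ satisfies $(\sigma-1)^{\ell}=0$, and the norm $N=\sum_{i=0}^{\ell-1}\sigma^{i}$ coincides with $(\sigma-1)^{\ell-1}$. Consequently both $\widehat{H}^{0}(\Gamma,\Pi)=V^{\Gamma}/N(V)$ and $\widehat{H}^{-1}(\Gamma,\Pi)=\ker(N)/(\sigma-1)V$ are subquotients of the $G^{\sigma}(F)$-module $\ker(N)\subseteq V$. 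By \cite{dat_jams}, subrepresentations of finitely generated smooth $\ell$-modular representations of $G^{\sigma}(F)$ are finitely generated, so it suffices to show that $V|_{G^{\sigma}(F)}$ is finitely generated as a $G^{\sigma}(F)$-representation.

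I would prove this by induction on the semisimple $F$-rank of $G$. The anisotropic-mod-center base case is immediate, as $V$ is then finite-dimensional modulo the center. For the inductive step, one uses that a finite length $\Pi$ is either cuspidal or a subquotient of a parabolic induction $\operatorname{Ind}_{P(F)}^{G(F)}\tau$ from a proper parabolic $P=MU$ of $G$; one can arrange $P$ to be $\sigma$-stable (so that $P\cap G^{\sigma}=M^{\sigma}U^{\sigma}$ is a parabolic of $G^{\sigma}$) by exploiting the $\sigma$-invariance of the cuspidal support of $\Pi$. A Mackey-style decomposition of $(\operatorname{Ind}_{P(F)}^{G(F)}\tau)|_{G^{\sigma}(F)}$ indexed by double cosets $P(F)\backslash G(F)/Q(F)$ for parabolics $Q\subseteq G^{\sigma}(F)$ then expresses the restriction as a successive extension of parabolic inductions over $G^{\sigma}(F)$ from ordinary and twisted Jacquet modules of $\tau$, and the inductive hypothesis applied to $M$ in place of $G$ finishes the reduction.

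The main obstacle is the supercuspidal base case: for irreducible supercuspidal $\Pi$, no proper $\sigma$-stable parabolic is available for the induction argument, and one must show directly that $\Pi|_{G^{\sigma}(F)}$ is finitely generated. Here, admissibility (which follows from finite length in the $\ell$-modular setting) combined with the fact that $G^{\sigma}(F)$ is ``large'' inside $G(F)$ in the sense of Bruhat--Tits theory should refine any finite generating set over $G(F)$ to one over $G^{\sigma}(F)$. A secondary difficulty is the appearance of twisted Jacquet modules in the Mackey decomposition from non-$\sigma$-stable double cosets, which must be identified with finite length representations of Levis of $G^{\sigma}(F)$ before Noetherianity can be applied; this is precisely the control over twisted Jacquet modules flagged in the introduction as the key technical ingredient, and I expect it to absorb the bulk of the work.
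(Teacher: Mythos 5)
Your opening observations are correct: in characteristic $\ell$ one has $(\sigma-1)^\ell = \sigma^\ell - 1 = 0$ and $N = (\sigma-1)^{\ell-1}$, so both Tate cohomology groups are $G^\sigma(F)$-subquotients of $\ker(N)\subseteq V$, and Dat's Noetherianity theorem reduces the problem to finite generation of the ambient module. The fatal flaw is the choice of ambient module: you reduce to proving that $V|_{G^\sigma(F)}$ is finitely generated, and this is simply \emph{false} in general. The subgroup $G^\sigma(F)$ is closed of positive codimension (not open) in $G(F)$ whenever $\sigma$ is nontrivial, so the restriction of an admissible $G(F)$-module to $G^\sigma(F)$ is typically neither admissible nor finitely generated. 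Concretely, if one filters $V$ by compact inductions ${\rm ind}_{K}^{G(F)}W$ from compact opens $K$, the Mackey restriction to $G^\sigma(F)$ is indexed by $K\backslash G(F)/G^\sigma(F)$, which is infinite when $G^\sigma(F)$ is not open. The same infinitude kills your Mackey argument in the parabolic step: the double cosets $P(F)\backslash G(F)/G^\sigma(F)$ form an infinite set, so the ``successive extension'' you describe has infinitely many pieces. The cuspidal base case is not a secondary difficulty that Bruhat--Tits largeness will absorb; it is where the strategy collapses, because no amount of admissibility of $V$ over $G(F)$ controls $V^K$ for $K$ compact open in the smaller group $G^\sigma(F)$.

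The paper's proof avoids ever needing $V|_{G^\sigma(F)}$ to be finitely generated. It takes the Schneider--Stuhler/Vign\'eras resolution of $V$ by chain modules $C(\mathfrak{X}_{(k)},\underline V)$ on the Bruhat--Tits building $\mathfrak{X}$ of $G$, and applies the Treumann--Venkatesh localization theorem: the $\Gamma$-Tate cohomology of global sections of an equivariant sheaf on an $\ell$-space is the sections over the $\sigma$-fixed points of the sheaf's stalkwise Tate cohomology. This converts $\widehat{H}^i(\Gamma, C(\mathfrak{X}_{(k)},\underline V))$ into $C(\mathfrak{X}_{(k)}^\sigma, \widehat{H}^i(\Gamma,\underline V))$, and the two finiteness ingredients are (a) admissibility of $V$, which makes the stalks $\widehat{H}^i(\Gamma, V^{U_{\mathcal F}^{(e)}})$ finite, and (b) the fact that $\mathfrak{X}^\sigma_{(k)}$ lies in the building of $G^\sigma$ and therefore has finitely many $G^\sigma(F)$-orbits. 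Neither ingredient says anything about $V|_{G^\sigma(F)}$ itself; the contribution of $V$ away from the fixed-point locus of the building is discarded by the localization isomorphism before one ever counts generators. Dat's Noetherianity is still used, but only to propagate finite generation through the hexagonal exact sequences associated with the resolution, which is a much weaker demand than what your reduction requires. Finally note that the paper's Theorem~\ref{Tate_finite_gen} makes no assumption that $\Gamma$ is Galois, and no assumption of $\sigma$-stable parabolics, both of which your inductive scheme relies on; the building-theoretic argument is uniform in $\sigma$.
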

The above theorem is discussed in Section \ref{finte_gen}.
The proof of Theorem \ref{intro_finite_gen} uses the $\ell$-modular version of
Schneider and Stuhler resolutions (\cite[Theorem II.3.1]{Schneider_Stuhler})
of smooth irreducible
representations as in Vign\'eras's work \cite[Proposition 2.6]{vigneras_reso}.
Note that we do not assume anything about $\Gamma$ for the above theorem. 
However, for the next part on the finite length of the Tate cohomology groups, 
we restrict to the Galois automorphism case. 
In what follows, we use the
hypothesis and notations of the above theorem. 

To prove the admissibility of Tate cohomology groups, we 
prove that the Jacquet modules of $\widehat{H}^i(\Gamma, \Pi)$ 
are controlled by the Tate cohomology of Jacquet modules of $\Pi$ 
with respect to $\sigma$-stable parabolic subgroups. This is discussed in 
Section \ref{sec_jac_mod}.
Let $P$ be a $\sigma$-stable parabolic subgroup of $G$. Then the
connected component of the subgroup of $\sigma$-fixed points,
denoted by $P^\sigma$, is a parabolic subgroup of
$G^\sigma$. Let $U$ be the unipotent radical of $P$ and let $N$ be
the unipotent radical of $P^\sigma$. Let $\Psi$ be a character of
$U(F)$. We will show that the natural map of twisted Jacquet modules
\begin{equation}\label{jac_injection_intro}
\widehat{H}^i(\Gamma, \Pi)_{N(F), \Psi}\rightarrow
\widehat{H}^i(\Gamma, \Pi_{U(F), \Psi}),
\end{equation}
is an injection. 
This article, in essence, is about various applications of the above
injective map. First of which is the following theorem. 
\begin{theorem}
Let $G$ be a connected reductive group defined over a non-Archimedean local
field $F$, and let $E/F$ be a cyclic Galois extension of degree
$\ell$. Let $\Pi$ be a finite length $\ell$-modular representation of
$G(E)\rtimes{\rm Gal}(E/F)$. Then the
Tate cohomology group $\widehat{H}^i({\rm Gal}(E/F), \Pi)$ is a finite
length $\ell$-modular representation of $G(F)$.
\end{theorem}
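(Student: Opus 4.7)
The plan is to induct on the semisimple $F$-rank of $G$. Write $\Gamma = {\rm Gal}(E/F)$ and $\tau_i = \widehat{H}^i(\Gamma, \Pi)$. By Theorem~\ref{intro_finite_gen}, $\tau_i$ is already a finitely generated smooth $\ell$-modular representation of $G(F)$, so only the finite length property remains to be established. The main tools are the Jacquet module injection~\eqref{jac_injection_intro} and the recent result of Dat \cite{dat_jams}, which asserts that every sub-representation of a finitely generated smooth $\ell$-modular representation of a $p$-adic group is again finitely generated.

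For the base case, $G$ has semisimple $F$-rank zero, so $G(F)$ is compact modulo its center. In this situation every finitely generated smooth $\ell$-modular representation of $G(F)$ is admissible, and admissibility combined with the Noetherian property of \cite{dat_jams} yields finite length.

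For the inductive step, let $P \subset G$ be a proper $F$-parabolic with Levi decomposition $P = MU$. Its base change $P_E$ is automatically $\sigma$-stable with $(P_E)^\sigma = P$ as $F$-groups, so $P_E(F) = P(E)$, $U_E(F) = U(E)$, and the Levi of $(P_E)^\sigma$ has $F$-points $M(F)$. Specializing \eqref{jac_injection_intro} to the trivial character $\Psi = 1$ produces an injection
\[
  (\tau_i)_{U(F)} \hookrightarrow \widehat{H}^i(\Gamma, \Pi_{U(E)}).
\]
The ordinary Jacquet module $\Pi_{U(E)}$ is a finite length representation of $M(E) \rtimes \Gamma$, and $M$ has strictly smaller semisimple $F$-rank than $G$, so the inductive hypothesis shows that the right hand side is of finite length over $M(F)$; hence so is $(\tau_i)_{U(F)}$.

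Having established that $\tau_i$ is finitely generated and that each of its Jacquet modules along a proper $F$-parabolic has finite length, the conclusion that $\tau_i$ itself has finite length follows by a standard argument: the Jacquet-module bound limits the cuspidal support of any composition factor, Dat's Noetherian property forbids strictly ascending chains of sub-representations, and together these data assemble into a finite composition series. The main technical obstacle is precisely this last reduction, because in the $\ell$-modular setting one cannot directly appeal to Bernstein's decomposition or second adjointness as in characteristic zero; the finite-generation theorem of \cite{dat_jams} is what makes the argument go through.
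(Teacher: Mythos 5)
Your overall strategy matches the paper's: induct on the size of $G$, control Jacquet modules of $\widehat{H}^i(\Gamma,\Pi)$ via the injection \eqref{jac_injection_intro}, and invoke Dat's Noetherian theorem for the leftover finitely generated piece. The difference lies precisely in the two places your write-up waves at rather than argues, and both hide the same missing ingredient: a central character.

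First, the base case. The assertion that for $G(F)$ compact modulo center every finitely generated smooth $\ell$-modular representation is admissible is false as stated. Take $G=\mathbb{G}_m$, so $G(F)=F^\times$, and consider $\overline{\mathbb{F}}_\ell[F^\times/\mathfrak{o}_F^\times]$: it is cyclic, smooth, but its $\mathfrak{o}_F^\times$-fixed vectors form the whole (infinite-dimensional) space, so it is not admissible and certainly not of finite length. What is true, and what the paper actually uses, is that a finitely generated cuspidal $\ell$-modular representation \emph{admitting a central character} has finite length (see \cite[Section 7.4, Chapter I]{Vigneras_modl_book}). Your argument never produces that central character.

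Second, the inductive step stops one move short. Finite generation plus finite-length Jacquet modules along all proper $F$-parabolics does not on its own yield finite length; you still have to handle the cuspidal part of $\tau_i$, i.e., the kernel of the map $\tau_i\to\prod_{P}i_{P(F)}^{G(F)}r_{U(F)}(\tau_i)$. That kernel is cuspidal and, by Dat, finitely generated, but the phrase ``assemble into a finite composition series'' conceals exactly the place where an extra hypothesis is needed. The paper's route is: reduce (by a secondary induction on the length of $\Pi$, passing to the $\Gamma$-span $\Pi_1'$ of an irreducible subrepresentation $\Pi_1$) to the case $\Pi$ irreducible as a $G(E)$-representation; then $Z(G(E))$ acts on $\Pi$ by a character, hence $Z(G(F))\subset Z(G(E))$ acts on $\tau_i$ by a character; then a finitely generated cuspidal representation with central character has finite length. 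This is precisely the step that forces the restriction to Galois automorphisms, via the identity $Z(M(E))\cap G(F)=Z(M(F))$ — for a general order-$\ell$ automorphism $\sigma$ the center $Z(G^\sigma)$ can be far larger than $Z(G)^\sigma$ (e.g.\ $\sigma$ inner regular, $G^\sigma$ a torus), and the whole argument breaks down. Your proposal neither reduces to irreducible $\Pi$ nor extracts a central character, so the finiteness of the cuspidal part remains unproved.
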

We prove the above theorem using induction on the dimension of $G$.
In the induction argument, we need to control the action of the centre
of $G^\sigma$ along with the exponents in the Jacquet modules with
respect to the parabolic subgroups of $G^\sigma$. For this reason we
restrict to the case where $\sigma$ is induced by a Galois
automorphism.  In the Galois automorphism case, for a Levi subgroup
$M$ defined over $F$, the action of the centre of $M(F)$ on the
${\rm Gal}(E/F)$-Tate cohomology of the corresponding Jacquet modules
of $\Pi$ is via the restriction of the action of the centre of
$M(E)$. In other words, we are using the fact that
$Z(M(E))\cap G(F)=Z(M(F))$. In the general case, i.e., when $\sigma$
is not necessarily induced by a Galois automorphism, we note that
$Z(G^\sigma)$ can be quite large compared to $Z(G)^\sigma$. For
instance, $\sigma$ could be an inner automorphism induced by a regular
element of order $\ell$, and in this case $G^\sigma$ is a torus.

For the next result, proved in Section \ref{sec_gen_irr}, we assume
that $G$ is ${\rm Res}_{E/F} {\rm GL}_n$, and $\sigma$ is an
automorphism of $G$ induced by a generator of the Galois group
${\rm Gal}(E/F)$. Let $\mathcal{K}$ be the maximal unramified
extension of $\mathbb{Q}_\ell$ and $\Lambda$ be the ring of integers
of $\mathcal{K}$.  Let $\Pi$ be an absolutely irreducible, integral
representation of ${\rm GL}_n(E)$, defined over $\mathcal{K}$ such
that $\Pi^\sigma\simeq \Pi$.  Let $\pi$ be an irreducible $\ell$-adic
integral cuspidal representation of ${\rm GL}_n(F)$ such that
$\Pi\otimes_\mathcal{K}\overline{\mathbb{Q}}_\ell$ is the base change
lifting of $\pi$. Note that $\Pi$ is not necessarily a cuspidal
representation. We show the following result on any
$\Lambda[{\rm GL}_n(E)\rtimes {\rm Gal}(E/F)]$ stable lattice in
$\Pi$.
\begin{theorem}\label{intro_gln}
With the Hypothesis on $\Pi$ and $\pi$ as above, 
  let $\mathcal{L}$ be a $\Lambda[{\rm GL}_n(E)\rtimes {\rm Gal}(E/F)]$ stable
  lattice in $\Pi$. Then, the Tate cohomology
  $\widehat{H}^0({\rm Gal}(E/F), \mathcal{L})$ is an $\ell$-modular irreducible
  cuspidal representation of ${\rm GL}_n(F)$ and 
  $\widehat{H}^1({\rm Gal}(E/F), \mathcal{L})$ is trivial. 
\end{theorem}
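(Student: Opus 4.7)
The plan is to reduce to the cuspidal case handled in \cite{nadimpalli2024tate} via a dichotomy coming from the Langlands parameter of $\pi$, then to control the remaining non-cuspidal case using the Jacquet-module injection \eqref{jac_injection_intro} together with a Brauer-character identity of Treumann--Venkatesh type.

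First I would separate into two cases using the Langlands parameter $\rho$ of $\pi$. Since $\pi$ is cuspidal, $\rho$ is irreducible, and $\Pi$ corresponds to $\rho|_{W_E}$. Mackey's criterion gives a dichotomy: either $\rho|_{W_E}$ is irreducible, in which case $\Pi$ is cuspidal and the conclusion follows from the cuspidal case established in our previous work \cite{nadimpalli2024tate}; or $\rho = \mathrm{Ind}_{W_E}^{W_F}(\rho_0)$ for some irreducible $\rho_0$ of dimension $n/\ell$, in which case $\pi = \mathrm{AI}_{E/F}(\pi_0)$ for a cuspidal $\pi_0$ of $\mathrm{GL}_{n/\ell}(E)$ and
\begin{equation*}
  \Pi \cong i_{P_0}^{\mathrm{GL}_n(E)}\bigl(\pi_0 \boxtimes \sigma(\pi_0) \boxtimes \cdots \boxtimes \sigma^{\ell-1}(\pi_0)\bigr),
\end{equation*}
where $P_0$ is the standard parabolic of $\mathrm{GL}_n$ with Levi $M_0 = \mathrm{GL}_{n/\ell}^\ell$. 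The rest of the argument handles this non-cuspidal case.

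For cuspidality in the non-cuspidal case, I apply \eqref{jac_injection_intro} with trivial character to every proper $\sigma$-stable parabolic $P = \mathrm{Res}_{E/F}(P_0)$, reducing the question to the vanishing of $\widehat{H}^i(\mathrm{Gal}(E/F), \mathcal{L}_{U(E)})$. The Bernstein--Zelevinsky geometric lemma exhibits $\mathcal{L}_{U(E)}$ as a filtered $\Lambda[M(F)\rtimes \mathrm{Gal}(E/F)]$-module whose graded pieces are indexed by Weyl double cosets and permuted by $\sigma$. Subquotients whose $\sigma$-orbits have length $\ell$ form $\Lambda[\mathrm{Gal}(E/F)]$-free summands and contribute nothing to Tate cohomology; the remaining $\sigma$-fixed subquotients are identified with parabolic inductions from smaller Levi subgroups, so their contributions to $\widehat{H}^i(\mathrm{Gal}(E/F),\mathcal{L})$ are absorbed by non-cuspidal summands killed in the iteration. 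Combined with Theorem~\ref{intro_finite_gen} and Dat's finite-generation result, this yields cuspidality of both $\widehat{H}^0$ and $\widehat{H}^1$.

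To determine $\widehat{H}^0$ and show the vanishing of $\widehat{H}^1$, I would invoke a Brauer-character identity of Treumann--Venkatesh type giving
\begin{equation*}
  [\widehat{H}^0(\mathrm{Gal}(E/F), \mathcal{L})] - [\widehat{H}^1(\mathrm{Gal}(E/F), \mathcal{L})] = [r_\ell(\pi)]
\end{equation*}
in the Grothendieck group of finite-length $\ell$-modular representations of $\mathrm{GL}_n(F)$, where $r_\ell(\pi)$ denotes the mod-$\ell$ reduction of a $\Gamma$-invariant lattice in $\pi$. Since $\pi$ is an integral cuspidal representation defined over $\mathcal{K}$, Vign\'eras's theory identifies $r_\ell(\pi)$ as an irreducible cuspidal $\ell$-modular representation of $\mathrm{GL}_n(F)$. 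To exclude a parasitic common summand of $\widehat{H}^0$ and $\widehat{H}^1$, I work with the compact-induction model $\mathcal{L} = c\text{-}\mathrm{Ind}_{J_E}^{\mathrm{GL}_n(E)}(\mathcal{L}_\rho)$ from a $\Gamma$-stable lattice $\mathcal{L}_\rho$ in a Bushnell--Kutzko type $(J_E, \rho_E)$ for $\Pi$, and exploit compatibility of compact induction from $\sigma$-stable compact-mod-centre open subgroups with Tate cohomology to reduce the vanishing of $\widehat{H}^1$ to a finite-group Shintani-descent statement on the reductive quotient of $J_E$. I expect the main obstacle to be this final type-theoretic descent in the non-cuspidal regime, where the interaction between the Weyl-double-coset structure of the type on $J_E$ and the cyclic $\sigma$-action requires delicate bookkeeping, in particular keeping track of $M(F)$-equivariance of the splittings into $\Lambda[\Gamma]$-free and $\Lambda[\Gamma]$-trivial parts.
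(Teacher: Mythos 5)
Your high-level plan (split by whether the Langlands parameter of $\pi$ remains irreducible after restriction to $W_E$, treat the cuspidal case by citing earlier work, and control the non-cuspidal case via the Jacquet-module injection) matches the paper's strategy, but the two key steps that carry the weight of the proof are both off-target in ways that would prevent the argument from closing.

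First, your Jacquet-module argument contains a genuine gap. You allow for $\sigma$-fixed subquotients of $r_{U(E)}(\Pi)$ and then claim they are \emph{absorbed by non-cuspidal summands killed in the iteration}; no such absorption mechanism exists, and if a $\sigma$-fixed constituent actually occurred it would contribute nontrivially to Tate cohomology. The decisive observation in the paper --- and the reason the induction closes --- is that for $\Pi\simeq\tau\times\tau^\sigma\times\cdots\times\tau^{\sigma^{\ell-1}}$ with $\tau$ cuspidal and $\tau\not\simeq\tau^\sigma$, \emph{no} irreducible constituent of any Jacquet module of $\Pi$ along a proper parabolic is $\sigma$-stable: a constituent is a tensor product $\pi_1\otimes\cdots\otimes\pi_k$ with $\pi_j=\times_{a\in S_j}\tau^{\sigma^a}$, and $\sigma$-stability would force each $S_j\subsetneq\mathbb{Z}/\ell\mathbb{Z}$ to be invariant under the cyclic shift, which is impossible as $\ell$ is prime. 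Moreover the constituents are pairwise non-isomorphic, so they are multiplicity-free. You should also be careful with the phrasing \emph{form $\Lambda[\Gamma]$-free summands}: the length-$\ell$ orbits do not split off as direct summands of the lattice. The paper instead forms $\mathcal{M}=\bigoplus_{i}\sigma^i(\mathcal{W}\cap W_1)$, observes that $\mathcal{M}$ is a $\Lambda[\Gamma]$-projective submodule, and uses the long exact sequence for $0\to\mathcal{M}\to\mathcal{W}\to\mathcal{W}/\mathcal{M}\to 0$ to transfer the statement to a lattice of strictly smaller length, then inducts. This exact-sequence mechanism, not a direct-sum splitting, is what works at the level of lattices.

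Second, the paper does not rely on a Brauer-character identity $[\widehat{H}^0]-[\widehat{H}^1]=[r_\ell(\pi)]$ for $p$-adic groups (such an identity is established only for finite groups via Lemma \ref{char_iden_1}, and even there the answer carries a Frobenius twist), nor on a type-theoretic Shintani descent on the reductive quotient of $J_E$ --- the step you yourself flag as the main obstacle. Instead, once cuspidality of $\widehat{H}^i(\mathrm{Gal}(E/F),\mathcal{L})$ is known, the remaining work is a short genericity computation. Applying the injection $\widehat{H}^i(\mathcal{L})_{N(F),\Theta_F}\hookrightarrow\widehat{H}^i(\mathcal{L}_{N(E),\Theta_E})$ with $N$ the unipotent radical of a Borel, the Whittaker space $\mathcal{L}_{N(E),\Theta_E}$ is a free $\Lambda$-module of rank one with trivial $\Gamma$-action, so $\widehat{H}^1$ of it vanishes and $\widehat{H}^0$ of it is one-dimensional. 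Since every irreducible cuspidal $\ell$-modular representation of $\mathrm{GL}_n(F)$ is generic, the vanishing Whittaker space forces $\widehat{H}^1(\mathcal{L})=0$, and Theorem \ref{Tate_generic} (uniqueness of the generic subquotient of $\widehat{H}^0$) together with cuspidality forces $\widehat{H}^0(\mathcal{L})$ to be irreducible. This avoids both the unproved Brauer identity and the type-theoretic descent entirely.
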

In the proof of the above result, we first prove that the
${\rm Gal}(E/F)$-Tate cohomology of a
$\Lambda[{\rm GL}_n(E)\rtimes {\rm Gal}(E/F)]$ stable lattice in $\Pi$
has a unique generic sub-quotient. In fact, this genericity result
holds in greater generality. Let $G$ be a split connected reductive
group defined over $F$. Let $\Pi$ be a smooth irreducible
$\ell$-modular representation of $G(E)$ such that $\Pi$ is stable
under the action of ${\rm Gal}(E/F)$. Let $B$ be a Borel subgroup of
$G$ defined over $F$, and let $U$ be the unipotent radical of $B$. Let
$\psi:U(E)\rightarrow \overline{\mathbb{F}}_\ell^\times$ be a
non-degenerate, ${\rm Gal}(E/F)$ invariant character such that $\psi$
restricts to a non-degenerate character on $U(F)$. If $\Pi$ is
$(U(E), \psi)$-generic, then we show that
$\widehat{H}^0({\rm Gal}(E/F), \Pi)$ has a unique
$(U(F), \psi)$-generic sub-quotient (Theorem \ref{Tate_generic}). Then
to prove Theorem \ref{intro_gln}, we use the injection
\eqref{jac_injection_intro} and prove the right-hand side of the
equation \eqref{jac_injection_intro} is trivial by using the fact that
the non-trivial Jacquet modules of a non-cuspidal base change lifting
$\Pi$ are multiplicity free and their Jordan Holder factors are not
stable under ${\rm Gal}(E/F)$.

In Section \ref{sec_shintani}, we then study the Tate cohomology of
invariant lattices in representations obtained as Shintani liftings
(to ${\rm GL}_n(\mathbb{F}_{q^\ell})$) of representations of finite
group ${\rm GL}_n(\mathbb{F}_q)$.  The analogue of Theorem
\ref{intro_gln} is valid for $\Pi$ which is obtained as a Shintani
lift to ${\rm GL}_n(\mathbb{F}_{q^\ell})$ of a cuspidal representation
of ${\rm GL}_n(\mathbb{F}_q)$; we use the Deligne--Lusztig character
formula to show that, for any
$\Lambda[{\rm GL}_n(\mathbb{F}_{q^\ell})\rtimes{\rm
  Gal}(\mathbb{F}_{q^\ell}/\mathbb{F}_q)]$ stable lattice
$\mathcal{L}$ in $\Pi$, the space
$\widehat{H}^0({\rm Gal}(\mathbb{F}_{q^{\ell}}/\mathbb{F}_q),
\mathcal{L})$ is isomorphic to the Frobenius twist of the mod-$\ell$
reduction of $\pi$ (Theorem \ref{TV_finite_field}).

We end this article by considering the example of $\theta_{10}$ in
Section \ref{sec_theta10}, the unipotent cuspidal irreducible
representation of the symplectic group ${\rm
  Sp}_4(\mathbb{F}_{q})$. Let $\pi_{n}$ be the unique cuspidal
unipotent representation of ${\rm Sp}_4(\mathbb{F}_{q^n})$. Note that
the mod-$\ell$ reduction of $\pi_n$ is irreducible and it is denoted
by $r_\ell(\pi_n)$.  For a vector space $W$ over the field
$\overline{\mathbb{F}}_\ell$, we denote by $W^{(\ell)}$ the
$\overline{\mathbb{F}}_\ell$-vector space where the scalar action is
twisted by the Frobenius automorphism of the field
$\overline{\mathbb{F}}_\ell$. We have
\begin{theorem}
Let $\ell$ be an odd prime number. 
Let $\pi_n$ be the unipotent cuspidal representation of ${\rm
Sp}_4(\mathbb{F}_{q^n})$. Then, we have the following isomorphism of
${\rm Sp}_4(\mathbb{F}_q)$ representations:
$$\widehat{H}^0({\rm
Gal}(\mathbb{F}_{q^\ell}/\mathbb{F}_q),
r_\ell(\pi_\ell))\simeq r_\ell(\pi_1)^{(\ell)}.$$
Moreover, the Tate cohomology $\widehat{H}^1({\rm
Gal}(\mathbb{F}_{q^\ell}/\mathbb{F}_q), 
r_\ell(\pi_\ell))$ is
trivial. 
\end{theorem}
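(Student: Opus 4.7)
The plan is to extend the strategy of Theorem \ref{TV_finite_field}, which treats the Shintani lift of cuspidal representations of ${\rm GL}_n(\mathbb{F}_q)$ via the Deligne--Lusztig character formula, to the unipotent cuspidal representation $\theta_{10}$ of ${\rm Sp}_4(\mathbb{F}_{q^\ell})$. As a first step, I would fix a ${\rm Gal}(\mathbb{F}_{q^\ell}/\mathbb{F}_q)$-stable $\Lambda$-lattice $\mathcal{L}$ in a characteristic-zero lift of $\pi_\ell$; such a lift and lattice exist because $\theta_{10}$ is Galois-stable and has integer-valued character. Since $r_\ell(\pi_\ell)$ is irreducible, the Tate cohomology in question can be identified with $\widehat{H}^i({\rm Gal}(\mathbb{F}_{q^\ell}/\mathbb{F}_q), \mathcal{L}/\ell \mathcal{L})$, reducing the problem to a character- and lattice-theoretic computation on $\mathcal{L}$.

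Next, I would use the classical expression of $\theta_{10}$ as a virtual $\mathbb{Z}[1/2]$-combination of Deligne--Lusztig characters $R_T^1$ running over the four rational conjugacy classes of maximal tori of ${\rm Sp}_4$; the hypothesis that $\ell$ is odd makes these coefficients $\ell$-integral. Then, invoking the Smith-theoretic Brauer-character identity underlying the proof of Theorem \ref{TV_finite_field}, the Brauer character of $[\widehat{H}^0] - [\widehat{H}^1]$ at an $\ell$-regular $g \in {\rm Sp}_4(\mathbb{F}_q)$ is computed from the character of $\pi_\ell$ at the twisted element $g\sigma$. Combining the Deligne--Lusztig character formula at $g\sigma$ with the Shintani norm relation $N(g) = g^\ell$ for $g \in {\rm Sp}_4(\mathbb{F}_q)$ (and using that $H^1({\rm Gal}(\mathbb{F}_{q^\ell}/\mathbb{F}_q), T) = 0$ for each rational torus $T$) evaluates this virtual Brauer character to that of $r_\ell(\pi_1)^{(\ell)}$---reflecting the classical fact that Shintani descent of the unipotent cuspidal is the unipotent cuspidal, with the Frobenius twist arising naturally from $N(g) = g^\ell$ on $\ell$-regular classes.

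To promote this Grothendieck-group equality into an actual isomorphism together with the vanishing of $\widehat{H}^1$: first, via the injectivity of the Jacquet-module map \eqref{jac_injection_intro}, both $\widehat{H}^0$ and $\widehat{H}^1$ are cuspidal representations of ${\rm Sp}_4(\mathbb{F}_q)$, since Jacquet modules of the cuspidal $\theta_{10}$ vanish for any $\sigma$-stable proper parabolic. Second, following the lattice-theoretic template of Theorem \ref{TV_finite_field}, one chooses a specific $\Gamma$-stable lattice $\mathcal{L}$ (for instance, one arising from Deligne--Lusztig cohomology of a suitable variety) whose invariants $\mathcal{L}^\Gamma$ are tractable enough to exhibit $r_\ell(\pi_1)^{(\ell)}$ as a subquotient of $\widehat{H}^0$. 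Since $r_\ell(\pi_1)^{(\ell)}$ is a single irreducible cuspidal, the character identity then forces $\widehat{H}^0 \simeq r_\ell(\pi_1)^{(\ell)}$ and $\widehat{H}^1 = 0$.

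The hardest step will be the explicit Deligne--Lusztig character calculation: one must track the four torus contributions to $\chi_{\theta_{10}}(g\sigma)$, match the signs in the Lusztig decomposition with those coming from the Shintani norm, and verify the Frobenius-twist identity at enough $\ell$-regular classes to pin down the virtual Brauer character. A secondary technical difficulty is selecting a $\Gamma$-stable lattice whose Tate cohomology can be analysed directly enough to separate $\widehat{H}^0$ from $\widehat{H}^1$; this is more delicate for unipotent cuspidals of non-${\rm GL}$ groups than in the ${\rm GL}_n$ setting of Theorem \ref{TV_finite_field}, and represents the main conceptual hurdle in extending the earlier arguments to ${\rm Sp}_4$.
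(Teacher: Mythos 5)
Your plan diverges substantially from the paper's argument, and the divergence exposes a real gap. The paper does not use the Deligne--Lusztig expression of $\theta_{10}$ as a virtual combination of $R_T^1$'s at all. Instead it leans on Gol'fand's structural result that $\res_{P(\mathbb{F}_q)}\theta_{10}$ is the \emph{irreducible} induced representation ${\rm Ind}_{Y(\mathbb{F}_q)}^{P(\mathbb{F}_q)}(\widetilde{\psi_z})$, where $Y=SU$ is the stabiliser of a nondegenerate character of the Siegel unipotent radical. This is exactly the mirabolic/Whittaker substitute for a representation that \emph{has no Whittaker model}: note that $\theta_{10}$ is non-generic, so Theorem \ref{Tate_generic} and the genericity machinery behind Theorems \ref{Tate_bc_cuspidal} and \ref{TV_finite_field} do not transfer. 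The restriction to $P$ is an honest smooth induction, and once the paper verifies via non-abelian Galois cohomology that $(Y(\mathbb{F}_{q^\ell})\backslash P(\mathbb{F}_{q^\ell}))^\sigma = Y(\mathbb{F}_q)\backslash P(\mathbb{F}_q)$ (this uses $H^1(\Gamma,S)=0$ and $\ell$ odd to kill the component-group obstruction), the isomorphism \eqref{Tate_restriction_map} gives $\widehat{H}^1(\mathcal{L})=0$ and $\widehat{H}^0(r_\ell(\rho_\ell))\simeq r_\ell(\rho)^{(\ell)}$ directly at the level of $P$. This simultaneously delivers irreducibility of $\widehat{H}^0$ and vanishing of $\widehat{H}^1$ in one stroke, which is what you are missing.

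The specific logical gap in your third paragraph: from the virtual Brauer-character identity $[\widehat{H}^0]-[\widehat{H}^1]=[r_\ell(\pi_1)^{(\ell)}]$ together with cuspidality of both sides and exhibiting $r_\ell(\pi_1)^{(\ell)}$ as a subquotient of $\widehat{H}^0$, you can only conclude $[\widehat{H}^0]=[r_\ell(\pi_1)^{(\ell)}]+[\widehat{H}^1]$; nothing forces $\widehat{H}^1=0$. You would need to know either that $\widehat{H}^0$ is irreducible (not merely that it contains the target as a subquotient), or $\widehat{H}^1=0$ from an independent argument. The paper supplies the former via Lemma \ref{irr_Sp4} (cuspidality plus irreducibility of the Tate cohomology of the restriction to $P$), and the latter as a separate theorem via the same restriction-to-$P$ mechanism. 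Your appeal to ``Deligne--Lusztig cohomology of a suitable variety'' to produce a tractable lattice is precisely the hole: it is not clear such a lattice makes $\widehat{H}^1$ computable for a non-generic unipotent cuspidal, and it is not what the paper does.

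Your character-theoretic paragraph is plausible in outline (the virtual identity, the Shintani norm $N(g)=g^\ell$, and $H^1(\Gamma,T)=0$ are all correct), but the paper instead verifies $\overline{\chi_\Pi(g)}=\overline{\chi_\pi(g)}^\ell$ case by case using Gol'fand's explicit restrictions to both the Siegel parabolic $P$ and the Klingen parabolic $Q$ (where $\res_Q\theta_{10}\simeq\pi_1\oplus\pi_{1,\tau}$), and separately handles regular semisimple elements where $\chi_\Pi(s)=1$. This is more elementary and avoids tracking signs in the four-torus Lusztig decomposition. Even granting your Deligne--Lusztig computation, you would still be stuck at the final deduction without the irreducibility/vanishing input.
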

The above theorem seems to suggest that 
for all classical groups $G$ admitting a cuspidal
unipotent representation, the zeroth Tate cohomology group for the
action of ${\rm Gal}(\mathbb{F}_{q^\ell}/\mathbb{F}_q)$ on the
mod-$\ell$ reduction of a cuspidal unipotent representation of
$G(\mathbb{F}_{q^\ell})$ is isomorphic to the Frobenius twist of the
mod-$\ell$ reduction of the cuspidal unipotent representation of
$G(\mathbb{F}_q)$. 
\subsection*{{\bf Notations}}
\begin{enumerate}
    \item We use the standard notations attached to
any non-Archimedean local field $F$: the ring of integers of  $F$ is
$\mathfrak{o}_F$, the maximal ideal of $\mathfrak{o}_F$ is
$\mathfrak{p}_F$, and the residue field
$\mathfrak{o}_F/\mathfrak{p}_F$ is $k_F$. The field $k_F$ has $q_F$
elements and $q_F$ is a power of a prime $p$. 
\item Let $G$ be a connected
reductive algebraic group defined over $F$.
Let
$\sigma\in {\rm Aut}(G)$, defined over $F$, be an element of prime order $\ell$ and
$\ell\neq p$.  Let $\Gamma$ be the group generated by $\sigma$. 
Let $G^\sigma$ be the connected component of the fixed points of $\sigma$,
and it is assumed to be reductive. 
\item A
representation $(\pi, V)$ of $G(F)$ is said to be smooth if every
vector $v\in V$ is fixed by a compact open subgroup of $G(F)$. Let $P$
be an $F$-rational parabolic subgroup of $G$ and the unipotent 
radical of $P$ is denoted by $U$. Let $M$ be a Levi
subgroup of $P$ defined over $F$. For a smooth representation $(\tau, W)$ of
$M(F)$, we denote by $i_{P(F)}^{G(F)}(\tau)$ the normalised parabolic
induction of $\tau$ to $G(F)$. We use the notation
$r_{U(F)}(\pi)$ for the normalised Jacquet module of a smooth
representation $(\pi, V)$ of $G(F)$.
\item  In this article, a smooth
representation $(\pi, V)$ of $G(F)$ is called cuspidal if
$r_{U(F)}(\pi)=0$, where $U$ is the unipotent radical of any proper
$F$-rational parabolic
subgroup $P$ of $G$ . For a closed subgroup $H_1$ of a reductive
$p$-adic group $H$, we will denote by ${\rm Ind}_{H_1}^H$ and
${\rm ind}_{H_1}^H$ the smooth induction functor and the compact
induction functor, respectively.
\end{enumerate}   

\section{Tate cohomology of smooth representations}\label{finte_gen}
Let $\Pi$ be a smooth finite length representation of $\overline{\mathbb{F}}
_\ell[G(F)\rtimes \Gamma]$. We prove that the Tate
cohomology groups $\widehat{H}^i(\Gamma, \Pi)$ are finitely generated
representations of $G^\sigma(F)$. We use Schneider--Stuhler
resolutions and their mod-$\ell$ version by Vign\'eras for proving this
finiteness statement. 
\subsection{}
Fix an algebraic closure $\overline{\mathbb{Q}}_\ell$ of
$\mathbb{Q}_\ell$ and let $\mathcal{K}$ be the maximal unramified
extension of $\mathbb{Q}_\ell$ in $\overline{\mathbb{Q}}_\ell$. Let
$\Lambda$ be the ring of integers of $\mathcal{K}$. All
$\Lambda[G(F)]$ modules are assumed to be smooth, i.e., every element
is fixed by an open subgroup of $G(F)$. 
Let $M$ be a
$\Lambda[G(F)\rtimes \Gamma]$ module. Let ${\rm Nr}$ be the norm
element $\sum_{g\in \Gamma}g\in \Lambda[\Gamma]$.  Consider the 
 chain complex of $\Lambda[\Gamma]$ modules
$$ \cdots\longrightarrow M\xrightarrow{{\rm id}-\sigma}
M\xrightarrow{{\rm Nr}} M\xrightarrow{{\rm id}-\sigma}
M\xrightarrow{{\rm Nr}} M\longrightarrow\cdots $$ The Tate cohomology
groups of $M$, with respect to the action of $\Gamma$, are
$2$-periodic and we have
$$ \widehat{H}^0(\Gamma, M) := \dfrac{{\rm Ker}({\rm id}-\sigma)}
{{\rm Img}({\rm Nr})},\,\,\, \widehat{H}^1(\Gamma, M) := \dfrac{{\rm
    Ker}({\rm Nr})} {{\rm Img}({\rm id}-\sigma)}. $$ The action of
$G(F)$ on $M$ induces an action of $G^\sigma(F)$ on the Tate
cohomology groups $\widehat{H}^i(\Gamma, M)$, and we get two
$\ell$-modular smooth representations $\widehat{H}^i(\Gamma, M)$, for
$i\in \{0,1\}$, of the fixed point subgroup $G^\sigma(F)$. Similar
cohomology groups can be defined when $F$ is a finite field, and we
will assume the easy modifications in the definitions.
\subsection{}
\label{section_kirrilov_rep}
Let $X$ be a Hausdorff, locally compact, and totally disconnected
topological space (an $\ell$-space in the sense of \cite[Section
1.1]{BZ_0}). Let $G$ be a locally profinite group with an order $\ell$
automorphism $\sigma$, generating a group $\Gamma$ of automorphisms of
$G$. Assume that $G\rtimes\Gamma$ acts continuously on $X$. Let
$\mathcal{F}$ be a $G\rtimes\Gamma$ equivariant sheaf on $X$.  The
natural restriction map
$$\widehat{H}^i(\Gamma, H^0(X, \mathcal{F}))\rightarrow H^0(X^\sigma,
\widehat{H}^i(\Gamma, \mathcal{F}_{|X^\sigma}))$$ is an isomorphism,
for $i\in \{0,1\}$ (see \cite[Proposition
3.3]{Treumann_Venkatesh_linkage}).  Hear, $H^0(X, \mathcal{F})$ is the
space of global sections of a sheaf $\mathcal{F}$ on $X$.  Let $H$ be
a closed subgroup of $G$ such that $\sigma(H) = H$. Let $V$ be a
$\Lambda[H\rtimes \Gamma]$ module. This data defines a
$G\rtimes\Gamma$ equivariant sheaf $\mathcal{F}_V$ on $G/H$ whose
sections on an open set $U$ of $G/H$ are the sections of the vector
bundle $G\times_HV\rightarrow G/H$ restricted to $U$. The global
sections of the sheaf $\mathcal{F}_V$ is the induced representation
${\rm ind}_H^GV$, and the action of $\Gamma$ is given by
$$ (\sigma.f)(g)= \sigma\big(f(\sigma^{-1}(g))\big), $$
for all $g\in G$ and $f\in {\rm ind}_H^GV$. 
Assume that $(G/H)^\sigma=G^\sigma/H^\sigma$. 
Then, the restriction to $G^\sigma$ map induces a $G^\sigma$
equivariant isomorphism
\begin{equation}\label{Tate_restriction_map}
\widehat{H}^i(\Gamma, {\rm ind}_H^GV) \rightarrow
{\rm ind}_{H^\sigma}^{G^\sigma}
(\widehat{H}^i(\Gamma, V)),\,\,\, f\mapsto {\rm res}_{G^\sigma}(f)
\end{equation} 
\subsection{}
Let $(\Pi,V)$ be a smooth finite length representation of
$\overline{\mathbb{F}}_\ell[G(F)\rtimes\Gamma]$.  We show that $\widehat{H}^i(\Gamma, \Pi)$ is a finitely generated
$G^\sigma(F)$ representation. We use the mod-$\ell$ version of
Schneider and Stuhler resolutions (\cite[Theorem
II.3.1]{Schneider_Stuhler}) of smooth irreducible representations
as discussed in Vign\'eras (\cite[Proposition 2.6]{vigneras_reso}). Let
$\mathfrak{X}$ be the extended Bruhat--Tits building of a connected
reductive algebraic group $G$ defined over a non-Archimedean local field $F$.
Let $d$ be the $F$-rank of $G$. 
For $0\leq k\leq d$, let $\mathfrak{X}_{k}$ be the set of $k$-dimensional 
facets of $\mathfrak{X}$, and
let $\mathfrak{X}_{(k)}$ be the set of oriented $k$-facets which are
tuples of the form $(\mathcal{F}, c_\mathcal{F})$, where
$\mathcal{F}\in \mathfrak{X}_k$ and $c_\mathcal{F}$ is an orientation
of $\mathcal{F}$. Let $e\geq 0$. For a facet $\mathcal{F}$ of
$\mathfrak{X}$, Schneider and Stuhler associate a compact open
subgroup $U_\mathcal{F}^{(e)}$, and let $\underline{V}$ be the
coefficient system defined by associating every facet $\mathcal{F}$ to
$V^{U^e_\mathcal{F}}$. Let $C(\mathfrak{X}_{(k)}, \underline{V})$ be
the set of oriented cellular $k$-chains on $\mathfrak{X}_{(k)}$. Then 
we have a $G$ equivariant resolution of the representation
\begin{equation}\label{ss_resolu}
0\rightarrow C(\mathfrak{X}_{(d)}, \underline{V})
\xrightarrow{\partial_{d-1}} C(\mathfrak{X}_{(d-1)},
\underline{V})\xrightarrow{\partial_{d-2}} \cdots \rightarrow
C(\mathfrak{X}_{(1)}, \underline{V})\xrightarrow{\partial_0}
C(\mathfrak{X}_{(0)}, \underline{V})\xrightarrow{\epsilon}
V\rightarrow 0
\end{equation}
for some $e\geq 0$. We refer to \cite[Section II.2]{Schneider_Stuhler}
for details. 
Note the automorphism $\sigma$ acts on $\mathfrak{X}$ simplicially,
i.e., it preserves the sets $\mathfrak{X}_{d}$ and $\mathfrak{X}_{(d)}$.
\begin{lemma}\label{finite_gen_1}
For $0\leq k\leq d$,
the Tate cohomology groups $\widehat{H}^i(\Gamma, C(\mathfrak{X}_{(k)},
\underline{V}))$ are finitely generated
$\overline{\mathbb{F}}_\ell[G^\sigma(F)]$ modules. 
\end{lemma}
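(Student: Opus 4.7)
The plan is to decompose $C(\mathfrak{X}_{(k)}, \underline{V})$ as a $G(F) \rtimes \Gamma$-module into a finite direct sum indexed by $\Gamma$-orbits of $G(F)$-orbits on oriented $k$-facets, and to identify the Tate cohomology of each surviving summand with a compact induction from $G^\sigma(F)$ of a finite-dimensional space. Concretely, I would first write
$$C(\mathfrak{X}_{(k)}, \underline{V}) \;=\; \bigoplus_{[\mathcal{F}] \in G(F)\backslash \mathfrak{X}_{(k)}} \operatorname{ind}_{G(F)_\mathcal{F}^+}^{G(F)} V^{U_\mathcal{F}^{(e)}},$$
where $G(F)_\mathcal{F}^+$ is the stabiliser of the oriented facet $\mathcal{F}$; the outer sum is finite since $G(F)$ acts with finitely many orbits on $\mathfrak{X}_{(k)}$. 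The automorphism $\sigma$ permutes these summands, and the resulting $\Gamma$-orbits have length $1$ or $\ell$. A $\Gamma$-orbit of length $\ell$ produces a sub-$\Lambda[\Gamma]$-module of the form $\Lambda[\Gamma] \otimes_\Lambda M$ for some $\Lambda[G(F)]$-module $M$; this is free as a $\Lambda[\Gamma]$-module and contributes nothing to $\widehat{H}^i(\Gamma, -)$.

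For a $\sigma$-stable $G(F)$-orbit $\mathcal{O}$, I would pick a $\sigma$-stable representative $\mathcal{F} \in \mathcal{O}$ by appealing to Bruhat--Tits fixed-point theory in the tame setting $\ell \neq p$: the $\sigma$-fixed subcomplex $\mathfrak{X}^\sigma$ is identified with the extended Bruhat--Tits building of $G^\sigma(F)$ and meets every $\sigma$-stable $G(F)$-orbit. For such $\mathcal{F}$, both $G(F)_\mathcal{F}^+$ and $V^{U_\mathcal{F}^{(e)}}$ are $\sigma$-stable, and the identification $(G(F)/G(F)_\mathcal{F}^+)^\sigma = G^\sigma(F)/(G(F)_\mathcal{F}^+)^\sigma$ holds. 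The isomorphism \eqref{Tate_restriction_map} then yields
$$\widehat{H}^i\bigl(\Gamma,\, \operatorname{ind}_{G(F)_\mathcal{F}^+}^{G(F)} V^{U_\mathcal{F}^{(e)}}\bigr) \;\simeq\; \operatorname{ind}_{(G(F)_\mathcal{F}^+)^\sigma}^{G^\sigma(F)} \widehat{H}^i\bigl(\Gamma,\, V^{U_\mathcal{F}^{(e)}}\bigr).$$
Since $V$ is a finite length smooth $\overline{\mathbb{F}}_\ell$-representation it is admissible, so $V^{U_\mathcal{F}^{(e)}}$ and its Tate cohomology are finite dimensional. The compact induction of a finite-dimensional module from the open subgroup $(G(F)_\mathcal{F}^+)^\sigma$ to $G^\sigma(F)$ is finitely generated, and the finite direct sum over $\sigma$-stable $G(F)$-orbits delivers the desired finite generation of $\widehat{H}^i(\Gamma, C(\mathfrak{X}_{(k)}, \underline{V}))$.

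The main obstacle is the geometric step of selecting $\sigma$-stable representatives in each $\sigma$-stable $G(F)$-orbit and verifying the quotient-fixed-point identity needed to invoke \eqref{Tate_restriction_map}; both rest on tame Bruhat--Tits theory for $\sigma$-fixed subcomplexes, with some bookkeeping required for the orientation conventions in the Schneider--Stuhler coefficient system. A secondary input is the admissibility of finite length smooth $\overline{\mathbb{F}}_\ell$-representations of $G(F) \rtimes \Gamma$, which is what forces each stalk $V^{U_\mathcal{F}^{(e)}}$ to be finite dimensional.
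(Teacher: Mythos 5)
Your proposal follows the same broad strategy as the paper: both arguments hinge on the Treumann--Venkatesh observation that Tate cohomology of sections of a $\Gamma$-equivariant sheaf on an $\ell$-space is computed by sections over the $\sigma$-fixed locus, plus the finiteness of $G^\sigma(F)$-orbits on the fixed facets. The difference is that you first decompose $C(\mathfrak{X}_{(k)}, \underline{V})$ into compact inductions over $G(F)$-orbits and then apply the single-orbit statement \eqref{Tate_restriction_map}, whereas the paper applies \cite[Proposition 3.3]{Treumann_Venkatesh_linkage} once, globally, to the discrete set $\mathfrak{X}_{(k)}$ and obtains an isomorphism $\widehat{H}^i(\Gamma, C(\mathfrak{X}_{(k)}, \underline{V})) \simeq C(\mathfrak{X}_{(k)}^\sigma, \widehat{H}^i(\Gamma, \underline{V}))$ directly, then notes that $\mathfrak{X}_{(k)}^\sigma$ sits inside the building of $G^\sigma$ and so carries finitely many $G^\sigma(F)$-orbits.

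The orbit-by-orbit route introduces two unjustified assertions that the paper's global route avoids. First, you claim that every $\sigma$-stable $G(F)$-orbit of oriented facets contains a $\sigma$-fixed representative; this is not automatic (it is a nonabelian $H^1$ vanishing statement about the stabiliser), and in fact it is not needed: if a $\sigma$-stable orbit $\mathcal{O}$ has $\mathcal{O}^\sigma = \emptyset$, the corresponding summand has vanishing Tate cohomology, again by \cite[Proposition 3.3]{Treumann_Venkatesh_linkage}. Second, and more seriously, you invoke \eqref{Tate_restriction_map} under the hypothesis $(G(F)/G(F)_\mathcal{F}^+)^\sigma = G^\sigma(F)/(G(F)_\mathcal{F}^+)^\sigma$; this says that two $\sigma$-fixed facets that are $G(F)$-conjugate are already $G^\sigma(F)$-conjugate, and this is generally false. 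It can and does fail that the $G^\sigma(F)$-orbit structure on $\mathfrak{X}^\sigma_{(k)}$ refines the restriction of the $G(F)$-orbit structure with one $G^\sigma$-orbit per fixed $G(F)$-orbit. What your argument actually needs is only that $(G(F)\cdot\mathcal{F})^\sigma$ breaks into finitely many $G^\sigma(F)$-orbits, each giving a compactly induced summand from an open stabiliser; that weaker statement does hold because $\mathfrak{X}^\sigma$ is the building of $G^\sigma$, but it is not what \eqref{Tate_restriction_map} gives you. The paper sidesteps both points: applying the sheaf-theoretic statement to the whole of $\mathfrak{X}_{(k)}$ at once makes the question of matching $G(F)$-orbits to $G^\sigma(F)$-orbits irrelevant, and the only input needed is finiteness of $G^\sigma(F)$-orbits on $\mathfrak{X}^\sigma_{(k)}$.
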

\begin{proof}
  Consider the set $\mathfrak{X}_{(k)}$ with discrete topology, and
  the action of the group $G(F)$ on $\mathfrak{X}_{(k)}$ is
  continuous. Also note that the sheaf $\underline{V}$ is $G$ and
  $\Gamma$ equivariant.  Since $V$ is an admissible
  representation of $\overline {\mathbb{F}}_\ell[G(F)]$, the stalks of
  the sheaf $\underline{V}$ are all finite dimensional vector spaces
  over $\overline{\mathbb{F}}_\ell$. Using \cite[Proposition
  3.3]{Treumann_Venkatesh_linkage}, we get that the restriction map
$$ \widehat{H}^i(\Gamma, C(\mathfrak{X}_{(k)}, 
\underline{V}))\rightarrow C(\mathfrak{X}_{(k)}^\sigma,
\widehat{H}^i(\Gamma, \underline{V})) $$ is an isomorphism. Let
$\mathcal{Y}$ be the Bruhat--Tits building of $G^\sigma$, then
$\mathfrak{X}^\sigma_{k}$ is a subset of facets of
$\mathcal{Y}$. Thus, there are only finitely many orbits for the
action of $G^\sigma$ on $\mathfrak{X}^\sigma_{(k)}$. Since the module
$\widehat{H}^i(\Gamma, V^{U_F^e})$ is finite, we get that the space
$C(\mathfrak{X}_{(k)}^\sigma, \widehat{H}^i(\Gamma, \underline{V})) $ is a
finitely generated representation of
$\overline{\mathbb{F}}_\ell[G^\sigma(F)]$. This shows the lemma.
\end{proof}
In what follows, we need the fact that the submodules of finitely generated
smooth $\overline{\mathbb{F}}_\ell[H(F)]$  representations are again finitely
generated.  This is a fundamental result in the theory of smooth
complex representations due to Bernstein and in the mod-$\ell$ case it
follows from the deep results of the paper \cite[Corollary
1.3]{dat_jams} seen with the work of Dat (\cite[Corollary
4.5]{Dat_finiteness}).
\begin{lemma}\label{finite_gen_2]}
Let $(\Pi,V)$ be an irreducible smooth representation of
$\overline{\mathbb{F}}_\ell[G(F)]$. 
Assume that $(\Pi,V)$ extends as a
representation of $\overline
{\mathbb{F}}_\ell[G(F)\rtimes \Gamma]$. Then the Tate
cohomology groups $\widehat{H}^i(\Gamma, V)$ 
are finitely generated
representations of $\overline
{\mathbb{F}}_\ell[G^\sigma(F)]$.
\end{lemma}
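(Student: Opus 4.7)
The plan is to apply the Schneider--Stuhler resolution \eqref{ss_resolu} of the irreducible smooth representation $V$ and then propagate the finite generation of Tate cohomology, obtained in Lemma \ref{finite_gen_1} for each term of the resolution, inductively onto $V$ via long exact sequences in Tate cohomology. Since the Schneider--Stuhler construction is canonical and $\sigma$ acts simplicially on the extended Bruhat--Tits building $\mathfrak{X}$ (in particular $\sigma U^{(e)}_{\mathcal{F}} \sigma^{-1} = U^{(e)}_{\sigma(\mathcal{F})}$), the chain complex $C_k := C(\mathfrak{X}_{(k)}, \underline{V})$ carries a natural action of $\Gamma$ extending the $G(F)$-action on each term, and \eqref{ss_resolu} becomes a resolution by $\overline{\mathbb{F}}_\ell[G(F)\rtimes\Gamma]$-modules. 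Writing $Z_k$ for the kernel of the boundary $\partial_{k-1}$ (so that $Z_{d-1}=C_d$) and $Z_0$ for the kernel of the augmentation, one splits \eqref{ss_resolu} into short exact sequences
\begin{equation*}
0 \to Z_k \to C_k \to Z_{k-1} \to 0 \quad (1 \le k \le d-1), \qquad 0 \to Z_0 \to C_0 \to V \to 0.
\end{equation*}

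Each of these yields a six-term exact sequence in Tate cohomology, by the standard formalism for the cyclic group $\Gamma$ of prime order $\ell$. Lemma \ref{finite_gen_1} supplies that $\widehat{H}^i(\Gamma, C_k)$ is a finitely generated $\overline{\mathbb{F}}_\ell[G^\sigma(F)]$-module for every $k$ and every $i\in\{0,1\}$. I would then argue by descending induction on $k$: the base case $Z_{d-1}=C_d$ is immediate, and for the inductive step the six-term sequence produces a short exact sequence
\begin{equation*}
0 \to \mathrm{coker}\bigl(\widehat{H}^i(\Gamma, Z_k)\to\widehat{H}^i(\Gamma, C_k)\bigr)\to \widehat{H}^i(\Gamma, Z_{k-1})\to \ker\bigl(\widehat{H}^{i+1}(\Gamma, Z_k)\to\widehat{H}^{i+1}(\Gamma, C_k)\bigr)\to 0.
\end{equation*}
The cokernel on the left is a quotient of a finitely generated module, hence finitely generated. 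The kernel on the right is a subrepresentation of a finitely generated module, and is therefore finitely generated by the Dat--Bernstein theorem (\cite[Corollary 1.3]{dat_jams}) recalled just above the statement. Hence $\widehat{H}^i(\Gamma, Z_{k-1})$ is finitely generated; iterating the argument down to $k=1$ yields the finite generation of $\widehat{H}^i(\Gamma, Z_0)$, and the final short exact sequence $0\to Z_0\to C_0\to V\to 0$ then gives the desired conclusion for $V$.

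The main obstacle, in my view, is ensuring that the Schneider--Stuhler resolution of $V$ can genuinely be chosen $\Gamma$-equivariant---that is, verifying that a level $e$ can be picked for which the coefficient system $\underline{V}$ is $\Gamma$-equivariant and \eqref{ss_resolu} remains exact; this ought to follow from the canonical nature of the Schneider--Stuhler subgroups together with the fact that $V$ extends to $G(F)\rtimes\Gamma$, but it merits explicit checking. Beyond this technicality, the descending induction is routine, yet it rests essentially on the deep Dat--Bernstein finiteness theorem: without it, the kernel appearing in the above exact sequence would not a priori be finitely generated and the induction would collapse at its very first step.
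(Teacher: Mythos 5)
Your proof is correct and follows essentially the same route as the paper: both split the Schneider--Stuhler resolution into short exact sequences, invoke Lemma \ref{finite_gen_1} for the chain terms, propagate finite generation through the six-term (hexagonal) Tate-cohomology sequences by descending induction, and rely on the Dat--Bernstein stability of finite generation under subrepresentations. Your notation $Z_k = \ker(\partial_{k-1})$ is in fact a cleaner bookkeeping than the paper's Im/Ker pairing (which has off-by-one inconsistencies in its displayed sequences), and your explicit flagging of where \cite[Corollary 1.3]{dat_jams} enters, as well as of the need to verify $\Gamma$-equivariance of the coefficient system $\underline{V}$ (which the paper dispatches with a one-line remark about $\sigma$ acting simplicially on $\mathfrak{X}$), are both apt observations.
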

\begin{proof}
Note that the resolution
\eqref{ss_resolu} is $G$ and $\Gamma$
equivariant, and the Tate cohomology groups
$$\widehat{H}^i(\Gamma, C(\mathfrak{X}_{(i)}, \underline{V})),\ \,\, 
0\leq i\leq d,$$ are finitely generated representations of
$\overline{\mathbb{F}}_\ell[G^\sigma(F)]$ by Lemma
\ref{finite_gen_1}. For each $2\leq k\leq d$, consider the short exact sequence of
$\overline{\mathbb{F}}_\ell[G(F)\rtimes \Gamma]$ modules
$$ 0\rightarrow {\rm Im}(\partial_{k-1}) \rightarrow C(\mathfrak{X}_{(k-1)},
\underline{V}) \rightarrow {\rm Ker}(\partial_{k-2}) \rightarrow 0. $$
Then the corresponding long exact sequence of Tate cohomology groups gives
the following hexagonal exact sequence
$$
\xymatrixrowsep{0.15in} \xymatrixcolsep{0.15in} \xymatrix{
  \widehat{H}^0(\Gamma, {\rm Im}(\partial_{k-1})) \ar[rr]^{} &&
  \widehat{H}^0(\Gamma, C(\mathfrak{X}_{(k-1)},
  \underline{V})) \ar[rr] && \widehat{H}^0(\Gamma, {\rm Ker}(\partial_{k-2})) \ar[dd] \\\\
  \widehat{H}^1(\Gamma, {\rm Ker}(\partial_{k-2})) \ar[uu] &&
  \widehat{H}^1(\Gamma, C(\mathfrak{X}_{(k-1)}, \underline{V}))
  \ar[ll] && \widehat{H}^1(\Gamma, {\rm Im} (\partial_{k-1})) \ar[ll] }
$$
which we denote by $S(k)$. Note that
${\rm Im}(\partial_{d-1})$ is
identified with $C(\mathfrak{X}_{(d)},
\underline{V})$. Since both the Tate cohomology
$\widehat{H}^i(\Gamma, C(\mathfrak{X}_{(d)},
\underline{V}))$ and
$\widehat{H}^i(\Gamma, C(\mathfrak{X}_{(d-1)},
\underline{V}))$ are finitely
generated, it follows from the diagram 
$S(d)$ that $\widehat{H}^i(\Gamma, {\rm Ker}
(\partial_{d-2}))$ is also finitely
generated. A similar hexagonal 
diagram of Tate cohomology
corresponding to the following 
short exact sequence of $G(F)\rtimes\Gamma$ modules
$$ 0\rightarrow {\rm Ker}(\partial_{d-2})
\rightarrow C(\mathfrak{X}_{(d-1)}, \underline{V}) \rightarrow {\rm
  Im}(\partial_{d-2}) \rightarrow 0 $$ shows that
$\widehat{H}^i(\Gamma, {\rm Im} (\partial_{d-2}))$ is finitely
generated. Since
$\widehat{H}^i(\Gamma, C(\mathfrak{X}_{(d-2)}, \underline{V}))$ is
finitely generated, it follows from the hexagonal diagram $S(d-1)$
that $\widehat{H}^i(\Gamma, {\rm Ker}(\partial_{d-3}))$ is finitely
generated. Thus, following a finite number of inductive steps, the
hexagonal diagram $S(2)$ shows that the Tate cohomology
$\widehat{H}^i(\Gamma, {\rm Ker}(\partial_0))$ and
$\widehat{H}^i(\Gamma, {\rm Im}(\partial_0))$ are finitely
generated. Finally, it follows from the hexagonal diagram of Tate
cohomology groups corresponding to the following short exact sequence
of $\overline{\mathbb{F}}_\ell[G(F)\rtimes\Gamma]$ modules
$$ 0\rightarrow {\rm Im}(\partial_{0})
\rightarrow
C(\mathfrak{X}_{(0)},\underline{V})
\xrightarrow{\epsilon} V\rightarrow 0, $$
that the Tate cohomology
$\widehat{H}^i(\Gamma, V)$ is finitely
generated as a representation of
$\overline{\mathbb{F}}_\ell[G^\sigma(F)]$.
\end{proof}
\begin{theorem}\label{Tate_finite_gen}
Let $(\Pi, V)$ be a finite length
representation of
$\overline{\mathbb{F}}_\ell[G(F)\rtimes \Gamma]$. Then
each Tate cohomology group $\widehat{H}^i(\Gamma, V)$
is a finitely generated
representation of $\overline
{\mathbb{F}}_\ell[G^\sigma(F)]$.
\end{theorem}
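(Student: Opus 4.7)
The plan is to induct on the length of $V$ as a smooth $\overline{\mathbb{F}}_\ell[G(F) \rtimes \Gamma]$-module, treating the base case via Lemma \ref{finite_gen_2]} together with a Clifford-type dichotomy, and performing the inductive step through the hexagonal exact sequence of Tate cohomology combined with Dat's theorem on submodules of finitely generated representations.

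For the base case, suppose $V$ is irreducible as a $\overline{\mathbb{F}}_\ell[G(F) \rtimes \Gamma]$-module. A Clifford-type analysis yields two possibilities. Either $V$ is also irreducible as a $G(F)$-module, in which case it extends to $G(F) \rtimes \Gamma$ (tautologically) and Lemma \ref{finite_gen_2]} applies directly. Or $V|_{G(F)} \cong \bigoplus_{i=0}^{\ell-1}(\sigma^i)^* W$ for some irreducible smooth $G(F)$-representation $W$ whose $\Gamma$-conjugates are pairwise non-isomorphic, in which case $V \cong \operatorname{ind}_{G(F)}^{G(F) \rtimes \Gamma}(W)$. In this second situation $V$ is a free $\overline{\mathbb{F}}_\ell[\Gamma]$-module, so its Tate cohomology vanishes and finite generation is trivial.

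For the inductive step, assume the result holds for all $G(F) \rtimes \Gamma$-representations of length less than $n \ge 2$, and let $V$ have length $n$. Pick an irreducible $G(F) \rtimes \Gamma$-subrepresentation $V' \subset V$ and put $V'' := V/V'$; both have smaller length. By the inductive hypothesis, $\widehat{H}^i(\Gamma, V')$ and $\widehat{H}^i(\Gamma, V'')$ are finitely generated smooth $\overline{\mathbb{F}}_\ell[G^\sigma(F)]$-modules. The short exact sequence $0 \to V' \to V \to V'' \to 0$ produces the hexagonal exact sequence of Tate cohomology, from which we extract
\[
\widehat{H}^i(\Gamma, V') \xrightarrow{\alpha} \widehat{H}^i(\Gamma, V) \xrightarrow{\beta} \widehat{H}^i(\Gamma, V'').
\]
The image of $\alpha$ is a quotient of a finitely generated module, hence finitely generated; the image of $\beta$ is a smooth $G^\sigma(F)$-subrepresentation of the finitely generated module $\widehat{H}^i(\Gamma, V'')$, hence finitely generated by Dat's theorem \cite[Corollary 1.3]{dat_jams} (precisely the deep input already used in Lemma \ref{finite_gen_2]}). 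Therefore $\widehat{H}^i(\Gamma, V)$ is an extension of two finitely generated smooth $G^\sigma(F)$-modules, and so is itself finitely generated.

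The main obstacle is the Clifford-type dichotomy in the base case: one must verify that an irreducible smooth $\overline{\mathbb{F}}_\ell[G(F) \rtimes \Gamma]$-representation $V$ contains a simple $G(F)$-subrepresentation $W$, and that the sum $W + \sigma W + \cdots + \sigma^{\ell-1} W$ either collapses to $W$ or is direct and exhausts $V$. The existence of $W$ rests on admissibility of irreducible smooth modular representations of $G(F) \rtimes \Gamma$, and the directness claim exploits that $\ell$ is prime so that the only proper subgroup of $\Gamma$ is trivial. Once this dichotomy is in place, the rest of the argument is a routine diagram chase powered by Dat's finite generation theorem.
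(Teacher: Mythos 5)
Your argument is correct and uses the same toolkit as the paper --- Lemma~\ref{finite_gen_2]} for the irreducible-as-$G(F)$ case, vanishing of $\Gamma$-Tate cohomology on $\overline{\mathbb{F}}_\ell[\Gamma]$-free modules, and Dat's theorem to pass finite generation through the long exact sequence --- but the induction is organized differently. The paper inducts on the length of $V$ as an $\overline{\mathbb{F}}_\ell[G(F)]$-module, picking at each step an irreducible $G(F)$-subrepresentation $W$ and branching on whether $W$ is $\sigma$-stable as a subspace; you induct on the $G(F)\rtimes\Gamma$-length and concentrate the Clifford analysis entirely in the base case, so the inductive step is a uniform three-term exact-sequence argument. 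A small advantage of your organization is that when $V$ is $G(F)\rtimes\Gamma$-irreducible and $V|_{G(F)}$ is reducible, the $\sigma$-conjugates $W^{\sigma^i}$ are automatically pairwise non-isomorphic (the inertia group is $G(F)$, as $\ell$ is prime and $\overline{\mathbb{F}}_\ell[\Gamma]$ has no nontrivial simple modules), so $V|_{G(F)}\simeq\bigoplus_i W^{\sigma^i}$ is manifestly direct and $V$ visibly $\overline{\mathbb{F}}_\ell[\Gamma]$-free. In the paper's ``$W$ not $\Gamma$-stable'' case the chosen $W$ can a priori satisfy $W\simeq W^\sigma$ as abstract $G(F)$-modules while $\sigma W\neq W$ inside $V$, and then the internal sum $\sum_i\sigma^i W$ need not be direct; one repairs this by choosing $W$ to lie on a $\sigma$-fixed line in the $W$-isotypic socle, which exists because $\overline{\mathbb{F}}_\ell[\Gamma]$ is local, thereby reducing to the $\Gamma$-stable case. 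Your reorganization sidesteps that subtlety. Both proofs quietly use that a finite-length $\overline{\mathbb{F}}_\ell[G(F)\rtimes\Gamma]$-module has finite length, hence a nonzero socle, over $\overline{\mathbb{F}}_\ell[G(F)]$; you rightly flag this (via admissibility) as the one point requiring care.
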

\begin{proof}
We use induction on the length
of $(\Pi,V)$ as an $\overline{\mathbb{F}}_\ell[G(F)]$
representation. If
$(\Pi,V)$ is an irreducible
$G(F)$ representation, then
the theorem follows from Lemma
\ref{finite_gen_2]}. Let $W$
be an irreducible $G(F)$
subrepresentation of $V$. 
First consider the case when $W$ is $\Gamma$
stable. The long exact sequence of
Tate cohomology associated with the
following short exact sequence
$$ 0\longrightarrow W\longrightarrow V
\longrightarrow V/W\longrightarrow 0  $$
gives two exact sequences
$$ \widehat{H}^0(\Gamma, W)\longrightarrow
\widehat{H}^0(\Gamma, V)\longrightarrow
\widehat{H}^0(\Gamma, V/W) $$
and
$$ \widehat{H}^1(\Gamma, W)\longrightarrow
\widehat{H}^1(\Gamma, V)
\longrightarrow \widehat{H}^1(\Gamma, V/W). $$ 
Since $W$ is irreducible, each Tate cohomology
$\widehat{H}^i(\Gamma, W)$ is a finitely generated
representation of
$\overline{\mathbb{F}}_\ell[G^\sigma(F)]$ by Lemma
\ref{finite_gen_2]}. Also note that
the length of the $G(F)$ representation $V/W$ is strictly
less than the length of $V$, inductively we may assume that the
Tate cohomology groups $\widehat{H}^i(\Gamma, V/W)$
are finitely generated representations of $G^\sigma(F)$. Now,
it follows from the above exact
sequences that each $\widehat{H}^i(\Gamma, V)$
is a finitely generated representation of
$\overline{\mathbb{F}}_\ell[G^\sigma(F)]$.
Let us now consider the case where $W$ is not $\Gamma$ stable. Then
$$ W' = \bigoplus_{i=0}^{\ell-1} W^{\sigma^i} $$
is a $G(F)\rtimes\Gamma$
stable subspace of $V$, where
the action of $G(F)$ on the space
$W^{\sigma^i}$ is twisted by
$\sigma^i$ for  $0\leq i<\ell$. Note that 
$\widehat{H}^i(\Gamma, W')$ is
trivial, for $i\in \{0,1\}$. Now, the long exact
sequence of Tate cohomology groups applied 
to the following short exact sequence of
$\overline{\mathbb{F}}_\ell[G(F)\rtimes\Gamma]$ modules
$$ 0\longrightarrow W'\longrightarrow
V \longrightarrow V/W'\longrightarrow 0 $$ 
gives the following isomorphism of $G^\sigma(F)$ representations
$$ \widehat{H}^i(\Gamma, V)\rightarrow
\widehat{H}^i(\Gamma, V/W'),\ i\in \{0,1\}. $$
Since the length of $V/W'$ is
strictly less than that of $V$, the $G^\sigma(F)$
representation $\widehat{H}^i(\Gamma, V/W')$
is finitely generated by induction. Thus, we get that the Tate cohomology group
$\widehat{H}^i(\Gamma, V)$ is a finitely generated representation of 
$G^\sigma(F)$.
\end{proof}

\section{Finiteness of Tate cohomology, the Galois automorphisms
  case}\label{sec_jac_mod}
In this section, we prove some results on the compatibility of Jacquet
modules and twisted Jacquet modules with Tate cohomology groups. Some of 
these results were proved in \cite[Section 6]
{nadimpalli2024tate} for the group $G={\rm Res}_{E/F}{\rm GL}_n$,
where $E/F$ is a Galois extension of prime degree $\ell$, and $\sigma$
is an automorphism induced by a generator of the Galois group
${\rm Gal}(E/F)$. The results in this section, although
proved for groups defined over non-Archimedean local fields,
are valid for finite
groups of Lie type. For convenience, we will drop $\Gamma$ in the Tate cohomology
groups $\widehat{H}^i(\Gamma, M)$ and simply denote these cohomology spaces by
$\widehat{H}^i(M)$. 
\subsection{Twisted Jacquet modules}
Let $(\pi,V)$ be a smooth $\Lambda[G(F)]$ module, where $\Lambda$ is
the ring of integers of the maximal unramified extension of
$\mathbb{Q}_\ell$ in $\overline{\mathbb{Q}}_\ell$.  Let $P=MN$ be an
$F$-rational parabolic subgroup of $G$, where $M$ is an $F$-rational
Levi subgroup of $P$ and 
$N$ is the unipotent radical of $P$. Let
$\Theta:N(F)\rightarrow \Lambda^\ast$ be a character. Let
$V(N(F),\Theta)$ be the $\Lambda$-module generated by the following
set of vectors
$$ \big\{\pi(n)v-\Theta(n)v:v\in V,\,
n\in N(F)\big\}. $$ The quotient space $V/V(N(F),\Theta)$ is called a
twisted Jacquet module of $V$, which is denoted by $V_{N(F),\Theta}$.
This space is characterised by the set of elements $v\in V$ for which
there exists some compact open subgroup $\mathcal{N}_v$ of  $N(F)$ such
that
$$ \int_{\mathcal{N}_v} \Theta^{-1}(n)\,
\pi(n)\,dn = 0, $$ where $dn$ is a Haar measure on $N(F)$.  
The space $V_{N(F),\Theta}$ is a smooth
representation of the $M(F)$ stabiliser of 
the character $\Theta$, to be denoted by $M(F)_\Theta$.  In particular,
when $\Theta=1$, the trivial character of $N(F)$, then the smooth
$\Lambda[M(F)]$ module $V_{N(F),1}$ is called the Jacquet module of
$V$. We also use the notation $r_{N(F)}(V)$ for $V_{N(F),1}$.
\subsection{Integral representation and mod-\texorpdfstring{$\ell$}{}
reduction}
A smooth $\ell$-adic representation $(\pi,V)$ of $G(F)$, i.e., $V$ is
a vector space over $\overline{\mathbb{Q}}_\ell$, is called integral
if $V$ is a finite length $G(F)$ representation and if there exists a
$G(F)$ stable $\overline{\mathbb{Z}}_\ell$-lattice $\mathcal{L}$ in
$V$. Suppose $(\pi,V)$ is defined over $\mathcal{K}$, i.e., there
exists a $G(F)$ stable $\mathcal{K}$-vector subspace $V_0$ in $V$ such
that $V_0\otimes_{\mathcal{K}} \overline{\mathbb{Q}}_\ell = V$, then
there exists a $\Lambda[G(F)]$ stable lattice $\mathcal{L}_0$ in $V_0$
and $\mathcal{L}_0 \otimes_{\Lambda}\overline{\mathbb{Z}}_\ell$ is a
$G(F)$ stable $\overline {\mathbb{Z}}_\ell$-lattice in $V$.  If $P$ is
an $F$-rational parabolic subgroup of $G$ with an $F$-rational Levi
subgroup $M$ and unipotent radical $N$, then it follows from
\cite[Proposition 1.4]{Dat_tempered} that $\mathcal{L}_{N(F)}$ is a
$M(F)$ stable $\overline {\mathbb{Z}}_\ell$-lattice in $V_{N(F)}$.
The natural action of $G(F)$ on the
$\overline{\mathbb{F}}_\ell$-vector space
$\mathcal{L}\otimes_{\overline{\mathbb{Z}}_\ell}
\overline{\mathbb{F}}_\ell$ gives an $\ell$-modular representation of
$G(F)$, which depends on the choice of the lattice
$\mathcal{L}$. Using \cite[Chapter II, 5.11.a and
5.11.b]{Vigneras_modl_book}, the representation
$(\pi,\mathcal{L}\otimes_ {\overline{\mathbb{Z}}_\ell}
\overline{\mathbb{F}}_\ell)$ has finite length and its
semi-simplification is independent of the choice of the $G(F)$ stable
$\overline{\mathbb{Z}}_\ell$-lattice $\mathcal{L}$ in $V$. The
semi-simplification of
$(\pi, \mathcal{L}\otimes_{\overline{\mathbb{Z}}_\ell}
\overline{\mathbb{F}}_\ell)$, denoted by $r_\ell(\pi)$, is called the
mod-$\ell$ reduction of $\pi$.
\subsection{}
Let $\sigma\in {\rm Aut}(G)$, defined over $F$, be an element of order
$\ell$, where $\ell$ is a prime different from $p$. Let $\Gamma$ be
the group generated by $\sigma$. Let $P$ be a $\sigma$-stable
$F$-parabolic subgroup of $G$, and let $N$ be the unipotent radical of
$P$. Note that $N$ is $\sigma$-stable and let $M$ be a $\sigma$-stable
Levi subgroup $P$ (for the existence of $M$, see \cite[Lemma
12.5.4]{Prasad_Kaletha_Bruhat_Tits} for a reference).  Given any
$F$-parabolic subgroup $Q$ of $G^\sigma$, there exists a
$\sigma$-stable $F$-parabolic subgroup $P$ of $G$ such that
$Q=P^\sigma$ (see \cite[Lemma 12.5.4]{Prasad_Kaletha_Bruhat_Tits} for
a reference). Note that $U=N^\sigma$ is the unipotent radical of $Q$
and $M^\sigma$ is a Levi subgroup of $Q$.  Let
$\Theta:N(F)\rightarrow \Lambda^\ast$ be a character such that
$\Theta^\sigma = \Theta$.
\begin{lemma}\label{Tate_image}
Let $(\pi,\mathcal{L})$ be a smooth
$\Lambda[G(F)\rtimes \Gamma]$ module.
For each $i\in\{0,1\}$, the image of the natural map
$\widehat{H}^i(\mathcal{L}(N(F), \Theta))\rightarrow
\widehat{H}^i(\mathcal{L})$ is equal to
 $\widehat{H}^i(\mathcal{L}) (N^\sigma(F),\overline{\Theta})$, where
 $\overline{\Theta}$ is the the mod-$\ell$ reduction of $\Theta$.
\end{lemma}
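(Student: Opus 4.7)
The plan is to prove the stated equality by two separate inclusions. Write $I_i$ for the image of the natural map $\widehat{H}^i(\mathcal{L}(N(F), \Theta)) \to \widehat{H}^i(\mathcal{L})$ and $J_i := \widehat{H}^i(\mathcal{L})(N^\sigma(F), \overline{\Theta})$. For $J_i \subseteq I_i$, given a Tate cocycle $v \in \mathcal{L}$ representing a class $[v] \in \widehat{H}^i(\mathcal{L})$ and any $n \in N^\sigma(F)$, the element $\pi(n)v - \Theta(n)v$ lies in $\mathcal{L}(N(F), \Theta)$ and is still a Tate cocycle (since $\sigma(n) = n$ and $\Theta(n)$ is a scalar), mapping to $\pi(n)[v] - \overline{\Theta}(n)[v]$ in $\widehat{H}^i(\mathcal{L})$; such elements generate $J_i$.

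For the reverse inclusion $I_i \subseteq J_i$, take a Tate cocycle $y \in \mathcal{L}(N(F), \Theta)$. The smooth integral characterization of the twisted Jacquet module gives compact open subgroups $\mathcal{N}_0 \subseteq \mathcal{N}$ of $N(F)$ with $\Theta|_{\mathcal{N}_0} = 1$, $\pi(\mathcal{N}_0)y = y$, and $\sum_{n \in \mathcal{N}/\mathcal{N}_0} \Theta^{-1}(n)\pi(n)y = 0$. By passing to intersections of Galois translates one makes $\mathcal{N}$ and $\mathcal{N}_0$ $\sigma$-stable; taking $\mathcal{N}_0$ pro-$p$ ensures (using $\ell \ne p$) that each $\sigma$-fixed coset in $\mathcal{N}/\mathcal{N}_0$ has a $\sigma$-fixed representative. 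Decomposing the index set by $\sigma$-orbits (of sizes $1$ and $\ell$) rewrites the vanishing sum as
\[
\sum_{n \in \mathcal{N}^\sigma/\mathcal{N}_0^\sigma} \Theta^{-1}(n)\pi(n)y \;+\; \sum_{[n_0]\text{ size-}\ell\text{ orbit}} \Theta^{-1}(n_0)\, Q_{n_0}(y) \;=\; 0, \qquad Q_{n_0}(y) := \sum_{i=0}^{\ell-1} \pi(\sigma^i(n_0))\, y.
\]
If the second sum is a Tate coboundary in $\mathcal{L}$, then so is the first; its image in $\widehat{H}^i(\mathcal{L})$ is the twisted Jacquet integral of $[y]$ with respect to $(N^\sigma(F), \overline{\Theta})$, and its vanishing forces $[y] \in J_i$ by the integral characterization.

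It thus suffices to show each $Q_{n_0}(y)$ is a Tate coboundary of $\mathcal{L}$. For $i = 0$, the cocycle condition $\sigma y = y$ gives $Q_{n_0}(y) = \sum_i \sigma^i(\pi(n_0)y) = \operatorname{Nr}(\pi(n_0)y) \in \operatorname{Img}(\operatorname{Nr})$. For $i = 1$, the cocycle condition is $\operatorname{Nr}(y) = 0$, and the key step is the explicit identity
\[
(\operatorname{id} - \sigma)(z_{n_0}) = Q_{n_0}(y), \qquad z_{n_0} := \sum_{i=1}^{\ell-1} \pi(\sigma^i(n_0)) \sum_{j=0}^{i-1} \sigma^j y,
\]
verified by the joint shift $i \mapsto i+1$, $j \mapsto j+1$ in $\sigma z_{n_0}$: all interior terms cancel, and the wrap-around at $i = \ell$, using $\sigma^\ell(n_0) = n_0$ together with $\sum_{j=1}^{\ell-1}\sigma^j y = -y$ from $\operatorname{Nr}(y) = 0$, contributes $-\pi(n_0)y$; this combines with the surviving ``$j = 0$'' terms of $z_{n_0}$ to reassemble $Q_{n_0}(y)$. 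Hence $Q_{n_0}(y) \in \operatorname{Img}(\operatorname{id} - \sigma)$.

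The principal technical obstacle is the $i = 1$ case and the discovery of the explicit coboundary $z_{n_0}$; by comparison, the $\sigma$-stabilization of the compact open subgroups and the $i = 0$ orbit computation are routine.
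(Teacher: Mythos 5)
Your proof is correct, and it improves on the paper's argument in two places. For $J_i\subseteq I_i$ you lift generators directly: each $\pi(n)[v]-\overline{\Theta}(n)[v]$ with $n\in N^\sigma(F)$ visibly comes from the Tate cocycle $\pi(n)v-\Theta(n)v\in\mathcal{L}(N(F),\Theta)$. The paper instead builds a specific $\sigma$-invariant preimage $\tilde{w}_1$ by averaging a lift of $w$ over $\mathcal{N}_\sigma/\mathcal{N}_\sigma'$; both work, yours is shorter. For $I_i\subseteq J_i$ you follow the same reduction (integral characterization, $\sigma$-stabilization, orbit decomposition), but where it matters most---the case $i=1$---your argument is the sound one. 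The paper asserts the identity
\[
\sum_{x\in X}\Theta^{-1}(x)\pi(x)\tilde{v}
= \sum_{x\in\mathcal{V}}\Theta^{-1}(x)\pi(x)\operatorname{Nr}(\tilde{v})
+ \sum_{x\in X^\sigma}\Theta^{-1}(x)\pi(x)\tilde{v},
\]
which does not hold as written: expanding the free orbits yields $\sum_{x\in\mathcal{V}}\Theta^{-1}(x)\sum_i\pi(\sigma^i x)\tilde{v}$, and since $\tilde{v}$ is not $\sigma$-fixed when $i=1$ one cannot replace $\pi(\sigma^i x)\tilde{v}$ by $\sigma^i(\pi(x)\tilde{v})$ to produce a norm. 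Your explicit cobounding element $z_{n_0}=\sum_{i=1}^{\ell-1}\pi(\sigma^i n_0)\sum_{j=0}^{i-1}\sigma^j y$, together with the verification that $(\operatorname{id}-\sigma)z_{n_0}=Q_{n_0}(y)$ using $\sigma^\ell n_0=n_0$ and $\operatorname{Nr}(y)=0$, supplies the missing reasoning and repairs this step. You also make explicit, via $H^1(\Gamma,\mathcal{N}_0)=0$ for the pro-$p$ group $\mathcal{N}_0$ and $\ell\neq p$, that $\sigma$-fixed cosets of $\mathcal{N}/\mathcal{N}_0$ admit $\sigma$-fixed representatives---a point the paper uses tacitly when identifying the sum over $X^\sigma$ with a twisted Jacquet integral over a compact open subgroup of $N^\sigma(F)$.
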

\begin{proof}
For $i\in\{0,1\}$, let $\varphi_i$ be the natural map
$$ \widehat{H}^i(\mathcal{L}(N(F),
\Theta))\longrightarrow \widehat{H}^i(\mathcal{L}). $$ We first
consider the case $i=0$. Let $v$ be in the image of $\varphi_0$, and
let $\tilde{v}$ be a lift of $v$ in $\mathcal{L}(N(F),\Theta)^\sigma$.
There exists a compact open subgroup $\mathcal{N}\subseteq N(F)$ such
that
\begin{equation}\label{lift_sum_1}
\int_{\mathcal{N}}\Theta^{-1}(n)\,\pi(n)\tilde{v}\,dn = 0.
\end{equation}
Since $(\pi,\mathcal{L})$ is smooth, there exists a compact open
subgroup $\mathcal{N}'\subseteq \mathcal{N}$ of finite index such that
$\mathcal{N'}$ fixes the vector $\tilde{v}$ and $\Theta$ is trivial on
$\mathcal{N}'$. Then
$$ \int_{\mathcal{N}}\Theta^{-1}(n)\,
\pi(n)\tilde{v}\,dn = \sum_{n\in \mathcal{N}/\mathcal{N}'}
\Theta^{-1}(n)\,\pi(n)\tilde{v}\,dn. $$ Since $N(F)$ admits a
filtration of $\Gamma$ stable compact open subgroups, we
assume that both $\mathcal{N}$ and $\mathcal{N}'$ are $\Gamma$-stable. If $X$
denotes the coset space $\mathcal{N} /\mathcal{N}'$, then we have
\begin{equation}\label{sum_lift_2}
\sum_{x\in X} \Theta^{-1}(x)\,\pi(x)\tilde{v} =
\sum_{y\in X^\sigma} \Theta^{-1}(y)\,\pi(y)\tilde{v}\ +
\sum_{z\in {X\setminus X^\sigma}} \Theta^{-1}(z)\,\pi(z)\tilde{v}.
\end{equation}
Since the action of $\Gamma$ on $X\setminus X^\sigma$ is
free, there exists a subset $\mathcal{U}\subseteq X\setminus X^\sigma$
such that $X\setminus X^\sigma$ is the disjoint union of
$\sigma^i\mathcal{U}$ for $1\leq i\leq\ell$. As $\tilde{v}$ is a
$\sigma$-fixed vector, we get
$$ \sum_{z\in {X\setminus X^\sigma}} \Theta^{-1}(z)\,\pi(z)\tilde{v} =
\sum_{i=1}^\ell\sum_{u\in \mathcal{U}}\Theta^{-1}(\sigma^iu)\,
\pi(\sigma^iu)\tilde{v} = {\rm Nr}\big(\sum_{u\in \mathcal{U}}
\Theta^{-1}(u)\,\pi(u)\tilde{v}\big), $$ where
${\rm Nr} = 1+\sigma+\cdots +\sigma^{\ell-1}\in \Lambda[\Gamma]$. This
shows that
$$ \sum_{z\in {X\setminus X^\sigma}} 
\overline{\Theta}^{-1}(z)\,\pi(z)v = 0 $$ 
in $\widehat{H}^0(\mathcal{L})$. Then it follows from
(\ref{lift_sum_1}) and (\ref{sum_lift_2}) that
$$ \sum_{y\in X^\sigma} 
\overline{\Theta}^{-1}(y)\,\pi(y)v = 0. $$ This implies that the vector $v$
belongs to the Tate cohomology space
$\widehat{H}^0(\mathcal{L})(N^\sigma(F),\overline{\Theta})$.
Conversely, let $w$ be an element of
$\widehat{H}^0(\mathcal{L})(N^\sigma(F),\overline{\Theta})$.  Let
$\mathcal{N}_{\sigma}$ be the compact open subgroup of $N^\sigma(F)$
with
$$ \int_{\mathcal{N}_{\sigma}}\overline{\Theta}^{-1}(n)\,
\pi(n)w\,dn = 0. $$ Since $\pi$ is smooth, there exists a compact open
subgroup $\mathcal{N}_{\sigma}'\subseteq \mathcal{N}_{\sigma}$ of
finite index such that
$$ \int_{\mathcal{N}_{\sigma}}
\overline{\Theta}^{-1}(n)\,\pi(n)w\,dn = \sum_{n\in
  \mathcal{N}_{\sigma}/\mathcal{N}_{\sigma}'}
\overline{\Theta}^{-1}(n)\,\pi(n)w. $$ Let $\tilde{w}$ be a lift of
$w$ in $\mathcal{L}^\sigma$. Consider the element
$$ \tilde{w}_1 = \tilde{w} -
\frac{1}{\lvert\mathcal{N}_{\sigma}
/\mathcal{N}_{\sigma}'\rvert} \sum_{n\in \mathcal{N}_{\sigma}/\mathcal{N}_{\sigma}'}
\Theta^{-1}(n)\,\pi(n)\tilde{w}. $$
Then $\tilde{w}_1$ belongs to $\mathcal{L}(N(F),\Theta)^\sigma$ 
and $\varphi_0(\tilde{w}_1) = w$. 
%Moreover, we have
%$$ \int_{\mathcal{N}_{\sigma}}
%\Theta^{-1}(n)\,\pi(n)\tilde{w}_1\,dn = \sum_{n\in
%\mathcal{N}_{\sigma}/\mathcal{N}_{\sigma}'}
%\Theta^{-1}(n)\,\pi(n)\tilde{w}_1 = 0. $$ 
Thus, we get the lemma for
$i=0$.

Now, consider the case $i=1$. Let $v$ be an element in the image of
$\varphi_1$.  Let $\tilde{v}$ be a lift of $v$ in
${\rm Ker}({\rm Nr}_{|\mathcal{L}(N(F),\Theta)})$. Then there exists
$\Gamma$ stable compact open subgroups
$\mathcal{N}'\subset \mathcal{N}$ such that $\mathcal{N}'$ fixes the
vector $\tilde{v}$, the character $\Theta$ is trivial on $\mathcal{N}'$, and
$$ \sum_{x\in X}\Theta^{-1}(x)\,\pi(x)\tilde{v} = 0, $$ 
where $X$ denotes the quotient space $\mathcal{N}/\mathcal{N}'$. Since
the $\Gamma$ action on $X\setminus X^\sigma$ is free,
there exists a subset $\mathcal{V}\subseteq X\setminus X^\sigma$ such
that $X\setminus X^\sigma
= \coprod_{i=0}^{\ell-1}\sigma^i\mathcal{V}$.
Then we have the identity
$$ \sum_{x\in X}\Theta^{-1}(X)\,\pi(x)\tilde{v}
= \sum_{x\in\mathcal{V}}\Theta^{-1}(x)\,\pi(x){\rm Nr}(\tilde{v})+
\sum_{x\in X^\sigma}\Theta^{-1}(x)\,\pi(x)\tilde{v}. $$ Thus, we get
that $\sum_{x\in X^\sigma}\overline{\Theta}^{-1}(x)\,\pi(x)v$ is
trivial in $\widehat{H}^1(\mathcal{L})$, and this implies that
$v\in \widehat{H}^1(\mathcal{L})
(N^\sigma(F),\overline{\Theta})$. Conversely, let $v$ be an element in
$\widehat{H}^1(\mathcal{L}) (N^\sigma(F),\overline{\Theta})$. Let
$\tilde{v}\in {\rm Ker}({\rm Nr})$ be a lift of $v$. Consider the
element
$$ \tilde{v}_1=\tilde{v}-\dfrac{1}
{|\mathcal{N}_\sigma/\mathcal{N}_\sigma'|} \sum_{n\in
  \mathcal{N}_\sigma/\mathcal{N}_\sigma'}
\Theta^{-1}(n)\,\pi(n)\tilde{v}. $$ Note that
$\tilde{v}_1\in \mathcal{L}(N(F),\Theta)$ and
${\rm Nr}(\tilde{v}_1)=0$ as ${\rm Nr}$ commutes with the action of
$N^\sigma(F)$.  This completes the proof.
\end{proof}
Using the above lemma, we get the following crucial result on Jacquet
modules of the Tate cohomology groups.
\begin{theorem}\label{Jacquet_tate_injective}
Let $P$ be a $\sigma$ stable $F$-parabolic subgroup of
$G$, and let $N$ be the unipotent radical of $P$. Let $\mathcal{L}$
be a smooth $\Lambda[G(F)\rtimes \Gamma]$
module. Then the natural map 
$\widehat{H}^i(\mathcal{L})_{N^\sigma(F),
\overline{\Theta}} \rightarrow
\widehat{H}^i(\mathcal{L}_{N(F),\Theta})$ is injective.  
\end{theorem}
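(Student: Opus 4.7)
The plan is to read off the injectivity as a formal consequence of Lemma \ref{Tate_image} together with the long exact sequence of Tate cohomology applied to the natural short exact sequence defining the twisted Jacquet module.

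First I would set up the short exact sequence
$$0 \longrightarrow \mathcal{L}(N(F), \Theta) \longrightarrow \mathcal{L} \longrightarrow \mathcal{L}_{N(F), \Theta} \longrightarrow 0.$$
Since $N$ is $\sigma$-stable and $\Theta^\sigma = \Theta$, the submodule $\mathcal{L}(N(F), \Theta)$ is $\Gamma$-stable, so this is a short exact sequence of $\Lambda[\Gamma]$-modules. Because $\Gamma$ is finite, Tate cohomology yields a long exact sequence
$$\cdots \longrightarrow \widehat{H}^i(\mathcal{L}(N(F), \Theta)) \xrightarrow{\varphi_i} \widehat{H}^i(\mathcal{L}) \xrightarrow{\psi_i} \widehat{H}^i(\mathcal{L}_{N(F), \Theta}) \longrightarrow \widehat{H}^{i+1}(\mathcal{L}(N(F),\Theta)) \longrightarrow \cdots.$$

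Next I would use exactness at $\widehat{H}^i(\mathcal{L})$: the kernel of $\psi_i$ equals the image of $\varphi_i$. By Lemma \ref{Tate_image}, this image is precisely $\widehat{H}^i(\mathcal{L})(N^\sigma(F), \overline{\Theta})$. On the target side, since $N(F)$ acts on $\mathcal{L}_{N(F), \Theta}$ via the character $\Theta$, the subgroup $N^\sigma(F)$ acts on $\widehat{H}^i(\mathcal{L}_{N(F),\Theta})$ via $\overline{\Theta}$, so $\psi_i$ automatically annihilates $\widehat{H}^i(\mathcal{L})(N^\sigma(F), \overline{\Theta})$ and descends to the quotient. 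The induced map
$$\widehat{H}^i(\mathcal{L})_{N^\sigma(F),\overline{\Theta}} \;=\; \widehat{H}^i(\mathcal{L}) \big/ \widehat{H}^i(\mathcal{L})(N^\sigma(F), \overline{\Theta}) \;\longrightarrow\; \widehat{H}^i(\mathcal{L}_{N(F),\Theta})$$
then has trivial kernel, which is the required injectivity. One only needs to observe that this induced map coincides with the natural map in the statement, which is clear since both arise from the quotient map $\mathcal{L} \to \mathcal{L}_{N(F),\Theta}$ by passing to Tate cohomology and then to $(N^\sigma(F), \overline{\Theta})$-coinvariants.

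The argument is purely formal once Lemma \ref{Tate_image} is in hand; the substantive content is the image computation in that lemma, so there is no further obstacle. The same proof works uniformly in $i\in\{0,1\}$ by the $2$-periodicity of Tate cohomology, and it is equally valid for finite groups of Lie type, since nothing in the argument uses features specific to the non-Archimedean local field setting beyond the $\sigma$-stability of $N$ and $\Theta$.
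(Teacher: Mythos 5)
Your proof is correct and is essentially the same argument as the paper's: both apply the long exact sequence of Tate cohomology to the short exact sequence $0\to\mathcal{L}(N(F),\Theta)\to\mathcal{L}\to\mathcal{L}_{N(F),\Theta}\to0$ and invoke Lemma \ref{Tate_image} to identify the image of the connecting map with $\widehat{H}^i(\mathcal{L})(N^\sigma(F),\overline{\Theta})$. The only difference is that you spell out the (redundant but harmless) observation that the map to $\widehat{H}^i(\mathcal{L}_{N(F),\Theta})$ descends to the coinvariants because the target is $\overline{\Theta}$-isotypic for $N^\sigma(F)$; the paper leaves this as an immediate consequence of the exactness plus the image computation.
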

\begin{proof}
Consider the short exact sequence of $\Lambda[\Gamma]$ modules
$$ 0\longrightarrow \mathcal{L}(N(F),\Theta)
\longrightarrow \mathcal{L}\longrightarrow
\mathcal{L}_{N(F),\Theta}\longrightarrow 0. $$ The long exact sequence
of Tate cohomology corresponding to the above short exact sequence gives
the following eact sequence:
$$ \widehat{H}^0(\mathcal{L}(N(F),\Theta))\xrightarrow
{\varphi_0}\widehat{H}^0(\mathcal{L})\rightarrow
\widehat{H}^0(\mathcal{L}_{N(F),\Theta})\rightarrow
\widehat{H}^1(\mathcal{L}(N(F),\Theta))\xrightarrow
{\varphi_1}\widehat{H}^1(\mathcal{L}) \rightarrow\widehat{H}^1
(\mathcal{L}_{N(F),\Theta}). $$ Using Lemma \ref{Tate_image}, the
image of $\varphi_i$ is equal to
$\widehat{H}^i(\mathcal{L})(N^\sigma(F),\overline{\Theta})$. Hence, we
get the theorem.
\end{proof}
As a corollary, we get the following result.
\begin{corollary}\label{Tate_cuspidal}
  Let $G$ be a connected reductive algebraic group defined over $F$,
  and let $\sigma$ be an order $\ell$ automorphism of $G$, where
  $\ell$ and $p$ are distinct primes. Let $\Pi$ be an $\ell$-modular
  cuspidal representation of $G(F)$ such that $\Pi$ extends to a
  representation of $G(F)\rtimes \Gamma$. Then each Tate cohomology
  group $\widehat{H}^i(\Pi)$ is a cuspidal representation of
  $G^\sigma(F)$.
\end{corollary}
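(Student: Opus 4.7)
The plan is to derive cuspidality of $\widehat{H}^i(\Pi)$ directly from the injectivity of the Jacquet-module-Tate-cohomology comparison map provided by Theorem \ref{Jacquet_tate_injective}, specialised to the trivial character $\Theta=1$. Concretely, I want to show that for every proper $F$-rational parabolic subgroup $Q$ of $G^\sigma$ with unipotent radical $U$, the Jacquet module $r_{U(F)}(\widehat{H}^i(\Pi))=\widehat{H}^i(\Pi)_{U(F),1}$ vanishes.

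The first step is to pass from the parabolic $Q$ on the fixed-point group to a parabolic $P$ on the ambient group. Using the fact recalled just before Lemma \ref{Tate_image} (citing \cite[Lemma 12.5.4]{Prasad_Kaletha_Bruhat_Tits}), every $F$-rational parabolic $Q$ of $G^\sigma$ arises as $Q=P^\sigma$ for some $\sigma$-stable $F$-rational parabolic $P$ of $G$, and the unipotent radical of $Q$ is precisely $U=N^\sigma$, where $N$ is the unipotent radical of $P$. I also need $P$ to be proper in $G$ when $Q$ is proper in $G^\sigma$; but if $P=G$ then $P^\sigma=G^\sigma=Q$, contradicting properness, so this is automatic.

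Next I apply Theorem \ref{Jacquet_tate_injective} with $\Theta=1$, which yields the injection
$$\widehat{H}^i(\Pi)_{N^\sigma(F),1}\hookrightarrow \widehat{H}^i(\Pi_{N(F),1}).$$
Since $\Pi$ is cuspidal and $P$ is a proper $F$-parabolic of $G$, the Jacquet module $\Pi_{N(F),1}=r_{N(F)}(\Pi)$ vanishes by definition of cuspidality (item (4) of the Notations). Therefore the right-hand side is $\widehat{H}^i(0)=0$, and hence $\widehat{H}^i(\Pi)_{U(F),1}=\widehat{H}^i(\Pi)_{N^\sigma(F),1}=0$.

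Since $Q$ was arbitrary among proper $F$-rational parabolic subgroups of $G^\sigma$, we conclude that every proper Jacquet module of $\widehat{H}^i(\Pi)$ vanishes, and hence $\widehat{H}^i(\Pi)$ is cuspidal as a representation of $G^\sigma(F)$. There is essentially no genuine obstacle here; the only thing to be a little careful about is the bookkeeping that proper parabolics of $G^\sigma$ lift to proper $\sigma$-stable parabolics of $G$, which is straightforward. The whole corollary is really a formal consequence of Theorem \ref{Jacquet_tate_injective} combined with the definition of cuspidality.
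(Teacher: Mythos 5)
Your proposal is correct and follows essentially the same argument as the paper's proof: lift a proper $F$-parabolic of $G^\sigma$ to a proper $\sigma$-stable $F$-parabolic of $G$ via \cite[Lemma 12.5.4]{Prasad_Kaletha_Bruhat_Tits}, apply Theorem \ref{Jacquet_tate_injective} with $\Theta = 1$, and use cuspidality of $\Pi$ to force the target to vanish. The only addition is your explicit justification that $P$ is proper, which the paper leaves implicit.
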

\begin{proof}
  Let $Q$ be a proper $F$-parabolic subgroup of $G^\sigma$. By
  \cite[Lemma 12.5.4]{Prasad_Kaletha_Bruhat_Tits}, there exists a
  proper $\sigma$ stable $F$-parabolic subgroup $P$ of $G$ such that
  $P^\sigma = Q$. Let $N$ be the unipotent radical of $P$. Then
  $N^\sigma$ is equal to the unipotent radical of $Q$, to be denoted
  by $U$. Now, using Theorem \ref{Jacquet_tate_injective} for
  $\Theta=1$, the trivial character of $N(F)$, we get an injection
$$ r_{U(F)}(\widehat{H}^i(\Pi)) \hookrightarrow \widehat{H}^i(r_{N(F)}(\Pi)). $$
Since $\Pi$ is cuspidal, the Jacquet module $r_{N(F)}(\Pi)$ is trivial,
and so is $r_{U(F)}(\widehat{H}^i(\Pi))$. Thus, the Tate cohomology group
$\widehat{H}^i(\Pi)$ is cuspidal.
\end{proof}
\subsection{}
In the paper \cite[Conjecture 6.1]{Treumann_Venkatesh_linkage}, the
authors conjectured that if $\Pi$ is an irreducible $\ell$-modular
representation of $G(F)$ such that $\Pi$ extends to a representation
of $G(F)\rtimes \Gamma$, then the Tate cohomology groups
$\widehat{H}^i(\Pi)$, $i\in\{0,1\}$, are finite length representations
of $G^\sigma(F)$. In our previous work \cite[Proposition
5.5]{nadimpalli2024tate}, this conjecture is proved for $G={\rm GL}_n$
where $\sigma$ is induced by a Galois automorphism. In this
subsection, we generalise this result for arbitrary connected
reductive groups.
\subsubsection{}\label{local_bc_setup}
Let $E/F$ be a finite Galois extension with $[E:F]=\ell$, where $\ell$ and
$p$ are distinct primes. We fix a generator $\sigma$ of the cyclic
group ${\rm Gal}(E/F)$. Let $G$ be a connected reductive group defined
over $F$. Fix a minimal $F$-parabolic subgroup $P_0$ of $G$. Let
$\Omega$ denote the set of 
proper $F$-parabolic subgroups of
$G$ containing $P_0$. For $P\in \Omega$, we denote by $U$ the 
unipotent radical of $P$. 
Let $\pi$ be a smooth $R$-representation of
$G(F)$, where the coefficient field $R$ is either
$\overline{\mathbb{Q}}_\ell$ or $\overline{\mathbb{F}}_\ell$. We have the
natural map obtained from Frobenius reciprocity:
\begin{equation}\label{map}
\pi \longrightarrow \prod_{P\in \Omega}i_{P(F)}^{G(F)}\circ r_{U(F)}(\pi),
\end{equation}
and let $\pi_c$ be the kernel of the map (\ref{map}). If the Jacquet
module $r_{U(F)}(\pi_c)$ is non-zero for some $P\in \Omega$,
then Frobenius reciprocity gives the following non-zero map
\begin{equation}\label{ker_cusp_cusp}
\pi_c \longrightarrow \prod_{P\in \Omega} i_{P(F)}^{G(F)}\circ r_{U(F)}(\pi_c).
\end{equation}
Since the functor $i_{P(F)}^{G(F)}$ preserves
injective maps and $r_{U(F)}$ is an exact functor,  we get a
contradiction to the fact that $\pi_c$ is the
kernel of the map (\ref{map}). Thus, $\pi_c$ is a cuspidal
representation. Using this observation, we now prove the following
finiteness result.
\begin{theorem}\label{Tate_finite_length}
Let $E/F$ be a finite Galois extension of prime degree $\ell$, where
$\ell\ne p$. Let $G$ be a connected reductive group defined
over $F$. Let $\Pi$ be a finite length 
 $\ell$-modular representation of $[G(E)\rtimes {\rm Gal}(E/F)]$. Then, the Tate
cohomology groups $\widehat{H}^i({\rm Gal}(E/F), \Pi)$ are
finite length $\ell$-modular 
representations of $G(F)$.
\end{theorem}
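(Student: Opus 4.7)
The plan is to argue by induction on $\dim G$, combining three technical tools already established in the preceding sections: the finite generation of Tate cohomology (Theorem \ref{Tate_finite_gen}), the injection of Jacquet modules of Tate cohomology into Tate cohomology of Jacquet modules (Theorem \ref{Jacquet_tate_injective}), and the cuspidal kernel construction of \S\ref{local_bc_setup}. The base case is when $G$ is $F$-anisotropic, so there are no proper $F$-parabolics; every smooth $\overline{\mathbb{F}}_\ell$-representation of the compact-mod-center group $G(F)$ is then tautologically cuspidal, and the finite-length statement reduces to the central-character argument described at the end.

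For the inductive step, fix $i\in\{0,1\}$ and set $V:=\widehat{H}^i({\rm Gal}(E/F),\Pi)$. By Theorem \ref{Tate_finite_gen}, $V$ is finitely generated as an $\overline{\mathbb{F}}_\ell[G(F)]$-module. For each proper $F$-parabolic $P$ of $G$ with unipotent radical $N$ and $F$-Levi $M$, the subgroup $P(E)\subset G(E)$ is $\sigma$-stable with unipotent radical $N(E)$, and $r_{N(E)}(\Pi)$ is a finite length representation of $M(E)\rtimes{\rm Gal}(E/F)$. Since $\dim M<\dim G$, the inductive hypothesis applied to $M$ gives that $\widehat{H}^i({\rm Gal}(E/F),r_{N(E)}(\Pi))$ is of finite length as an $M(F)$-representation. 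Theorem \ref{Jacquet_tate_injective} (with $\Theta=1$) then produces an injection $r_{N(F)}(V)\hookrightarrow\widehat{H}^i({\rm Gal}(E/F),r_{N(E)}(\Pi))$, so each Jacquet module $r_{N(F)}(V)$ has finite length over $M(F)$.

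Next, I apply the cuspidal kernel construction of \S\ref{local_bc_setup} to the smooth $G(F)$-module $V$. Letting $V_c$ be the kernel of the natural map $V\to\prod_{P\in\Omega}i_{P(F)}^{G(F)}r_{N(F)}(V)$, the argument around (\ref{ker_cusp_cusp}) shows that $V_c$ is cuspidal. The quotient $V/V_c$ embeds into the target, which has finite length because $\Omega$ is finite and parabolic induction preserves finite length for smooth $\overline{\mathbb{F}}_\ell$-representations of reductive $p$-adic groups. Hence $V/V_c$ is of finite length, and the problem reduces to showing that the cuspidal subrepresentation $V_c$ is of finite length.

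This last step is the main obstacle. By \cite[Corollary 1.3]{dat_jams} combined with \cite[Corollary 4.5]{Dat_finiteness}, subrepresentations of finitely generated smooth $\overline{\mathbb{F}}_\ell[G(F)]$-modules are finitely generated, so $V_c$ is finitely generated. The Galois hypothesis now provides the crucial central-character control: since $E/F$ is Galois, one has $Z(G)(F)=Z(G)(E)\cap G(F)\subset Z(G)(E)$, and $Z(G)(E)$ acts centrally on the $G(E)$-representation $\Pi$; consequently the $Z(G)(F)$-action on $V$ is obtained by restriction of the $Z(G)(E)$-action on $\Pi$. Because $\Pi$ has finite length, only finitely many characters of $Z(G)(E)$ occur on $\Pi$, hence only finitely many characters of $Z(G)(F)$ occur on $V$ and on $V_c$. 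A finitely generated smooth cuspidal $\overline{\mathbb{F}}_\ell$-representation of $G(F)$ on which the center acts through finitely many characters is admissible (cuspidal matrix coefficients are compactly supported modulo center, after passing to each central-character-isotypic component), and a finitely generated admissible smooth representation is of finite length. The same argument, applied on each Levi $M$ using the analogous identity $Z(M(E))\cap G(F)=Z(M(F))$, is what makes the induction go through. This completes the inductive step.
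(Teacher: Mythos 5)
Your argument follows essentially the same architecture as the paper's: induction on $\dim G$, the injection of Jacquet modules of Tate cohomology into Tate cohomology of Jacquet modules (Theorem \ref{Jacquet_tate_injective}), the cuspidal-kernel construction, and the combination of Theorem \ref{Tate_finite_gen}, \cite[Corollary 1.3]{dat_jams}, \cite[Corollary 4.5]{Dat_finiteness}, and the Galois identity $Z(G)(F)=Z(G)(E)\cap G(F)$ to control the cuspidal part. The one genuine structural difference is that the paper runs a \emph{double} induction, on $\dim G$ \emph{and} on the length of $\Pi$, reducing first to the case where $\Pi$ is $G(E)$-irreducible (where $Z(G)(E)$ acts by a single character, by Schur), while you attempt to treat a general finite-length $\Pi$ in one pass by saying ``only finitely many characters of $Z(G)(E)$ occur on $\Pi$, hence only finitely many characters of $Z(G)(F)$ occur on $V$.''

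That last step is where a gap opens up. For a finite-length but non-irreducible $\Pi$, the center $Z(G)(E)$ need not act semisimply: the Jordan--H\"older factors have central characters, but $\Pi$ itself may only carry a filtration with character quotients, not a decomposition into central-character \emph{isotypic} components. Your parenthetical (``after passing to each central-character-isotypic component'') assumes exactly the semisimple decomposition that may fail. To repair this you would either (a) pass to generalized eigenspaces for $Z(G)(F)$ and observe that each generalized eigenspace is a finitely generated (by Dat) cuspidal (by exactness of Jacquet functors) $G(F)$-submodule carrying a filtration with a single central character on each graded piece, and then use that admissibility is stable under extensions; or (b) simply do what the paper does: induct on the length of $\Pi$, split off an irreducible $G(E)$-subrepresentation $\Pi_1$, replace it by the $\Gamma$-stable $\Pi_1'=\bigoplus_i\Pi_1^{\sigma^i}$, and note that either $\Pi_1$ is $\sigma$-stable (so $Z(G)(E)$ acts on it by a \emph{single} character and the standard cuspidal-with-central-character admissibility result of \cite[Section 7.4, Chapter I]{Vigneras_modl_book} applies directly), or $\Pi_1$ is not $\sigma$-stable and $\widehat{H}^i(\Pi_1')=0$. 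Option (b) is strictly cleaner because it only invokes the black-box result for a single central character. As currently written, the non-semisimplicity of the central action is unaddressed, and the claim ``finitely many central characters implies admissible'' is not available off the shelf in the form you need. Your reduction to the Jacquet-module and cuspidal-kernel steps is otherwise correct and matches the paper.
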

\begin{proof}
We use induction on the length
of $\Pi$ and the dimension of $G$ to prove the theorem.
All Tate cohomology groups are with respect to the action of the Galois group
${\rm Gal}(E/F)$, and we drop it from the notation of Tate cohomology. 
Suppose $\Pi$ is irreducible. Consider the natural map 
\begin{equation}\label{non-cuspidal_map}
0\longrightarrow \widehat{H}^i(\Pi)_c
\longrightarrow \widehat{H}^i(\Pi) 
\longrightarrow
\prod_{P\in \Omega} i_{P(F)}^{G(F)}\circ 
r_{U(F)}(\widehat{H}^i(\Pi))
\end{equation} 
For each parabolic $P\in \Omega$
with unipotent radical $U$ and an $F$-rational Levi subgroup $M$, the 
natural map $\Pi\rightarrow r_{U(E)}(\Pi)$ gives an 
$M(F)$ equivariant injective map (Theorem \ref{Jacquet_tate_injective})
$$ r_{U(F)}(\widehat{H}^i(\Pi)) \hookrightarrow \widehat{H}^i(r_{U(E)}(\Pi)). $$
Since $r_{U(E)}(\Pi)$ is a finite length representation of $M(E)$ and
${\rm dim}(M)< {\rm dim}(G)$, the Tate cohomology
$\widehat{H}^i(r_{U(E)}(\Pi))$ has finite length as a representation
of $M(F)$ (by induction hypothesis). This implies that
$r_{U(F)}(\widehat{H}^i(\Pi))$ is a finite length $M(F)$
representation, and therefore
$i_{P(F)}^{G(F)}(r_{U(F)}(\widehat{H}^i(\Pi)))$ is a finite length
$G(F)$ representation as the parabolic induction preserves finite
length (\cite[Chapter II, Section 5.13]{Vigneras_modl_book}). Thus,
the representation
$$ \prod_{P\in \Omega} i_{P(F)}^{G(F)}\circ r_{U(F)}(\widehat{H}^i(\Pi))$$
is of finite length. Note that the center $Z(E)$ of $G(E)$ acts on the
space of $\Pi$ by a character (as $\Pi$ is irreducible). We deduce
that the center $Z(F)$ of $G(F)$ acts on $\widehat{H}^i(\Pi)$ by a
character. Hence, the cuspidal $G(F)$ representation
$\widehat{H}^i(\Pi)_c$, being a subrepresentation of
$\widehat{H}^i(\Pi)$, admits a central character. Since
$\widehat{H}^i(\Pi)$ is finitely generated as a representation of
$G(F)$ (Theorem \ref{Tate_finite_gen}), it follows from
\cite[Corollary 4.5]{Dat_finiteness}) that $\widehat{H}^i(\Pi)_c$ is
also finitely generated. Thus, $\widehat{H}^i(\Pi)_c$ is a finitely
generated $\ell$-modular cuspidal representation of $G(F)$ with a
central character and we get that $\widehat{H}^i(\Pi)_c$ is of finite
length (\cite[Section 7.4, Chapter I]{Vigneras_modl_book}). We
conclude from (\ref{non-cuspidal_map}) that the $\ell$-modular $G(F)$
representation $\widehat{H}^i(\Pi)$ is of finite length.

Now, consider the general case where $\Pi$ is not necessarily an
irreducible representation of $\overline{\mathbb{F}}_\ell[G(F)]$.
Let $\sigma$ be a generator of $\Gamma={\rm Gal}(E/F)$. 
Let
$\Pi_1$ be an irreducible sub-representation of $\Pi$ and set 
$\Pi_1'$ to be the sub-representation of $\Pi$ spanned by
$\Pi_1^{\sigma^i}$, for $0\leq i<\ell$. Note that $\Pi_1'$ is
$\sigma$-stable, and we consider the short exact
sequence of
$\overline{\mathbb{F}}_\ell[G(E)\rtimes \Gamma]$ modules
$$ 0\longrightarrow \Pi_1'\longrightarrow \Pi\longrightarrow \Pi/
\Pi_1'\longrightarrow 0. $$
From the long exact sequence of Tate cohomology, we get the following
exact sequence:
$$\widehat{H}^i(\Pi_1')\rightarrow
\widehat{H}^i(\Pi)\rightarrow \widehat{H}^i(\Pi/\Pi_1').$$
If $\Pi_1$ is not $\sigma$-stable, then $\widehat{H}^i(\Pi_1')$ is
trivial for $i\in \{0,1\}$, and if $\Pi_1$ is $\sigma$-stable, then
$\widehat{H}^i(\Pi_1')$ is a finite length representation of $G(F)$
from the previous case.  
Note that the length of $\Pi/\Pi_1'$ is now strictly less than that of
$\Pi$. Then, using induction hypothesis, the Tate cohomology
$\widehat{H}^i(\Pi/\Pi_1')$ is a finite length
representation of $G(F)$. Thus, we get that $\widehat{H}^i(\Pi)$ is a
finite length representation of $G(F)$.
\end{proof}
As a corollary, we get the following result.
\begin{corollary}
  Let $E/F$ be a finite Galois extension of prime degree $\ell$, where
  $\ell\ne p$. Let $G$ be a connected reductive group defined over
  $F$. Let $\Pi$ be a finite length integral $\mathcal{K}$
  representation of $G(E)\rtimes {\rm Gal}(E/F)$. Then, for any
  $\Lambda[G(E)\rtimes {\rm Gal}(E/F)]$ stable lattice $\mathcal{L}$
  in $\Pi$, the Tate cohomology spaces
  $\widehat{H}^i({\rm Gal}(E/F), \mathcal{L})$ are finite length
  $\ell$-modular representations of $G(F)$.
\end{corollary}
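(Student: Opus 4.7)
The plan is to deduce the corollary directly from Theorem \ref{Tate_finite_length} by sandwiching the Tate cohomology of the integral lattice $\mathcal{L}$ inside that of its mod-$\ell$ reduction $\mathcal{L}/\ell\mathcal{L}$. Since $\Lambda/\ell\Lambda \simeq \overline{\mathbb{F}}_\ell$, the quotient $\mathcal{L}/\ell\mathcal{L}$ is naturally a smooth $\overline{\mathbb{F}}_\ell[G(E)\rtimes {\rm Gal}(E/F)]$-module, and by the discussion of mod-$\ell$ reduction recalled earlier in the excerpt, it is of finite length as a representation of $G(E)$, hence of finite length as a representation of the semi-direct product $G(E)\rtimes {\rm Gal}(E/F)$ as well (the Galois factor being finite).

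To relate the two Tate cohomologies, I would apply the long exact sequence of Tate cohomology to the short exact sequence of $\Lambda[G(E)\rtimes {\rm Gal}(E/F)]$-modules
$$0 \longrightarrow \mathcal{L} \xrightarrow{\ell} \mathcal{L} \longrightarrow \mathcal{L}/\ell\mathcal{L} \longrightarrow 0.$$
Since ${\rm Gal}(E/F)$ is cyclic of order $\ell$, its Tate cohomology groups are annihilated by $\ell$, and so multiplication by $\ell$ on $\widehat{H}^i({\rm Gal}(E/F), \mathcal{L})$ is the zero map. The long exact sequence therefore breaks up into short exact sequences of smooth $G(F)$-representations
$$0 \longrightarrow \widehat{H}^i({\rm Gal}(E/F), \mathcal{L}) \longrightarrow \widehat{H}^i({\rm Gal}(E/F), \mathcal{L}/\ell\mathcal{L}) \longrightarrow \widehat{H}^{i+1}({\rm Gal}(E/F), \mathcal{L}) \longrightarrow 0,$$
for each $i$; by the two-periodicity of Tate cohomology for a cyclic group, it suffices to treat $i\in \{0,1\}$.

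Finally, applying Theorem \ref{Tate_finite_length} to the finite length $\ell$-modular representation $\mathcal{L}/\ell\mathcal{L}$ of $G(E)\rtimes {\rm Gal}(E/F)$ yields that the middle term $\widehat{H}^i({\rm Gal}(E/F), \mathcal{L}/\ell\mathcal{L})$ has finite length as a $G(F)$-representation. Since $\widehat{H}^i({\rm Gal}(E/F), \mathcal{L})$ then embeds as a $G(F)$-subrepresentation of a finite length module, it is itself of finite length, which yields the corollary. The argument is essentially formal once Theorem \ref{Tate_finite_length} is in hand; the only minor technical point is verifying that $\mathcal{L}/\ell\mathcal{L}$ is of finite length as a $G(E)\rtimes {\rm Gal}(E/F)$-module, which reduces to Vign\'eras' finite length theorem for mod-$\ell$ reductions of integral $\ell$-adic representations.
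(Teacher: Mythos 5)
Your proof is correct and takes essentially the same route as the paper: both reduce to Theorem \ref{Tate_finite_length} via the short exact sequence $0\to\mathcal{L}\xrightarrow{\ell}\mathcal{L}\to\mathcal{L}/\ell\mathcal{L}\to 0$, observing that multiplication by $\ell$ vanishes on Tate cohomology so the long exact sequence splits into short exact sequences with middle term $\widehat{H}^i(\mathcal{L}/\ell\mathcal{L})$. The only cosmetic difference is that you conclude via the submodule inclusion while the paper records the full short exact sequences in degrees $0$ and $1$, which is the same argument.
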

\begin{proof}
Recall that $\mathcal{L}$ is a free $\Lambda$-module and that
$\mathcal{L}/\ell\mathcal{L}$ is a finite length representation of $G(E)$
(see \cite[II.5.11.a]{Vigneras_modl_book} for finiteness). Consider the short
exact sequence of $G(E)\rtimes{\rm Gal}(E/F)$ modules
$$ 0\longrightarrow \mathcal{L}
\xrightarrow{{\rm mult.}\ell}
\mathcal{L}\longrightarrow \mathcal{L}/\ell\mathcal{L}
\longrightarrow 0. $$ 
By Theorem \ref{Tate_finite_length}, the Tate cohomology groups
$\widehat{H}^i(\mathcal{L}/\ell\mathcal{L})$ are of finite length as
representations of $G(F)$. From the long exact
sequence of Tate cohomology corresponding to the above short exact
sequence, we have
$$ 0\rightarrow \widehat{H}^0(\mathcal{L})\rightarrow
\widehat{H}^0(\mathcal{L}/\ell\mathcal{L})
\rightarrow
\widehat{H}^1(\mathcal{L})\rightarrow 0. $$
and 
$$ 0\rightarrow \widehat{H}^1(\mathcal{L})\rightarrow
\widehat{H}^1(\mathcal{L}/\ell\mathcal{L})\rightarrow
\widehat{H}^0(\mathcal{L})\rightarrow 0. $$
Thus, we get that each
$\widehat{H}^i(\mathcal{L})$ is of finite length.
\end{proof}
\section{Some genericity and irreducibility results}\label{sec_gen_irr}
\label{Tate_generic_subsection}
In this section, $G$ will be a
split connected reductive algebraic group defined over $F$. We follow the
notations defined in Subsection \ref{local_bc_setup}.
\subsubsection{}
For this section, we follow \cite{Derived_subgroups_Henniart_Bushnell}. 
Let $B$ be a Borel subgroup of $G$ defined over $F$.
Let $E$ be a cyclic Galois extension of $F$ with $[E:F]=\ell$.
Let $S$ be a maximal $F$-split torus and let $\Phi$ the set of roots
of $S$ in  $G$, and let $\Delta$ be the set of simple roots in
$\Phi$ for the choice of $B$. Let $\mathfrak{g}$ be the Lie algebra of
$G$ and let $\mathfrak{g}_\alpha$ be the subspace of $\mathfrak{g}$
for which $S$ acts via $\alpha$. The root subgroup corresponding to
$\alpha\in \Delta$ is denoted by $U_{\alpha}$, this is the unique
connected subgroup of $G$ whose Lie algebra is 
$\mathfrak{g}_\alpha$. Let $R$ denote $\overline{\mathbb{Q}}_\ell$ or
$\overline{\mathbb{F}}_\ell$. A character $\Theta :U(F)\rightarrow
R^\ast$ is called non-degenerate whenever $\Theta$ is non-trivial
on $U_{\alpha}(F)$ for all $\alpha\in \Delta$. Note that
$$U(E)/U(E)_{\rm der}\simeq \prod_{\alpha\in
  \Delta}V_\alpha(E),$$
  where $V_\alpha$ is the one-dimensional $F$-vector group.  Thus we choose a
non-degenerate character
$\Theta_E:U(E)\rightarrow R^\ast$,
which is ${\rm Gal}(E/F)$ stable and the restriction of $\Theta_E$ to
$U(F)$ is denoted by $\Theta_F$ (which is a non-degenerate
character of $U(F)$). We set
$$\mathfrak{m}_F=(U(F),\Theta_F)\ \text{and}\  \mathfrak{m}_E=(U(E),
\Theta_E).$$ 
The multiplicity one theorem says that
$$ {\rm dim}_R \big({\rm Hom}_{G(F)}(V,\,{\rm Ind}_{U(F)}^{G(F)}\Theta_F)\big)\leq 1. $$
An irreducible $R$-representation $(\pi,V)$ of $G(F)$ is called
$\mathfrak{m}_F$-generic if
$$ 
{\rm dim}_R\big({\rm Hom}_{G(F)}(V,\,{\rm
  Ind}_{U(F)}^{G(F)}\Theta_F)\big) = 1.
$$ 
An equivalent condition for $(\pi,V)$ to be
$\mathfrak{m}_F$-generic--which we will use frequently--is that the
twisted Jacquet module $V_{U(F),\Theta_F}\simeq R$. We also have
the obvious notions of $\mathfrak{m}_E$-generic representations of
$G(E)$.  Let $(\pi,V)$ be a smooth generic $R$-representation of
$G(F)$.  Let $\mathcal{J}_\pi$ be a non-zero linear functional in
${\rm Hom}_{U(F)}(V,\Theta_F)$. Let
$\mathbb{W}(\pi,\Theta_F) \subset {\rm Ind}_{U(F)}^{G(F)}(\Theta_F)$
be the space of functions $W_v:G(F)\rightarrow R$, $v \in V$, where
$$ W_v(g) := \mathcal{J}_\pi
\big(\pi(g)v\big), $$
for $g \in G(F)$. The map $v\mapsto W_v$
induces a $G(F)$
equivariant isomorphism from
$(\pi,V)$ onto $\mathbb{W}(\pi,\Theta_F)$.
The space 
$\mathbb{W}(\pi,\Theta_F)$ is
called the {\it Whittaker model} of
$(\pi,V)$.
\subsubsection{}
We now prove that the ${\rm Gal}(E/F)$-Tate cohomology of a generic
$\ell$-modular ${\rm Gal}(E/F)$ stable representation of $G(E)$ has a
unique generic sub-quotient. We proved this result for the case where
$G={\rm GL}_n/F$ using Kirillov model.  However, we give a simpler
proof here. Which will be used in the later sections to prove
irreducibility of Tate cohomology groups of certain representations of
${\rm GL}_n$ obtained as Base change lifts of cuspidal
representations.
\begin{theorem}\label{Tate_generic} 
Let $E/F$ be a finite Galois extension with $[E:F]=\ell$. Let $G$ be
a split reductive algebraic group defined over $F$. Let
$(\Pi,V)$ be a ${\rm Gal}(E/F)$ stable $\mathfrak{m}_E$-generic
$\ell$-modular representation of $G(E)$.  Then the Tate cohomology
group $\widehat{H}^0({\rm Gal}(E/F), \Pi)$ admits a unique
$\mathfrak{m}_F$-generic sub-quotient.
\end{theorem}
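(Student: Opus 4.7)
The plan is to obtain uniqueness of the $\mathfrak{m}_F$-generic sub-quotient as a direct consequence of the general injection of twisted Jacquet modules established in Theorem \ref{Jacquet_tate_injective}, combined with the multiplicity one theorem and a characteristic $\ell$ computation on a one-dimensional Galois module.

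First, I would specialise Theorem \ref{Jacquet_tate_injective} to the situation at hand. Take $P = B$, so that $N = U$ is the unipotent radical of the Borel, let $\Theta = \Theta_E$, and note that by hypothesis $\Theta_E$ is ${\rm Gal}(E/F)$-stable with reduction $\Theta_F$. Since $B$ is defined over $F$, the group $B$ is $\sigma$-stable and $U^\sigma(F) = U(F)$. Applying Theorem \ref{Jacquet_tate_injective} yields a $G(F)$-equivariant injection
$$
\widehat{H}^0(\Pi)_{U(F),\Theta_F} \hookrightarrow \widehat{H}^0\bigl({\rm Gal}(E/F),\,\Pi_{U(E),\Theta_E}\bigr).
$$

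Second, I would compute the right-hand side. Because $\Pi$ is $\mathfrak{m}_E$-generic, multiplicity one gives $\dim_{\overline{\mathbb{F}}_\ell} \Pi_{U(E),\Theta_E} = 1$. The chosen extension of $\Pi$ to $G(E)\rtimes {\rm Gal}(E/F)$ together with ${\rm Gal}(E/F)$-stability of $\Theta_E$ induces a linear action of ${\rm Gal}(E/F)$ on this one-dimensional $\overline{\mathbb{F}}_\ell$-space. Such an action is given by an element of $\overline{\mathbb{F}}_\ell^\times$ of order dividing $\ell$; since $\overline{\mathbb{F}}_\ell$ has characteristic $\ell$, the only such element is $1$. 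Hence the action is trivial, so $\widehat{H}^0({\rm Gal}(E/F),\Pi_{U(E),\Theta_E}) = \overline{\mathbb{F}}_\ell/\ell\overline{\mathbb{F}}_\ell \simeq \overline{\mathbb{F}}_\ell$ is one-dimensional. Combined with the injection above, this shows
$$
\dim_{\overline{\mathbb{F}}_\ell} \widehat{H}^0(\Pi)_{U(F),\Theta_F} \leq 1.
$$

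Finally, to pass from this dimension bound to the desired statement about sub-quotients, I would use exactness of the twisted Jacquet functor $(-)_{U(F),\Theta_F}$ along a Jordan–Hölder filtration of $\widehat{H}^0(\Pi)$ (which is of finite length by Theorem \ref{Tate_finite_length}). For any $\mathfrak{m}_F$-generic irreducible $G(F)$-representation $\tau$, multiplicity one gives $\dim \tau_{U(F),\Theta_F} = 1$, while any non-generic irreducible has a trivial twisted Jacquet module by definition. Summing the dimensions along the filtration, the total dimension of $\widehat{H}^0(\Pi)_{U(F),\Theta_F}$ equals the number (with multiplicity) of $\mathfrak{m}_F$-generic Jordan–Hölder factors. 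The bound above forces this number to be at most $1$, proving the uniqueness of the $\mathfrak{m}_F$-generic sub-quotient.

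The main conceptual step is the characteristic $\ell$ rigidity in step two, which forces the Galois action on a one-dimensional $\overline{\mathbb{F}}_\ell$-space to be trivial and thus keeps the target of the injection one-dimensional; this is what converts the general injection of Theorem \ref{Jacquet_tate_injective} into a genuine uniqueness statement. Everything else is bookkeeping with exactness of Jacquet functors and multiplicity one for Whittaker models, both of which are valid in the $\ell$-modular setting.
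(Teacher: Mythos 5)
Your uniqueness argument (at most one $\mathfrak{m}_F$-generic sub-quotient) is correct and matches the first half of the paper's proof: both rely on the injection from Theorem \ref{Jacquet_tate_injective} applied with $P=B$, the multiplicity-one theorem, and the observation that the Galois action on the one-dimensional twisted Jacquet module $\Pi_{U(E),\Theta_E}$ must be trivial in characteristic $\ell$, so that its $\widehat{H}^0$ is again one-dimensional. Your bookkeeping via exactness of the twisted Jacquet functor along a Jordan--H\"older filtration is a fine way to pass from the dimension bound to the count of generic factors.

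The gap is that you have not proved \emph{existence}: the theorem asserts that $\widehat{H}^0({\rm Gal}(E/F),\Pi)$ \emph{admits} a generic sub-quotient, i.e.\ has exactly one, not merely at most one. Your argument is entirely consistent with $\widehat{H}^0(\Pi)_{U(F),\Theta_F}=0$ (in which case $\widehat{H}^0(\Pi)$ would have no generic factor, and could even vanish). The paper closes this gap by a separate argument: identify $\Pi$ with its Whittaker model $\mathcal{W}\subset {\rm Ind}_{U(E)}^{G(E)}\Theta_E$, consider the restriction-to-$G(F)$ map
\[
\Phi:\mathcal{W}^{\Gamma}\longrightarrow {\rm Ind}_{U(F)}^{G(F)}\Theta_F,\qquad W_v\mapsto {\rm res}_{G(F)}W_v,
\]
and show that $\Phi\neq 0$; $\Phi$ automatically kills ${\rm Nr}(\mathcal{W})$ in characteristic $\ell$, so it descends to a non-zero $G(F)$-map on $\widehat{H}^0(\Pi)$, whose image then has a generic irreducible submodule. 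The non-vanishing of $\Phi$ is obtained by contradiction: if every $\Gamma$-invariant Whittaker function vanishes on $G(F)$, then one produces a proper non-zero $G(E)$-stable subspace of $\Pi$ inside the kernel of the Whittaker functional, contradicting irreducibility of $\Pi$. You should supply an argument of this kind (or some other non-vanishing/existence argument) to complete the proof.
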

\begin{proof}
Let $\Gamma$ be the Galois group ${\rm Gal}(E/F)$.
Let $\mathcal{J}$ be a non-zero element in
${\rm Hom}_{U(E)}(\Pi,\Theta_E)$.  Let $\mathcal{W}$ be the
Whittaker model of $\Pi$. Using Proposition
\ref{Jacquet_tate_injective}, the Tate cohomology applied to the surjective map
$\mathcal{W}\rightarrow \mathcal{W}_{U(E), \Theta_E}$ gives the
following injection:
$$ \widehat{H}^0(\mathcal{W})_{U(F),\Theta_F}
\hookrightarrow \widehat{H}^0(\mathcal{W} _{U(E),\Theta_E}).$$ 
Since $ {\rm Gal}(E/F)$
acts trivially on the twisted Jacquet module
$\mathcal{W}_{U(E),\Theta_E}\simeq \overline{\mathbb{F}}_\ell$, the
Tate cohomology group $\widehat{H}^0(\mathcal{W}_{U(E), \Theta_E})$ is
a one dimensional vector space over $\overline
{\mathbb{F}}_\ell$. Now, let us consider the restriction to $G(F)$ map
$$ \Phi : \mathcal{W}^\Gamma\longrightarrow {\rm Ind}_{U(F)}^{G(F)}\Theta_F$$
$$ W_v \longmapsto {\rm res}_{G(F)}W_v. $$ 
It is enough to show that $\Phi$ is non-zero.  Suppose
$\Phi(W_v) =0$ for all $W_v\in\mathcal{W}^\Gamma$. Let
$v\in {\rm Ker}(\mathcal{J})\cap \mathcal{W}^\Gamma$.  Note that
$\Pi(g)v\in \Ker(\mathcal{J})$, for $g\in G(F)$ and $g\in U(E)$.  This
shows that $\Pi(g)v\in {\rm Ker}(\mathcal{J})$, for all
$g\in G^{\rm der}(E)$. Also note that ${\rm Ker}(\mathcal{J})$ is
stable under the center $Z(E)$, and hence $\langle G(E).v\rangle$ is
contained in ${\rm Ker}(\mathcal{J})$. Now, by the irreducibility of
$\Pi$, we get a contradiction to the assumption that the map $\Phi$ is
non-zero.
\end{proof}
% Note that $\Phi$ factorises through the map
% $$ \widehat{H}^0(\mathcal{W})
% \longrightarrow {\rm Ind}_{U(F)}^{G(F)} \Theta_F. $$
 % Then it follows
% from the exactness of twisted Jacquet functors and the finiteness of
% Tate chomology (see Theorem \ref{Tate_finite_length}) that
% $\widehat{H}^0(\mathcal{W})$ admits a unique generic sub-quotient.
% This
% implies that
% $$ \widehat{H}^0(\mathcal{W})
% _{U(F),\Theta_F} \simeq \overline{\mathbb{F}}_l.$$
\subsection{Irreducibility of 
Tate cohomology for ${\rm GL}_n$}
Let $G$ be the group ${\rm GL}_n$, and we prove the irreducibility of
the ${\rm Gal}(E/F)$-Tate cohomology of
${\rm GL}_n(E)\rtimes {\rm Gal}(E/F)$ stable lattices in certain integral
generic $\mathcal{K}$-representations of ${\rm GL}_n(E)$. Assume
that $n=\ell m$ for some integer $m\geq 1$.  Let $(\Pi,V)$ be the irreducible
representation
\begin{equation}\label{bc_noncuspidal}
\tau\times\tau^\sigma\times\cdots\times
\tau^{\sigma^{\ell-1}},
\end{equation} 
where $\tau$ is an absolutely irreducible cuspidal
$\mathcal{K}$-representation of ${\rm GL}_m(E)$ such that $\tau$ is
not isomorphic to twisted representation $\tau^\sigma$. Note that
$\Pi\simeq \Pi^\sigma$, which induces an action of ${\rm Gal}(E/F)$ on
the space $V$ that is compatible with the ${\rm Gal}(E/F)$ action on
${\rm GL}_n(E)$. We further assume that $\tau$ is integral, i.e.,
there exists a ${\rm GL}_m(E)$ stable $\Lambda$-lattice in the
underlying space of $\tau$. The representation $\Pi$ admits a
${\rm GL}_n(E)\rtimes {\rm Gal}(E/F)$ stable $\Lambda$-lattice in $V$.
% , which is stable
% under the action of , namely the space of
% $\Lambda$-valued functions in the Whittaker model
% $\mathbb{W}(\Pi,\Theta_E)$ (see \cite[Lemma 2.4]{nadimpalli2024tate}).
We now prove the following result.
\begin{theorem}\label{Tate_bc_cuspidal}
  Let $(\Pi,V)$ be an absolutely irreducible, integral, generic
  $\mathcal{K}$-representation of ${\rm GL}_n(E)$ of the form
  (\ref{bc_noncuspidal}). Then for any
  ${\rm GL}_n(E)\rtimes {\rm Gal}(E/F)$ stable $\Lambda$-lattice
  $\mathcal{L}\subseteq V$, the zeroth Tate cohomology group
  $\widehat{H}^0({\rm Gal}(E/F),\mathcal{L})$ is irreducible and
  cuspidal. Moreover, the first Tate cohomology group
  $\widehat{H}^1({\rm Gal}(E/F),\mathcal{L})$ is trivial.
\end{theorem}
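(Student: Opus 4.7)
The strategy has three steps. First, I prove that $\widehat{H}^i({\rm Gal}(E/F), \mathcal{L})$ is a cuspidal representation of ${\rm GL}_n(F)$ by controlling its Jacquet modules via Theorem~\ref{Jacquet_tate_injective}: for any proper $F$-rational parabolic $P$ of ${\rm GL}_n$ with unipotent radical $N$, one has
\[
\widehat{H}^i(\mathcal{L})_{N(F)} \hookrightarrow \widehat{H}^i(\mathcal{L}_{N(E)}).
\]
Since $\tau \not\cong \tau^\sigma$ and $\ell$ is prime, the cuspidals $\tau, \tau^\sigma, \dots, \tau^{\sigma^{\ell-1}}$ are pairwise non-isomorphic. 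By the Bernstein--Zelevinsky geometric lemma, the composition factors of $\Pi_{N(E)}$ are parabolic inductions indexed by ordered distributions of $\{\tau^{\sigma^i}\}_{i=0}^{\ell-1}$ into the blocks of the Levi of $P$. The cyclic $\sigma$-action on $\{\tau^{\sigma^i}\}$ is transitive, so no distribution corresponding to a proper parabolic is $\sigma$-stable, and $\sigma$ acts freely on composition factors with orbits of size $\ell$. A $\Gamma$-stable $\Lambda$-lattice filtration of $\mathcal{L}_{N(E)}$ compatible with these orbits has graded pieces which are lattices in $\Gamma$-induced representations from non-$\sigma$-stable modules; their Tate cohomology vanishes, and a standard devissage yields $\widehat{H}^i(\mathcal{L}_{N(E)}) = 0$, and hence cuspidality of $\widehat{H}^i(\mathcal{L})$.

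Second, I apply Theorem~\ref{Jacquet_tate_injective} with the Whittaker character $\Theta_E$ on the unipotent radical $U$ of the Borel to obtain
\[
\widehat{H}^i(\mathcal{L})_{U(F), \Theta_F} \hookrightarrow \widehat{H}^i(\mathcal{L}_{U(E), \Theta_E}).
\]
Since $\Pi$ is $\mathfrak{m}_E$-generic, $\Pi_{U(E), \Theta_E}$ is one-dimensional over $\mathcal{K}$; the uniqueness (up to scalar) of the Whittaker functional and the $\sigma$-invariance of $\Theta_E$ force $\sigma$ to act by an $\ell$-th root of unity in $\mathcal{K}$, which is trivial because $\mu_\ell(\mathcal{K}) = \{1\}$ (as $\mathcal{K}/\mathbb{Q}_\ell$ is the maximal unramified extension). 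Hence $\mathcal{L}_{U(E), \Theta_E} \cong \Lambda$ with trivial $\sigma$-action, giving $\widehat{H}^0(\mathcal{L}_{U(E), \Theta_E}) = \overline{\mathbb{F}}_\ell$ and $\widehat{H}^1(\mathcal{L}_{U(E), \Theta_E}) = 0$ (as $\Lambda$ is $\ell$-torsion-free). It follows that $\widehat{H}^1(\mathcal{L})_{U(F), \Theta_F} = 0$ and $\dim \widehat{H}^0(\mathcal{L})_{U(F), \Theta_F} \leq 1$.

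Finally, every cuspidal $\overline{\mathbb{F}}_\ell$-representation of ${\rm GL}_n(F)$ is $\mathfrak{m}_F$-generic in Vign\'eras's $\ell$-modular theory, so every Jordan--H\"older constituent of the cuspidal $\widehat{H}^i(\mathcal{L})$ is generic. Combining with the second step, $\widehat{H}^1(\mathcal{L})$ has no generic constituent and therefore vanishes, while $\widehat{H}^0(\mathcal{L})$ has at most one Jordan--H\"older factor, so is either zero or irreducible cuspidal. To rule out vanishing of $\widehat{H}^0(\mathcal{L})$, I would normalise the Whittaker functional $\mathcal{J} : V \to \mathcal{K}$ so that $\mathcal{J} \circ \sigma = \mathcal{J}$ and, following the technique in the proof of Theorem~\ref{Tate_generic}, exhibit a $\sigma$-fixed lattice vector whose Whittaker restriction to $G(F)$ has a unit value; such a vector represents a nontrivial class in $\widehat{H}^0(\mathcal{L})$ because the Whittaker restriction of any norm-image ${\rm Nr}(x)$ is divisible by $\ell$. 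The principal obstacle is the lattice-level analysis in the first step: verifying that the Bernstein--Zelevinsky filtration of $\Pi_{N(E)}$ lifts to a $\Gamma$-stable $\Lambda$-lattice filtration of $\mathcal{L}_{N(E)}$ whose graded pieces correspond precisely to the $\sigma$-orbits of composition factors, which rests on Dat's preservation of lattices under the Jacquet functor (\cite{Dat_tempered}) together with a careful organisation of the filtration into $\sigma$-orbits.
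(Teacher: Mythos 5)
Your proposal follows essentially the same route as the paper: cuspidality of $\widehat{H}^i(\mathcal{L})$ via Theorem~\ref{Jacquet_tate_injective} applied to Jacquet modules, vanishing of $\widehat{H}^1$ and irreducibility of $\widehat{H}^0$ via the twisted Whittaker Jacquet module together with the fact that cuspidal $\ell$-modular representations of ${\rm GL}_n(F)$ are generic, and non-vanishing of $\widehat{H}^0$ by the mechanism of Theorem~\ref{Tate_generic}. The ``principal obstacle'' you flag -- organising $\mathcal{L}_{N(E)}$ into a $\Gamma$-stable $\Lambda$-filtration with cohomologically trivial graded pieces -- is resolved in the paper by a direct construction rather than lifting the full Bernstein--Zelevinsky filtration: pick an irreducible $M(E)$-submodule $W_1$ of $W:=V_{N(E)}$ (it is not $\Gamma$-stable, by the geometric-lemma observation you make), set $\mathcal{M}:=\bigoplus_{i=0}^{\ell-1}\sigma^i(\mathcal{W}\cap W_1)\subseteq\mathcal{W}$, which is a free $\Lambda[\Gamma]$-module because $\Gamma$ permutes the summands freely, so $\widehat{H}^i(\Gamma,\mathcal{M})=0$; the long exact sequence then gives $\widehat{H}^i(\Gamma,\mathcal{W})\simeq\widehat{H}^i(\Gamma,\mathcal{W}/\mathcal{M})$, and one concludes by induction on the length of $W$, using that the Jordan--H\"older constituents of $W$ are multiplicity-free and none is $\Gamma$-stable.
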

\begin{proof}
  We set $\Gamma = {\rm Gal}(E/F)$. To see the cuspidality of the Tate
  cohomology groups $\widehat{H}^i(\Gamma,\mathcal{L})$, consider the
  Jacquet module $\widehat{H}^i(\Gamma,\mathcal{L})_{N(F)}$, where $N$
  is the unipotent radical of an $F$-rational proper parabolic
  subgroup $P$ of ${\rm GL}_n$. Let $M$ be an $F$-rational Levi
  subgroup of $P$. Using Proposition \ref{Jacquet_tate_injective}, we
  have $M(F)$ equivariant injection
$$ \widehat{H}^i(\Gamma, \mathcal{L})_{N(F)} 
\hookrightarrow \widehat{H}^i(\Gamma, \mathcal{L}_{N(E)}). $$ We show
that the Tate cohomology group
$\widehat{H}^i(\Gamma, \mathcal{L}_{N(E)})$ is trivial for
$i\in \{0,1\}$. Say $W$ is a finite length integral $\ell$-adic
$M(E)\rtimes \Gamma$ representation such that the Jordan--Holder
factors of $W$, as a representation of $M(E)$, are multiplicity
free. Assume that all irreducible $M(E)$ sub-representation of $W$ are
not stable under the action of $\Gamma$. The Jacquet module $V_{N(E)}$
is a special case of $W$ and it is integral as $\mathcal{L}_{N(E)}$ is
a $M(E)\rtimes \Gamma$ stable $\Lambda$-lattice in $V_{N(E)}$ by
\cite[Proposition 1.4] {Dat_tempered}.  We use induction on the length
of $W$ to show that $\widehat{H}^i(\Gamma, \mathcal{W})$ is trivial,
for all $M(E)\rtimes \Gamma$ stable lattice $\mathcal{W}$ in $W$.

Let $W_1$ be an irreducible $M(E)$ sub-representation of $W$
and assume that $W_1$ is not fixed by $\Gamma$. 
The $\Lambda$-module
$$ \mathcal{M}:=\bigoplus_{i=0}
^{\ell-1}\sigma^i (\mathcal{W}\cap W_1) $$ is a $M(E)\rtimes \Gamma$
stable lattice in $W_2=\bigoplus_{i=0}^{\ell-1} \sigma^i(W_1)$ and it
is contained in the lattice $\mathcal{W}$. Since $\mathcal{M}$ is a
projective $\Lambda[\Gamma]$-module, the Tate cohomology group
$\widehat{H}^i (\Gamma, \mathcal{M})$ is trivial for $i\in
\{0,1\}$. Now, using the long exact sequence of Tate cohomology
corresponding to the following short exact sequence
$$ 0\longrightarrow \mathcal{M}\longrightarrow
\mathcal{W}\longrightarrow \mathcal{W}/\mathcal{M}
\longrightarrow 0, $$ 
we get that
$$\widehat{H}^i(\Gamma, \mathcal{W})\simeq
\widehat{H}^i(\Gamma, \mathcal{W}
/\mathcal{M}). $$ Since the quotient
$\mathcal{W}/\mathcal{M}$ is a
$M(E)\rtimes \Gamma$ stable $\Lambda$-lattice in
$W/W_2$, and the length of $W/W_2$ is
strictly less than that of $W$, the Tate cohomology group
$\widehat{H}^i(\Gamma, \mathcal{W} /\mathcal{M})$ is trivial (by
induction). Therefore, $\widehat{H}^i(\Gamma, \mathcal{W})$ is
trivial. 
Thus, we get that the Tate cohomology 
$\widehat{H}^i(\Gamma, \mathcal{L}_{N(E)})$ is trivial. 
Hence, the representation $\widehat{H}^i(\Gamma, \mathcal{L})$ is cuspidal.
The $G(F)$ representation 
$\widehat{H}^0(\Gamma, \mathcal{L})$ admits a unique
generic sub--quotient (Theorem \ref{Tate_generic}).  Since
$\widehat{H}^0(\Gamma, \mathcal{L})$ is also cuspidal, it is
irreducible. From the inclusion 
$$ \widehat{H}^1(\Gamma, \mathcal{L})_{N(F),\Theta_F}
\hookrightarrow \widehat{H}^1 (\Gamma, \mathcal{L}_{N(E),\Theta_E}). $$
we get that
$\widehat{H}^1(\Gamma, \mathcal{L}_{N(E),\Theta_E})$ is trivial, and so is
$\widehat{H}^1(\Gamma, \mathcal{L})_{N(F), \Theta_F}$. Since
$\widehat{H}^1(\Gamma, \mathcal{L})$ is also cuspidal, we get that
$\widehat{H}^1(\Gamma, \mathcal{L})$ is trivial.
\end{proof}
%  If
% $W_1$ is a $\mathcal{K}$-representation of
% $M(E)\rtimes \langle\sigma\rangle$ such that the length of $W_1$ is
% strictly less than the length of $W$, and the Jordan--Holder factors
% of $W_1$ are not ${\rm Gal}(E/F)$ stable, then for any
% $M(E)\rtimes {\rm Gal}(E/F)$ stable $\Lambda$-lattice
% $\mathcal{L}'\subseteq W_1$, the Tate cohomology group
% $\widehat{H}^i(\mathcal{L}')$ is trivial.
\begin{remark}\label{Remark_Tate_bc_cuspidal}
\normalfont If $\Pi$ is an absolutely irreducible, integral, generic
$\mathcal{K}$-representation of the form (\ref{bc_noncuspidal}), and
the mod-$\ell$ reduction $r_\ell(\Pi)$ is irreducible, then it
follows from a similar argument as in Theorem
\ref{Tate_bc_cuspidal} that 
$\widehat{H}^0({\rm Gal}(E/F), r_\ell(\Pi))$ is an
irreducible $\ell$-modular cuspidal representation of ${\rm GL}_n(F)$.
\end{remark}
\section{Tate cohomology and Shintani correspondence}\label{sec_shintani}
In this section, we study ${\rm
  Gal}(\mathbb{F}_{q^\ell}/\mathbb{F}_{q})$-Tate cohomology of Galois
invariant lattices in Shintani liftings of cuspidal representations of
${\rm GL}_n(\mathbb{F}_q)$ to ${\rm GL}_n(\mathbb{F}_{q^\ell})$. The
primary novelty of this section is that it treats the case where the
Shintani lifting of a cuspidal representation is not necessarily
cuspidal. 
\subsection{Character formula for finite groups of Lie type}
\subsubsection{}
We begin with the character theory of the finite group
${\rm GL}_n(\mathbb{F}_q)$. Although, irreducible characters of
${\rm GL}_n(\mathbb{F}_q)$ are obtained by J. A. Green
(\cite{Green_finite_groups}), for the present purpose, we find it
convenient to use Deligne--Lusztig theory. Let us consider a connected
reductive algebraic group $G$ defined over $\mathbb{F}_q$, and let
$F:G\rightarrow G$ be the corresponding Frobenius endomorphism. Let
$(S_0, B_0)$ be a pair consisting of an $F$-stable Borel subgroup
$B_0$ and an $F$-stable maximal torus $S_0$ of $B_0$. The automorphism
induced by the Frobenius endomorphism $F$ on the Weyl group
$W(G, S_0)$ is again denoted by $F$. For an $F$-stable maximal torus
$T$ of $G$, there exists $w\in W(G, S_0)$ such that $(T, F)$ is
isomorphic to $(S_0, {\rm Ad}(w)\circ F)$. Two $F$-stable maximal tori
$T_1$ and $T_2$ of $G$ are conjugate in $G^F$ if and only if the
corresponding Weyl group elements $w_1$ and $w_2$ are $F$-conjugate in
$W(G, S_0)$, i.e., if $w_1=ww_2F(w)^{-1}$, for some $w\in W(G, S_0)$.
For $w\in W(G, S_0)$, we denote by $T_w$ an $F$-stable maximal torus
of $G$ such that $(T_w, F)$ is isomorphic to
$(S_0, {\rm Ad}(w)\circ F)$. For the above results, we refer to
\cite[Corollary 1.14]{Deligne_Luszting_paper}.
\subsubsection{}
For an $F$-stable maximal torus $T$ and a character $\theta$ of $T^F$,
Deligne--Lusztig associate a virtual representation $R^G_{T,
  \theta}$. Every irreducible $\ell$-adic representation of $G^F$
occurs in a virtual character $R^G_{T, \theta}$, for some torus $T$
and $\theta\in \widehat{T^F}$.  For all unipotent elements $u\in G^F$,
we denote by $Q_{G, T, F}(u)$, the Green's function, which is defined
as the value $R^G_{T, 1}(u)$.  Let $g\in G^F$ be an element with
Jordan decomposition $g=su$. Let $C_G(s)^0$ be the connected component
of the centralizer of $s\in G^F$. Let $T_1,T_2,\dots, T_k$ be
representatives of the $(C_G(s)^0)^F$-conjugacy classes of $F$-stable
maximal tori in $C_G(s)^0$, which are conjugate to $T$ in $G^F$. For
simplicity, we denote by $H$ the group $C_G(s)^0$.  
The Deligne--Lusztig character formula 
(see \cite[Theorem 4.2]{Deligne_Luszting_paper}
or \cite[Lemma 2.2.23, Chapter 2]{Malle_Geck_characters})
gives
\begin{equation}\label{DL_char_form_1}
R_{T, \theta}^G(su)=\sum_{1\leq i\leq k}Q_{H, T_i, F}(u)\sum_{w\in W(G,
T_i)^F/W(H, T_i)^F}\theta(wsw^{-1}).
\end{equation}
Let $T$ be an $F$-stable maximal torus of $G$ such that $T$ is
contained in an $F$-stable parabolic subgroup $P$ of $G$. Let $M$ be
an $F$-stable Levi-subgroup of $P$. Let $\theta\in
\widehat{T^F}$. Then we have
\begin{equation}
R_{T, \theta}^G={\rm Ind}_{P, M}^G
(R^M_{T, \theta}). 
\end{equation}
The notation ${\rm Ind}_{P,M}^G$ means the parabolic induction of a
virtual character of $M$ to $G$. 
\subsubsection{}
When $G$ is ${\rm GL}_n$ and $F$ is the Frobenius
endomorphism induced by
$x\mapsto x^q$, the action of $F$ on $W(G, S_0)\simeq S_n$ is trivial
and the $F$-stable maximal tori of $G$ are parametrised by the conjugacy classes of
$S_n$.  Every cuspidal representation of $G^F$ is equal to
$(-1)^{n-1}R_{T, \theta}^G$, where $T$ is the torus corresponding to the
conjugacy class of $W(G, S_0)$ containing $n$-cycles, and
$\theta\in \widehat{T^F}$ is in general position, i.e.,
$\theta^w\neq \theta$ for all $w\in W(G, T)^F$. For an unipotent
element $u\in G^F$ with associated partition $\mu$ of the integer
$n$, and a maximal torus corresponding to a partition $\lambda$ of the
integer $n$, the Green's function $Q_{G, T, F}(u)$ is explicitly
computed by J. A. Green, and they are related to Hall polynomials. We
will not make use of this relation with Hall polynomials, however, we
will use the connection of Green's polynomials with the representation
theory of $S_n$ due to T. Springer. 
In the fundamental work (\cite[Theorem 7.8]{TA_Springer}), 
T. Springer showed that
$$Q_{G, T_w, F}(u)=\sum_{i\geq 0}a_i(w, u)q^i$$
where the map
$$w\mapsto a_i(w, u)$$
is the character of an irreducible representation of $S_n$. 
These results have their analogues for all finite reductive groups. 
\subsubsection{}
Let $F:{\rm GL}_n\rightarrow {\rm GL}_n$ be the Frobenius endomorphism
of ${\rm GL}_n$ induced by the map $x\mapsto x^q$ on the entries of
the matrices. Let $w\in S_n$ be an $n$-cycle, and let $T_w$ be an
$F$-stable maximal torus of ${\rm GL}_n$ corresponding to the element
$w$. Let $s\in T_w^F$ be an element such that the characteristic
polynomial of $s$ is $f(X)^r$ where $f(X)\in \mathbb{F}_q[X]$ is an
irreducible polynomial of degree $d$. We have $n=dr$. Let $H$ be the
connected group $C_{{\rm GL}_n}(s)$.  Note that
$H(\overline{\mathbb{F}}_q)$ is isomorphic to
${\rm GL}_r(\overline{\mathbb{F}}_q)^d$ and the Frobenius endomorphism
$F:H\rightarrow H$ is given by
$$(g_1,g_2,\dots, g_d)\longmapsto ({\rm Fr}(g_d), {\rm Fr}(g_1),\dots,
{\rm Fr}(g_{d-1})),$$
where ${\rm Fr}$ is the Frobenius endomorphism of ${\rm GL}_r$
sending a matrix $(g_{ij})\in {\rm GL}_n(\overline{\mathbb{F}}_q)$
to $(g_{ij}^{q})$. Then $H^F$ is
isomorphic to ${\rm GL}_r(\mathbb{F}_{q^d})$. Let $D_r$ and $B_r$ be
the diagonal torus and the upper triangular Borel subgroup of
${\rm GL}_r$. Note that $D_r^d$ and $B_r^d$ give an $F$-stable maximal
torus and an $F$-stable Borel subgroup to be denoted by $S_H$ and
$B_H$, respectively. The Frobenius endomorphism $F$ on $H$ induces a
map $F^\ast$ on $W(H, S_H)$. If we identify $W(H, S_H)$ with
$S_{r}^d$, then
$$F^\ast(\sigma_1,\sigma_2,\dots, \sigma_d)=(\sigma_d, \sigma_1,\dots,
\sigma_{d-1}).$$
The next lemma is about the $F^\ell$-stable maximal tori in $H$ which are
conjugate to $T_w$ by an element in ${\rm
GL}_n(\mathbb{F}_{q^\ell})$. Along the way, we prove some standard
observations which we could not find a convenient reference.
\begin{lemma}\label{Conjugacy_maximal_torus}
Let $w$ be an $n$-cycle in $S_n$, and let $T_w$ be an $F$-stable
maximal torus of ${\rm GL}_n$ as above.
Let $s\in T_w^F$ be an element with
characteristic polynomial $f(X)^r$, where $f(X)\in \mathbb{F}_q[X]$
is an irreducible polynomial of degree $d$. 
\begin{enumerate}
\item Let $S_1$ and $S_2$ be two $F$-stable maximal tori of $H$ such that
$gS_1g^{-1}=S_2$ for some $g\in {\rm GL}_n(\mathbb{F}_q)$. Then
there exists an element $h\in H^F$ such that $hS_1h^{-1}=S_2$. 
\item Assume that $(\ell, d)=1$. Let $S_1$ and $S_2$ be two $F^\ell$-stable
maximal tori of $H$ such that $gS_1g^{-1}=S_2$ for some
$g\in {\rm GL}_n(\mathbb{F}_{q^\ell})$. Then there exists an element
$h\in H^{F^\ell}$ such that $hS_1h^{-1}=S_2$.
\item Let $S$ be an $F$-stable maximal
torus of $H$ such that $gSg^{-1}=T_w$ for some $g\in {\rm
GL}_n(\mathbb{F}_{q^\ell})$. There exists an element $h\in H^{F^\ell}$ such
that $hSh^{-1}=T_w$. 
\end{enumerate}
\end{lemma}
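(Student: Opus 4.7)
The plan is to exploit the structure of $H = C_G(s)$ as a Weil restriction. Because $s$ has characteristic polynomial $f(X)^r$ with $f\in \mathbb{F}_q[X]$ irreducible of degree $d$, the module $\mathbb{F}_q^n$ acquires the structure of a free $\mathbb{F}_q[X]/(f)\cong \mathbb{F}_{q^d}$-module of rank $r$, so that $H\cong \operatorname{Res}_{\mathbb{F}_{q^d}/\mathbb{F}_q}\operatorname{GL}_r$ as algebraic groups over $\mathbb{F}_q$ and $H^F\cong \operatorname{GL}_r(\mathbb{F}_{q^d})$. Under this identification, the $H^F$-conjugacy classes of $F$-stable maximal tori of $H$ are parametrized by partitions $\lambda=(\lambda_1,\dots,\lambda_k)$ of $r$, with the class corresponding to $\lambda$ represented by $T_\lambda^H := \prod_i \operatorname{Res}_{\mathbb{F}_{q^{d\lambda_i}}/\mathbb{F}_{q^d}}\mathbb{G}_m$.

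For part~(1), the key observation is that the torus $T_\lambda^H$, viewed as a maximal torus of $G=\operatorname{GL}_n$ over $\mathbb{F}_q$, equals $\prod_i \operatorname{Res}_{\mathbb{F}_{q^{d\lambda_i}}/\mathbb{F}_q}\mathbb{G}_m$, and therefore lies in the $G^F$-conjugacy class labelled by the partition $d\lambda:=(d\lambda_1,\dots,d\lambda_k)$ of $n$. Since the map $\lambda\mapsto d\lambda$ on partitions is manifestly injective, two $F$-stable maximal tori of $H$ that become $G^F$-conjugate must already share the same $H$-partition $\lambda$, and are hence $H^F$-conjugate. For part~(2), the hypothesis $(\ell,d)=1$ is precisely what guarantees that $f$ remains irreducible over $\mathbb{F}_{q^\ell}$: for any root $\alpha$ of $f$ one has $[\mathbb{F}_{q^\ell}(\alpha):\mathbb{F}_{q^\ell}] = d/\gcd(\ell,d)=d$. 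Consequently $H^{F^\ell}\cong \operatorname{GL}_r(\mathbb{F}_{q^{d\ell}})$, and the partition argument of part~(1) applies verbatim with $(F,q)$ replaced by $(F^\ell, q^\ell)$.

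Part~(3) will then follow directly from part~(2): since $S$ is $F$-stable it is in particular $F^\ell$-stable, as is $T_w$, and the hypothesis supplies $g\in G^{F^\ell}$ conjugating $S$ to $T_w$; applying part~(2) with $S_1=S$ and $S_2=T_w$ produces the desired $h\in H^{F^\ell}$. The main non-trivial ingredient throughout is the explicit Weil-restriction description of $H$ combined with the partition parametrization of its $F$-stable maximal tori and the rule $\lambda\mapsto d\lambda$ tracking these tori inside $\operatorname{GL}_n$; the delicate point is part~(2), where the coprimality condition $(\ell,d)=1$ is exactly what is needed to preserve the Weil-restriction picture (and hence the partition parametrization) after enlarging the base field to $\mathbb{F}_{q^\ell}$.
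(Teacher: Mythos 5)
Your proofs of parts (1) and (2) are correct and essentially equivalent to the paper's argument: the paper works with $F^\ast$-conjugacy classes in the Weyl group $W(H,S_H)\simeq S_r^d$ (with $F^\ast$ the cyclic shift), while you work directly with the Weil-restriction description $H\cong\operatorname{Res}_{\mathbb{F}_{q^d}/\mathbb{F}_q}\operatorname{GL}_r$; these are two faces of the same structural fact, and either yields the injective rule $\lambda\mapsto d\lambda$ tracking $H^F$-classes of tori inside $\operatorname{GL}_n(\mathbb{F}_q)$.

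Your proof of part (3), however, has a genuine gap. You reduce (3) to (2) by observing that $S$ and $T_w$ are $F^\ell$-stable; but part (2) carries the hypothesis $(\ell,d)=1$, whereas part (3) does not. Part (3) is, in fact, needed in the paper precisely in the complementary case $\ell\mid d$, which your argument never touches (in Proposition~\ref{char_iden_3}, the case $(\ell,d)=1$ is dispatched by part (2) and the case $\ell\mid d$ by part (3)). When $\ell\mid d$, the Weil-restriction picture over $\mathbb{F}_{q^\ell}$ breaks: $\mathbb{F}_{q^d}\otimes_{\mathbb{F}_q}\mathbb{F}_{q^\ell}\cong(\mathbb{F}_{q^d})^\ell$, so $H\times_{\mathbb{F}_q}\mathbb{F}_{q^\ell}\cong\prod_{i=1}^{\ell}\operatorname{Res}_{\mathbb{F}_{q^d}/\mathbb{F}_{q^\ell}}\operatorname{GL}_r$ and $H^{F^\ell}\cong\operatorname{GL}_r(\mathbb{F}_{q^d})^\ell$. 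The $H^{F^\ell}$-classes of $F^\ell$-stable maximal tori are then indexed by \emph{$\ell$-tuples} of partitions of $r$, and the map to partitions of $n$ (sending $(\lambda^{(1)},\dots,\lambda^{(\ell)})$ to the concatenation of the rescaled partitions) is not injective — distinct orderings of the $\lambda^{(i)}$'s, or more refined rearrangements, can produce the same partition of $n$. What rescues the statement in this case is the $F$-stability of $S$ (not merely $F^\ell$-stability), a hypothesis you note but do not exploit: the Frobenius $F$ permutes the $\ell$ factors of $H\times\mathbb{F}_{q^\ell}$ cyclically, forcing the $\ell$-tuple of partitions attached to an $F$-stable torus to be constant, say $(\mu,\mu,\dots,\mu)$. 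Matching the concatenation to that of $T_w$ (which is the constant tuple $((r),\dots,(r))$) then pins down $\mu=(r)$, completing the argument. Your proposal omits this case entirely.
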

\begin{proof}
Let $w_1$ and $w_2$ be two elements of $W(H, S_H)$ corresponding to $S_1$ and $S_2$,
respectively. We
identify the elements $w_1$ and $w_2$ by the $d$-tuples
$(s_1,\dots,s_d)$ and $(t_1,\dots,t_d)$ respectively, via the
isomorphism
$$ W(H,S_H)\simeq S_{r}\times\cdots\times S_{r},$$
where $s_i, t_j \in S_{r}$ for $1\leq i,j\leq d$. If there exists an element
$k\in S_{r}$  such that
$$ks_1s_2\cdots s_dk^{-1}=t_1t_2\cdots t_d.$$
We define $k_r$ by setting
$$ k_r = (t_r\cdots t_2t_1)\, k\,(s_r\cdots s_2s_1)^{-1},\ 1\leq r\leq d-1. $$
Let $w=(k_1,\dots,k_{d-1},k)$ be an element of $W(H,S_H)$, and we have
$$ww_1F^*(w)^{-1} = w_2.$$ Thus, $w_1$ is $F^*$-conjugate to $w_2$.
This shows that the tuples $(s_1,\dots,s_d)$ and $(t_1,\dots,t_d)$ are
$F^\ast$ conjugate in $W(H, S_H)$ if and only if $s_1s_2\cdots s_d$ and
$t_1t_2\cdots t_d$ are conjugate in $S_r$. We deduce that any
$F^\ast$-conjugacy class in $W(H, S_H)$ has a representative
$(s_1,\dots, s_d)$ such that $s_1=s_2=\cdots=s_{d-1}={\rm id}$ and we
choose such a representatives for the tuples $(s_1, s_2,\dots, s_d)$ and
$(t_1,t_2,\dots, t_d)$. Assume that $s_d$ corresponds to the partition
$(r_1,r_2,\dots, r_k)$ of $r$. Then $S_1$ is an
$F$-stable maximal torus of ${\rm GL}_n$ corresponding to the partition
$(dr_1,dr_2,\dots, dr_k)$. This shows the lemma in this case.
	
The part $(2)$ of the lemma is similar to part $(1)$. First assume that
$(\ell, d)=1$. Note that the Frobenius endomorphism $F$ on $H$ is the
composition $\omega\circ {\rm Fr}$, where $\omega$ is the $d$-cycle
$(1,2,\dots,d)$. Then we have the identity
$F^{\ell} = \omega^\ell\circ {\rm Fr}^\ell$. Since $\ell$ does not
divide $d$, $w^\ell$ is also a $d$-cycle. Using a similar argument as
in part $(1)$, we get that $S_1$ and $S_2$ are conjugate in
$H^{F^{\ell}}$.

For part $(3)$ of the lemma, using the above discussion for every
$(F^\ast)^\ell$-conjugacy class in $W(H, S_H)$, we can associate
$\ell$-partitions $(r_{i1}, r_{i2},\dots, r_{ik_i})$ of the positive integer 
$d/\ell$, for $1\leq i\leq \ell$; and the torus associated with this 
$(F^\ast)^\ell$-conjugacy class of
$W(H, S_H)$ is ${\rm GL}_n(\mathbb{F}_{q^\ell})$-conjugate to a torus
of ${\rm GL}_n$ corresponding to the partition
$$(dr_{11}/\ell, dr_{12}/\ell,\dots, dr_{1k_1}/\ell, dr_{21}/\ell,
dr_{22}/\ell,\dots, dr_{2k_2}/\ell, \dots, dr_{\ell1}/\ell,
dr_{\ell2}/\ell,\dots, dr_{lk_\ell}/\ell).$$ Now the lemma follows
since $k_i=1$, for $1\leq i\leq \ell$ and $r_{i1}=r$.  This completes
the proof.
\end{proof}
\subsection{Shintani correspondence}\label{BC_finite_groups}
Let $\sigma$ be the Frobenius element in
${\rm Gal}(\mathbb{F}_{q^\ell}/\mathbb{F}_q)$.  Let $\Pi$ be an $\ell$-adic irreducible
representation of ${\rm GL}_n(\mathbb{F}_{q^\ell})$ such that $\Pi$ is
${\rm Gal}(\mathbb{F}_{q^\ell}/\mathbb{F}_q)$
equivariant, i.e., $\Pi\simeq \Pi^\sigma$.  In the article \cite[Theorem
1]{Shintani_BC}, Shintani proved that for a suitable choice of
isomorphism $I_\sigma:\Pi\rightarrow \Pi^\sigma$, there exists an $\ell$-adic
irreducible representation $\pi$ of ${\rm GL}_n(\mathbb{F}_q)$ such that
\begin{equation}\label{BC_Char_formula}
{\rm Tr}(I_\sigma\circ \Pi(g)) = 
{\rm Tr}\big(\pi({\rm Norm}_{\mathbb{F}_{q^\ell}/\mathbb{F}_q}(g))\big),\ 
\forall g\in {\rm GL}_n(\mathbb{F}_{q^\ell}),
\end{equation}
where ${\rm Norm}_{\mathbb{F}_{q^\ell}/\mathbb{F}_q}(g)$ is the unique
conjugacy class of ${\rm GL}_n(\mathbb{F}_q)$ containing the element
$$g\sigma(g)\cdots\sigma^{\ell-1}(g).$$ The association $\Pi\mapsto \pi$
gives a bijection from the set of all isomorphism classes of
${\rm Gal}(\mathbb{F}_{q^\ell}/\mathbb{F}_q)$ equivariant irreducible $\ell$-adic
representation of ${\rm GL}_n(\mathbb{F}_{q^\ell})$ onto the set of
isomorphism classes of irreducible $\ell$-adic representations of
${\rm GL}_n(\mathbb{F}_q)$. The inverse of this bijection is called the
Shintani base change lifting, denoted by
${\rm bc}_{\mathbb{F}_{q^\ell}/\mathbb{F}_q}$.
\subsection{Congruence of characters}
In this subsection, we prove certain congruences between the character
of a cuspidal representation and the character of its Shintani lifting
to ${\rm GL}_n(\mathbb{F}_{q^\ell})$, when evaluated at an element of
${\rm GL}_n(\mathbb{F}_q)$. We follow the notations of the preceding
subsection \ref{BC_finite_groups}, and let $F$ be the Frobenius
endomorphism of ${\rm GL}_n/\mathbb{F}_q$ induced by the Frobenius
automorphism
$\overline{\mathbb{F}}_q\rightarrow
\overline{\mathbb{F}}_q$,\,\,$x\mapsto x^q$. We begin with the
following preliminary lemma.  For any representation $\tau$ of
${\rm GL}_n(\mathbb{F}_{q^\ell})$, we denote by $\tau^F$ the
composition of the representation $\tau$ with the automorphism of
${\rm GL}_n(\mathbb{F}_{q^\ell})$ induced by $\sigma$. For any representation
$\rho$, we denote by $\chi_\rho$ its character. 
\begin{lemma}\label{char_iden_2}
  Let $\tau$ be an irreducible $\ell$-adic representation of
  ${\rm GL}_n(\mathbb{F}_{q^\ell})$. Let $\sigma$ be the Frobenius
  element in ${\rm Gal}(\mathbb{F}_{q^\ell}/\mathbb{F}_q)$, and let
  $F$ be the Frobenius endomorphism of ${\rm GL}_n/\mathbb{F}_q$.  Let
\begin{align*}
\eta=&\tau\otimes\tau^F\otimes \cdots\otimes
\tau^{F^{\ell-1}},\\
\iota(g)=& (g, g^F,\dots, g^{F^{\ell-1}}).
\end{align*} 
Then, we have
$$ \sum_{w\in S_l} 
\chi^{(w)}_\eta(\iota(g))
\equiv \sum_{i=0}^{\ell-1}
\chi_\tau(g^{F^i})^\ell\ \text{mod}\ (\ell),$$ for all
$g\in {\rm GL}_m(\mathbb{F}_{q^\ell})$, where $S_\ell$ is permutation group
of $\ell$ symbols.
\end{lemma}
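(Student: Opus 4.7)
The plan is to compute each term $\chi_\eta^{(w)}(\iota(g))$ explicitly, recognize the sum over $S_\ell$ as a partial multinomial expansion, and then use a free $\mathbb{Z}/\ell$-action to discard the remaining terms modulo $\ell$.

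First I would unwind the outer tensor product. For $w \in S_\ell$ acting on $\iota(g)$ by permuting entries, the $k$-th entry of $w \cdot \iota(g)$ is $g^{F^{w^{-1}(k)}}$, so that
\[
\eta(w \cdot \iota(g)) \;=\; \bigotimes_{k=0}^{\ell-1} \tau^{F^k}\bigl(g^{F^{w^{-1}(k)}}\bigr) \;=\; \bigotimes_{k=0}^{\ell-1} \tau\bigl(g^{F^{w^{-1}(k) + k}}\bigr),
\]
with exponents read modulo $\ell$. Writing $c_j := \chi_\tau(g^{F^j})$ for $0 \le j \le \ell - 1$, taking traces gives
\[
\chi_\eta^{(w)}(\iota(g)) \;=\; \prod_{k=0}^{\ell-1} c_{w^{-1}(k) + k \bmod \ell}.
\]

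Next I would encode the sum over $w \in S_\ell$ as a sum over functions $f : \mathbb{Z}/\ell \to \mathbb{Z}/\ell$ via $f_w(k) := w^{-1}(k) + k$. The map $w \mapsto f_w$ is a bijection from $S_\ell$ onto the \emph{good} functions, namely those for which $k \mapsto f(k) - k$ is itself a bijection (and the inverse recovers $w^{-1}(k) = f(k)-k$). Therefore
\[
\sum_{w \in S_\ell} \chi_\eta^{(w)}(\iota(g)) \;=\; \sum_{f \text{ good}} \prod_k c_{f(k)},
\]
whereas the full multinomial expansion gives $(c_0 + \cdots + c_{\ell-1})^\ell = \sum_{\text{all } f} \prod_k c_{f(k)}$. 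Subtracting, it suffices to show that the sum over \emph{bad} functions is divisible by $\ell$.

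The key observation is that $\mathbb{Z}/\ell$ acts on functions by $(c \cdot f)(k) := f(k + c)$; this action preserves the monomial $\prod_k c_{f(k)}$ (the product is symmetric in $k$), and it preserves the good/bad dichotomy because $k \mapsto f(k+c) - k$ differs from $k \mapsto f(k) - k$ by pre- and post-composition with shifts of $\mathbb{Z}/\ell$. The fixed points of this $\mathbb{Z}/\ell$-action are exactly the constant functions, and every constant $f \equiv C$ is good (since $k \mapsto C - k$ is a bijection). Hence $\mathbb{Z}/\ell$ acts \emph{freely} on bad functions, every orbit contributes $\ell$ identical monomials, and so the bad sum is divisible by $\ell$. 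Combined with the Frobenius identity $(c_0 + \cdots + c_{\ell-1})^\ell \equiv c_0^\ell + \cdots + c_{\ell-1}^\ell \pmod{\ell}$, this yields the claim. The main care lies in the first step---tracking indices correctly so that the twist $\tau^{F^k}$ combines cleanly with the permuted entry $g^{F^{w^{-1}(k)}}$---after which the modular reduction is a clean orbit-counting argument.
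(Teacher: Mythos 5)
Your proof is correct, and the engine driving it is the same as the paper's: a $\mathbb{Z}/\ell$-action on functions $\mathbb{Z}/\ell\to\mathbb{Z}/\ell$ whose only fixed points are the constants, so that the non-fixed orbits contribute multiples of $\ell$. The packaging is slightly different. The paper restricts attention to the $\ell!$ functions of the form $\mathrm{id}+w$ (your ``good'' functions), identifies the $w_0$-fixed ones as the $\ell$ constant functions, and observes that each constant $k$ contributes $\chi_\tau(g^{F^k})^\ell$ directly; the free action on the complement then kills everything else mod $\ell$, and no further identity is needed. You instead compare against the full multinomial expansion of $(c_0+\cdots+c_{\ell-1})^\ell$, use the free action to discard the ``bad'' functions (those whose difference with the identity is not a bijection), and finish with the Frobenius identity $(\sum c_i)^\ell\equiv\sum c_i^\ell\ (\mathrm{mod}\ \ell)$. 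Both routes are valid; yours is marginally longer because of the multinomial detour, while the paper's avoids invoking Frobenius by reading off the fixed-point contribution directly.
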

\begin{proof}
%Using the character formula for 
%induced representations of 
%finite groups, we have
%\begin{equation}\label{char_formula_1}
%\chi_\Pi(\iota(g)) = \sum_{w\in S_l} 
%\chi^{(w)}_\eta(\iota(g)),
%\end{equation}
%for $g\in G_m(\mathbb{F}_{q^l})$. 
%Here $S_l$ denotes the 
%permutation group on $l$ symbols and 
For any $w\in S_\ell$, we have the
following character identity
$$ \chi_\eta^{(w)}(g_0,\dots,g_{\ell-1}) = 
\chi_\eta\,(g_{w^{-1}(0)},
\dots,g_{w^{-1}(\ell-1)}) = 
\prod_{i=0}^{\ell-1}\chi_\tau^{F^i}
(g_{w^{-1}(i)}), $$
for all $g_i\in {\rm GL}_m(\mathbb{F}_{q^\ell})$. 
Then
\begin{equation}\label{char_formula_2}
\sum_{w\in S_\ell} 
\chi^{(w)}_\eta(\iota(g)) = \sum_{w\in S_\ell} 
\Big(\prod_{i=0}^{\ell-1}\,
\chi_\tau(g^{F^{i+w(i)}})\Big).
\end{equation}
Let $X$ denote the set of functions
$$\{{\rm id}+w:\mathbb{Z}/\ell\mathbb{Z}\rightarrow 
\mathbb{Z}/\ell\mathbb{Z}: w\in S_\ell\},$$ and let $w_0$ be
the $\ell$-cycle $(0,1,\dots,\ell-1)$. Note that the obvious 
action of $w_0$ on the space of functions 
$\{f:\mathbb{Z}/\ell\mathbb{Z}\rightarrow 
\mathbb{Z}/\ell\mathbb{Z}\}$ given by 
$$(w_0f)(x)=f(w_0(x))$$
preserves the set $X$; this can be seen from the following identity: 
$$ [w_0 ({\rm id}+ w)](i) = 
i+w'(i),$$ where $w'$ is the permutation of 
$\mathbb{Z}/\ell\mathbb{Z}$ defined by $w'=1+ww_0$. 
Now, for $w\in S_\ell$, the permutation ${\rm id}+w$
belongs to the fixed point set $X^{w_0}$ if and only if there exists
an unique integer $k\in \mathbb{Z}/\ell\mathbb{Z}$ such that
$$ i+w(i) = k,\ 
0\leq i\leq \ell-1.$$ Therefore, from the
identity (\ref{char_formula_2}), we get
\begin{equation}\label{char_formula_3}
\sum_{w\in S_\ell} 
\chi^{(w)}_\eta(\iota(g)) = \sum_{i=0}^{\ell-1}
\chi_\tau(g^{F^i})^\ell + 
\sum_{{\rm id}+w\in X\setminus X^{w_0}}
\Big(\prod_{i=0}^{\ell-1}\,
\chi_\tau(g^{F^{i+w(i)}})\Big).
\end{equation}
Since $\langle w_0\rangle$ acts freely on $X\setminus X^{w_0}$, there
exists a subset $U\subseteq
X\setminus X^{w_0}$ such that
$$ X\setminus X^{w_0} = 
\coprod_{r=0}^{\ell-1}w_0^rU. $$
Then we have
\begin{equation}\label{char_formula_4}
\sum_{{\rm id}+w\in X\setminus X^{w_0}}\Big(\prod_{i=0}^{\ell-1}\,
\chi_\tau(g^{F^{({\rm id}+w)(i)}})\Big) =
\sum_{r=0}^{\ell-1}\,\sum_{{\rm id}+w\in U}
\,\prod_{i=0}^{\ell-1}\chi_\tau
(g^{F^{[w_0^r({\rm id}+w)](i)}}).
\end{equation}
Recall that $[w_0^r({\rm id}+w)](i) = (i+r) + w(i+r)$,
$0\leq r \leq \ell-1$, and we have
$$ \prod_{i=0}^{\ell-1} \chi_\tau
(g^{F^{[w_0^r({\rm id}+w)](i)}}) = 
\prod_{i=0}^{\ell-1} \chi_\tau
(g^{F^{i+w(i)}}). $$
Using this identity to the equation (\ref{char_formula_4}), we get
$$ \sum_{{\rm id}+w\in X\setminus X^{w_0}}\Big(\prod_{i=0}^{\ell-1}\,
\chi_\tau(g^{F^{i+w(i)}})\Big) =
\ell\sum_{{\rm id}+w\in U}
\,\prod_{i=0}^{\ell-1}\chi_\tau
(g^{F^{i+w(i)}}). $$ Then it
follow from the equation 
\eqref{char_formula_3}, we get
$$ \sum_{w\in S_\ell}
\chi^{(w)}_\eta(\iota(g)) \equiv
\sum_{i=0}^{\ell-1}
\chi_\tau(g^{F^i})^\ell\ \text{mod}\  (\ell).$$
Hence the lemma.
\end{proof}
\subsection{}
Let $G\rtimes \Gamma$ be a finite group such that
$\Gamma\simeq \mathbb{Z}/\ell\mathbb{Z}$, and we fix a generator
$\sigma$ of $\Gamma$.  The following lemma, in certain cases, gives
the trace of $G^\sigma$ action on the Tate cohomology with respect to
the action of $\Gamma$.  This will be useful when we consider the Tate
cohomology of a lattice in Shintani lifts.
\begin{lemma}\label{char_iden_1}
  Let $G$ be a finite group with an automorphism $\sigma$ of order
  $\ell$. Let $\Gamma$ be the group generated by $\sigma$. Let $M$ be
  a $\Lambda[G\rtimes \Gamma]$-module such that $M$ is a free
  $\Lambda$-module of finite rank. Assume that
  $\widehat{H}^1(\Gamma, M) =0$. Then
$$ {\rm Tr}(g|\widehat{H}^0(\Gamma, M))=\overline{{\rm Tr}(g|M)}, $$ 
for all $g\in G^\sigma$.
\end{lemma}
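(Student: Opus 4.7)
The plan is to use the Diederichsen classification of $\Lambda[\Gamma]$-lattices to pin down the structure of $M$ under the vanishing hypothesis, and then to perform an elementary block-trace computation.

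The Diederichsen--Heller--Reiner classification states that the indecomposable $\Lambda$-free $\Lambda[\Gamma]$-modules are $\Lambda$ (with trivial action), $\Lambda[\Gamma]$, and the augmentation ideal $I \subset \Lambda[\Gamma]$; this extends verbatim to $\Lambda = W(\overline{\mathbb{F}}_\ell)$ because $\Lambda$ is a complete discrete valuation ring of residue characteristic $\ell$ not containing $\zeta_\ell$ (as $\mathbb{Q}_\ell(\zeta_\ell)/\mathbb{Q}_\ell$ is totally ramified). A direct computation via the long exact Tate sequence attached to $0 \to I \to \Lambda[\Gamma] \to \Lambda \to 0$ gives $\widehat{H}^1(\Gamma, \Lambda) = \widehat{H}^1(\Gamma, \Lambda[\Gamma]) = 0$ but $\widehat{H}^1(\Gamma, I) \simeq \overline{\mathbb{F}}_\ell \neq 0$. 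Combined with Krull--Schmidt, the hypothesis $\widehat{H}^1(\Gamma, M) = 0$ therefore forces $M \simeq \Lambda^a \oplus \Lambda[\Gamma]^b$ as a $\Lambda[\Gamma]$-module for some $a, b \geq 0$; I write $M = M_1 \oplus M_2$ accordingly.

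Since $g$ commutes with $\Gamma$, it is a $\Lambda[\Gamma]$-linear endomorphism of $M$, so has a block matrix form $g = (g_{ij})$ with $g_{ij} \colon M_j \to M_i$ all $\Lambda[\Gamma]$-linear. Trace additivity gives $\operatorname{Tr}_\Lambda(g \mid M) = \operatorname{Tr}_\Lambda(g_{11} \mid M_1) + \operatorname{Tr}_\Lambda(g_{22} \mid M_2)$. The key observation is that $\operatorname{Tr}_\Lambda(g_{22} \mid \Lambda[\Gamma]^b) \in \ell\Lambda$: identifying $\operatorname{End}_{\Lambda[\Gamma]}(\Lambda[\Gamma]^b) = M_b(\Lambda[\Gamma])$ (with $\Lambda[\Gamma]$ acting on itself by right multiplication), the $\Lambda$-trace reduces to the sum of the diagonal entries' traces $\sum_i \operatorname{Tr}_\Lambda(x_{ii} \cdot \mid \Lambda[\Gamma])$; and right multiplication by any $\sigma^j$ on the basis $\{1, \sigma, \ldots, \sigma^{\ell-1}\}$ is a cyclic permutation of trace $\ell \cdot \delta_{j,0} \in \ell\Lambda$, so by $\Lambda$-linearity the claim follows. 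Hence $\overline{\operatorname{Tr}(g \mid M)} = \overline{\operatorname{Tr}(g_{11} \mid M_1)}$ in $\overline{\mathbb{F}}_\ell$.

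On the other hand, the projectivity of $\Lambda[\Gamma]$ gives $\widehat{H}^i(\Gamma, M_2) = 0$ for all $i$, so that $\widehat{H}^0(\Gamma, M) = \widehat{H}^0(\Gamma, M_1) = M_1/\ell M_1 \simeq \overline{\mathbb{F}}_\ell^a$ canonically. By functoriality, $g_{12}$, $g_{21}$, and $g_{22}$ all induce the zero endomorphism on $\widehat{H}^0$, and $g$ acts through the reduction of $g_{11}$ only. Therefore $\operatorname{Tr}(g \mid \widehat{H}^0(\Gamma, M)) = \overline{\operatorname{Tr}(g_{11} \mid M_1)} = \overline{\operatorname{Tr}(g \mid M)}$, as required. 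The principal obstacle is the first step---invoking the classification of $\Lambda[\Gamma]$-lattices in the setting $\Lambda = W(\overline{\mathbb{F}}_\ell)$ rather than the original $\mathbb{Z}_\ell$---but this is routine given that $\Lambda$ is a complete DVR with the requisite properties.
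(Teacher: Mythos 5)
Your proof is correct and follows essentially the same strategy as the paper's: both rest on the Krull--Schmidt decomposition of $M$ into indecomposable $\Gamma$-lattices, the observation that $\widehat{H}^1(\Gamma,M)=0$ rules out the augmentation-ideal summands (leaving only trivial and regular pieces), and a block-trace computation showing the regular part contributes $0$ mod $\ell$ while the trivial part reduces to the action on $\widehat{H}^0$. The only cosmetic difference is that the paper first descends $M$ to a finite unramified extension $\mathfrak{o}_K$ of $\mathbb{Z}_\ell$ and then decomposes over $\mathbb{Z}_\ell[\Gamma]$, whereas you invoke the Diederichsen--Heller--Reiner classification directly over $\Lambda[\Gamma]$; both are legitimate, since $\Lambda=W(\overline{\mathbb{F}}_\ell)$ is a complete DVR not containing $\zeta_\ell$, so the three indecomposables $\Lambda$, $\Lambda[\Gamma]$, $I$ persist. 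Your write-up is in fact somewhat more careful than the paper's on two points that are glossed over there: (i) the off-diagonal blocks $g_{12},g_{21}$ of a $\Lambda[\Gamma]$-linear endomorphism need not vanish (there are nonzero $\Lambda[\Gamma]$-maps $\Lambda\to\Lambda[\Gamma]$ and back), so the argument really does need the observation that only the diagonal blocks contribute to both the trace and the induced map on $\widehat{H}^0$; and (ii) the trace of right multiplication by $x=\sum a_j\sigma^j$ on $\Lambda[\Gamma]$ equals $\ell a_0$, which the paper phrases loosely as $q_igp_i$ having ``a unique eigenvalue $\lambda$'' with trace $\ell\lambda$. Your version makes both of these precise.
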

\begin{proof}
  This lemma was observed by Ronchetti (\cite[Theorem
  12]{Ronchetti_BC}).  Since, this lemma is used in a crucial way, we
  give a proof.  We may assume that there is a finite unramified
  extension $K$ of $\mathbb{Q}_\ell$ and a
  $\mathfrak{o}_K[G\rtimes \Gamma]$ module $M_0$ such that $M_0$ is a
  free $\mathfrak{o}_K$ module and
  $M=M_0\otimes_{\mathfrak{o}_K} \Lambda$.  Assume that
  $M_0=\oplus_{i=1}^kN_i$, where $N_i$ is an indecomposable
  $\mathbb{Z}_\ell[\Gamma]$ submodule of $M$. Since
  $\widehat{H}^1(\Gamma, M)=0$, we get that $N_i$ is isomorphic to
  $\mathbb{Z}_\ell$ or $\mathbb{Z}_\ell[\Gamma]$.  Let
  $p_i:N_i\rightarrow M$ and $q_i:M\rightarrow N_i$ be the embedding
  and the projection maps, respectively.  Note that
  $${\rm Tr}(g|M)=\sum_{i=1}^k{\rm Tr}(q_igp_i : N_i\rightarrow
  N_i).$$ The action of $q_igp_i$ on $N_i$ has a unique
  eigenvalue. When $N_i=\mathbb{Z}_\ell$, the element $q_igp_i$ acts
  by an unit $\xi\in \mathbb{Z}_\ell^\ast$ whose mod-$\ell$ reduction
  $\overline{\xi}$ coincides with the action of $\xi$ on the Tate
  cohomology space
  $\widehat{H}^0(\Gamma, \mathbb{Z}_\ell) =
  \mathbb{Z}/\ell\mathbb{Z}$. Let us consider the case where
  $N_i=\mathbb{Z}_\ell[\Gamma]$. Note that
  $\widehat{H}^0(\Gamma, N_i)=0$. If $\lambda$ is the unique
  eigenvalue of $q_igp_i$ on ${N_i}$, then
  ${\rm Tr}(q_igp_i) = \ell.\lambda$. Thus,
$$ \overline{{\rm Tr}(g|_{M})} = 0 = {\rm Tr}(g|_{\widehat{H}^0(\Gamma, M)}). $$
Hence the lemma.
%Thus, ${\rm Tr}(g|M)$ is the sum of ${\rm Tr}(g|W)$,
%where $W$ is a $g$-stable indecomposable subspace.
\end{proof}	
\begin{proposition}\label{char_iden_3}
  Let $\Pi$ be an absolutely irreducible generic
  $\mathcal{K}$ representation of ${\rm GL}_n(\mathbb{F}_{q^\ell})$
  such that $\Pi\simeq \Pi^\sigma$, for all
  $\sigma\in {\rm Gal}(\mathbb{F}_{q^\ell}/\mathbb{F}_q)$. Let $\pi$
  be the $\ell$-adic cuspidal representation of
  ${\rm GL}_{n}(\mathbb{F}_q)$ such that
  $\Pi\otimes_{\mathcal{K}} \overline{\mathbb{Q}}_\ell$ is the
  Shintani lift of $\pi$.  Let $\mathcal{L}$ be a
  $\Lambda[{\rm GL}_n(\mathbb{F}_{q^\ell})\rtimes {\rm
    Gal}(\mathbb{F}_{q^\ell}/ \mathbb{F}_q)]$ stable lattice in
  $\Pi$. Then we have
$${\rm Tr}(g|\widehat{H}^0({\rm Gal}(\mathbb{F}_{q^\ell}/\mathbb{F}_q), 
\mathcal{L})) = \overline{\chi_\pi(g)}^\ell,$$
for all $g\in {\rm GL}_n(\mathbb{F}_q)$. (For $x\in \Lambda$, the notation 
$\overline{x}$ denotes the mod-$\ell$ reduction of $x$.)
\end{proposition}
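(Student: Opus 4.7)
The plan is to apply Lemma \ref{char_iden_1} after verifying $\widehat{H}^1(\Gamma, \mathcal{L}) = 0$, which will reduce the computation of ${\rm Tr}(g \mid \widehat{H}^0(\Gamma, \mathcal{L}))$ to $\overline{\chi_\Pi(g)}$; a character congruence based on the Deligne--Lusztig formula then yields $\overline{\chi_\Pi(g)} = \overline{\chi_\pi(g)}^\ell$.

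First, I would verify $\widehat{H}^1(\Gamma, \mathcal{L}) = 0$. Under Shintani's classification, $\Pi$ is cuspidal on ${\rm GL}_n(\mathbb{F}_{q^\ell})$ when $\gcd(n, \ell) = 1$, and is parabolically induced from a cuspidal of the form $\tau \boxtimes \tau^\sigma \boxtimes \cdots \boxtimes \tau^{\sigma^{\ell-1}}$ (with $\tau \not\simeq \tau^\sigma$) when $\ell \mid n$. In the non-cuspidal case, the finite-field analogue of Theorem \ref{Tate_bc_cuspidal} yields the vanishing, since each Jordan--H\"older factor of the relevant Jacquet-module lattice is not $\sigma$-stable, making the summands projective over $\Lambda[\Gamma]$. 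In the cuspidal case, the vanishing follows from the classification of indecomposable $\Lambda[\Gamma]$-lattices ($\Lambda$, $\Lambda[\Gamma]$, $\Lambda[\zeta_\ell]$), ruling out cyclotomic summands via the Shintani character identity.

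Second, with $\widehat{H}^1(\Gamma, \mathcal{L}) = 0$ in hand, Lemma \ref{char_iden_1} gives ${\rm Tr}(g \mid \widehat{H}^0(\Gamma, \mathcal{L})) = \overline{\chi_\Pi(g)}$, and it remains to show $\overline{\chi_\Pi(g)} = \overline{\chi_\pi(g)}^\ell$ for $g \in {\rm GL}_n(\mathbb{F}_q)$. Write $g = su$ in Jordan decomposition and apply the Deligne--Lusztig character formula \eqref{DL_char_form_1} to both $\chi_\Pi$ and $\chi_\pi$. The key ingredients are: (i) by T. Springer's formula, Green functions are polynomials in $q$, so $Q_{H, T, F^\ell}(u) \equiv Q_{H, T, F}(u)^\ell \pmod{\ell}$ via Fermat's little theorem ($q^\ell \equiv q \pmod{\ell}$) and the freshman's dream; (ii) for $s \in T^F$, the composed character satisfies $\theta_\ell(wsw^{-1}) = \theta\bigl((wsw^{-1})^\ell\bigr) = \theta(wsw^{-1})^\ell$; (iii) by Lemma \ref{Conjugacy_maximal_torus}, the index sets of $F^\ell$-stable and $F$-stable maximal tori in $H = C_G(s)^0$ (conjugate to $T$ in the appropriate ambient group) match appropriately; (iv) the freshman's dream $(\sum_i a_i)^\ell \equiv \sum_i a_i^\ell \pmod{\ell}$.

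The principal obstacle is the case in which the characteristic polynomial of $s$ has an irreducible factor of degree divisible by $\ell$: here Lemma \ref{Conjugacy_maximal_torus}(3) does not directly apply, and the $F^\ell$-conjugacy classes of tori in $H$ split into Galois orbits of size $\ell$. To recognise the resulting contribution modulo $\ell$ as the $\ell$-th power of the corresponding sum for $\pi$, one invokes Lemma \ref{char_iden_2}: the permutation sum over $S_\ell$ there matches the Galois-orbit structure precisely, and the mixed terms get absorbed as multiples of $\ell$. Establishing $\widehat{H}^1 = 0$ in the cuspidal Shintani case is also technically subtle and may require a careful Brauer-character comparison in addition to the lattice classification.
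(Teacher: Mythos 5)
Your proposal follows the paper's strategy quite closely: use Lemma \ref{char_iden_1} after verifying $\widehat{H}^1(\Gamma,\mathcal{L})=0$ to reduce to the congruence $\overline{\chi_\Pi(g)} = \overline{\chi_\pi(g)}^\ell$, and then establish that congruence via the Deligne--Lusztig formula, Springer's polynomial expansion of the Green functions, Lemma \ref{Conjugacy_maximal_torus}, Lemma \ref{char_iden_2}, and the ``freshman's dream''. The paper does the same, so at the level of strategy this is the same proof.

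There are, however, two points where your sketch diverges or glosses over something the paper handles more carefully. First, your argument for $\widehat{H}^1(\Gamma,\mathcal{L})=0$ in the case $\gcd(n,\ell)=1$ (cuspidal Shintani lift) is different from the paper's and is, as you acknowledge, incomplete: you propose classifying indecomposable $\Lambda[\Gamma]$-lattices and ``ruling out cyclotomic summands via the Shintani character identity'', but you never actually rule them out. The paper's route (in the finite-group analogue of the arguments around Theorem \ref{Tate_bc_cuspidal}) is cleaner and does not require this: the injection of Theorem \ref{Jacquet_tate_injective} maps the $\Theta_F$-twisted Jacquet module of $\widehat{H}^1(\Gamma,\mathcal{L})$ into $\widehat{H}^1(\Gamma,\mathcal{L}_{N,\Theta_E})$; since $\mathcal{L}_{N,\Theta_E}$ is a rank-one free $\Lambda$-module with trivial $\Gamma$-action, this Tate cohomology vanishes, so $\widehat{H}^1(\Gamma,\mathcal{L})$ has no Whittaker model. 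Combined with Corollary \ref{Tate_cuspidal} (it is cuspidal) and the fact that nonzero cuspidal representations of $\mathrm{GL}_n$ over a finite field are generic, this forces $\widehat{H}^1(\Gamma,\mathcal{L})=0$.

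Second, your item (ii), namely $\eta(wsw^{-1}) = \theta((wsw^{-1})^\ell) = \theta(wsw^{-1})^\ell$, is an oversimplification that is not correct as stated. In the Deligne--Lusztig sum the Weyl representative $w$ ranges over $W(G,T_i)^{F^\ell}/W(H,T_i)^{F^\ell}$, so $wsw^{-1}$ lies in $T^{F^\ell}$ but need not be $F$-fixed; hence $\mathrm{Nr}(wsw^{-1}) = wsw^{-1}\cdot(wsw^{-1})^F\cdots(wsw^{-1})^{F^{\ell-1}}$ is \emph{not} simply $(wsw^{-1})^\ell$, and the paper's careful bookkeeping (through the set $Y$ of ``diagonal'' conjugates, and Lemma \ref{char_iden_2} which packages the permutation sum and the cross-terms divisible by $\ell$) is exactly what is needed to make this step rigorous, both when $\ell\mid d$ and when $\gcd(\ell,d)=1$. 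You also omit the reduction, used at the outset of the paper's proof, that since $\widehat{H}^0(\Gamma,\mathcal{L})$ is irreducible and cuspidal it suffices to verify the character identity on elements $g=su$ whose semisimple part has characteristic polynomial $f(X)^r$ for an irreducible $f$; without this restriction the Deligne--Lusztig computation does not even begin.
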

\begin{proof}
Let $\sigma$ be the Frobenius element in ${\rm Gal}(\mathbb{F}_{q^\ell}/
\mathbb{F}_q)$ and let $F$ be the corresponding Frobenius 
endomorphism of ${\rm GL}_n$.
We fix a ${\rm GL}_n(\mathbb{F}_{q^\ell})
\rtimes\langle F\rangle$ stable 
$\Lambda$-lattice $\mathcal{L}$ in $\Pi$. 
Since the Tate cohomology group 
$\widehat{H}^0({\rm Gal}(\mathbb{F}_{q^\ell}/
\mathbb{F}_q), \mathcal{L})$ is 
irreducible and cuspidal 
(Theorem \ref{Tate_bc_cuspidal}), 
it is sufficient to show that
\begin{equation}\label{congru_prop_trace}
    {\rm Tr}(g|{\widehat{H}^0({\rm Gal}(\mathbb{F}_{q^\ell}/
\mathbb{F}_q), \mathcal{L})}) = \overline{\chi_\pi(g)}^\ell,
\end{equation} 
for all $g$ with Jordan decomposition $g=su$, where the
characteristic polynomial of $s$ is of the form 
 $f(X)^r$ for some
irreducible polynomial $f(X)\in\mathbb{F}_q[X]$; let $\mathcal{S}$
be the set of all such elements $g\in {\rm GL}_n(\mathbb{F}_q)$. 
Since the Tate
cohomology group $\widehat{H}^1( {\rm Gal}(\mathbb{F}_{q^\ell}/
\mathbb{F}_q), \mathcal{L})$ is trivial (Theorem
\ref{Tate_bc_cuspidal}), it follows from Lemma \ref{char_iden_1} that
the equality \eqref{congru_prop_trace} is equivalent to the following identity
$$ \overline{\chi_\Pi(g)} = 
\overline{\chi_\pi(g)}^\ell,\, g\in \mathcal{S}.$$ 

We use Deligne--Lusztig character formulas for $\chi_\Pi$ and $\chi_\pi$. 
Let $S_0$ be an $F$-stable split maximal torus of ${\rm GL}_n$. 
Let $T$ be an $F$-stable maximal torus of ${\rm GL}_n$ such that 
$(T, F)$ is isomorphic to $(S_0, {\rm Ad}(w_0)\circ F)$, 
where $w_0$ is an $n$-cycle 
in $W({\rm GL}_n, S_0)$. Let
$\theta:T^F\rightarrow \mathcal{K}^\ast$
be a character in general position such that 
$\pi=(-1)^{n-1}R_{T, \theta}$. Let ${\rm Nr}:T^{F^\ell}\rightarrow T^F$
be the norm map 
$$t\mapsto tt^Ft^{F^2}\dots t^{F^{\ell-1}}.$$
Let $\eta$ be the composite 
$$T^{F^\ell}\xrightarrow{\rm Nr} T^F\xrightarrow{\theta}
\overline{\mathbb{Q}}_\ell^\ast.$$ The Shintani lifting of $\pi$ to
${\rm GL}_{n}(\mathbb{F}_{q^\ell})$ is equal to
$\Pi=(-1)^{n-\ell}R^{(\ell)}_{T,\eta}$ (see
\cite{Kawanaka_Shintani_lift} for a reference). Here
$R^{(\ell)}_{T, \eta}$ is the Deligne--Lusztig induction of the
character $\eta$ of $T^{F^\ell}$ to $G^{F^\ell}$.

First we assume that $n=\ell m$ for some integer $m$.
Note that $T^{F^\ell}$ is identified with
$(\mathbb{F}_{q^n}^\ast)^\ell$ and $T^F$ is identified with
$\mathbb{F}_{q^n}^\ast$ such that ${\rm Nr}$ is given by
$$(t_1, t_2,\dots, t_\ell)\mapsto t_1t_2^F
\dots t_\ell^{F^{\ell-1}},\ t_i\in \mathbb{F}_{q^n}^\ast.$$
Let $P$ be an $F^\ell$-stable parabolic subgroup containing $T$, and
let $M$ be an $F^\ell$-stable Levi subgroup of $P$ containing $T$.  We
can identify the group $M^{F^\ell}$ with
${\rm GL}_{n/\ell}(\mathbb{F}_{q^\ell})^\ell$ and
$\Pi=(-1)^{n-\ell}R^{(\ell)}_{T, \eta}$ with the parabolic induction
$$\tau\times\tau^{F}\times\cdots\times\tau^{F^{\ell-1}},$$
where $\tau$ is the irreducible cuspidal representation of 
${\rm GL}_{n/\ell}(\mathbb{F}_{q^\ell})$, obtained as
the Deligne--Lusztig induction of the character 
$\theta$ of $\mathbb{F}_{q^n}^\ast$ (which is 
considered as $F^{\ell}$-fixed points of 
a $F^\ell$-stable maximal torus of ${\rm GL}_{n/\ell}$
corresponding to an $n/\ell$-cycle).

Let $s\in T^F$ be an element with characteristic polynomial $f(X)^r$,
where $f(X)\in \mathbb{F}_q[X]$ is an irreducible polynomial of degree
$d$. We have $n=dr$. Let $H$ be the centralizer $C_{{\rm GL}_n}(s)$.
Let $u\in H^F$ be an unipotent element.  From the Deligne--Lusztig
character formula (\ref{DL_char_form_1}) and Lemma
\ref{Conjugacy_maximal_torus}, we get that
$$\chi_{\Pi}(su)=Q_{H, T, F^\ell}(u)
\sum_{w\in W(G, T)^{F^\ell}/W(H, T)^{F^\ell}}\eta(wsw^{-1}).$$
We begin with the case where $\ell\mid d$. 
Let $f=f_1f_2\cdots f_\ell$ be the
splitting of $f$ where $f_i(X)\in \mathbb{F}_{q^\ell}[X]$ is an 
irreducible polynomial. With the above identification, the 
element $s\in T^F\subset M^{F^\ell}$ is of the form 
$$s=(s_0,s_1,\dots, s_{\ell-1}), s_i\in \mathbb{F}_{q^{d/\ell}},\ 0\leq i\leq \ell-1,$$
where $s_i$, considered as an element of ${\rm GL}_{n/\ell}(\mathbb{F}_{q^\ell})$,
has characteristic polynomial $f_i(X)^r$. Thus, we have 
$$F(s_0)=s_1, F(s_1)=s_2,\dots, F(s_{\ell-1})=s_0.$$
Note that $H^{F^\ell}$ can be identified with 
${\rm GL}_r(\mathbb{F}_{q^d})^\ell$ and 
the element $u\in H^{F}\subset H^{F^\ell}$ with 
$$(u_1, u_2,\dots, u_\ell)$$
where $u_i$ is the unipotent element of
${\rm GL}_{r}(\mathbb{F}_{q^{d}})$ conjugate to $u$.  Thus we have
$$Q_{H, T, F^\ell}(u)=Q_{H, T, F}(u)^\ell.$$
Let $\gamma$ be the Frobenius element of
${\rm Gal}(\mathbb{F}_{q^d}/\mathbb{F}_{q^\ell})$, then the set of
conjugates
\begin{align*}
    &\{wsw^{-1}:w\in W(G, T)^{F^\ell}/W(H, T)^{F^\ell}\}\\
  =\{(\gamma^{k_0}&(s_{\alpha(0)}),\gamma^{k_1}(s_{\alpha(1)}),\dots,
                    \gamma^{k_{\ell-1}}(s_{\alpha(\ell-1)}) : 0\leq k_i\leq d/\ell,\, \alpha\in S_\ell
\}.
\end{align*}
We observe that 
\begin{align*}
   &\eta((\gamma^{k_0}(s_{\alpha(0)}),\gamma^{k_1}(s_{\alpha(1)}),\dots, 
\gamma^{k_{\ell-1}}(s_{\alpha(\ell-1)})))\\&=
\eta((\gamma^{k_1}(F(s_{\alpha(1)})),\gamma^{k_2}(F(s_{\alpha(2)})),\dots, 
\gamma^{k_{\ell-1}}(F(s_{\alpha(\ell-1)})), \gamma^{k_0}(F(s_{\alpha(0)})))). 
\end{align*}
Let $Y$ be the set 
$$\{(\gamma^{k}(s_{\alpha(0)}),\gamma^{k}(s_{\alpha(1)}),\dots, 
\gamma^k(s_{\alpha(\ell-1)}) : 0\leq k\leq d/\ell,\, \alpha\in S_\ell\}.$$
We deduce that 
$$\sum_{w\in W(G, T)^{F^\ell}/W(H, T)^{F^\ell}}\eta(wsw^{-1})\equiv
\sum_{wsw^{-1}\in \in Y}\eta(wsw^{-1})\ \text{mod}\ (\ell).$$
Using Lemma \ref{char_iden_2}, we get that 
$$Q_{H, T, F}(u)^\ell\sum_{wsw^{-1}\in \in Y}\eta(wsw^{-1})\equiv
Q_{H, T, F}(u)^\ell\sum_{0\leq i\leq d}\theta(s^{q^i})^{\ell}\ \text{mod}\ (\ell).$$
This shows the proposition in this case. 

We now assume that $(d, \ell)=1$. In this case, we have 
$$\chi_{\pi}(su)=Q_{H, T,F^\ell}(u)\sum_{w\in W(G, T)^{F^\ell}/W(H, T)^{F^\ell}}\eta(wsw^{-1}).$$
Note that 
$$Y=\{wsw^{-1}:w\in W(G, T)^{F^\ell}/W(H, T)^{F^\ell}\}=
\{(s^{F^{\alpha(0)}}, s^{F^{\alpha(1)}},\dots, 
s^{F^{\alpha(\ell-1)}}) : \alpha\in S_\ell\}.
$$
Using Lemma \ref{char_iden_2}, we get that 
$$\sum_{y\in Y}\eta(y)=\sum_{0\leq i\leq d}\theta(s^{q^i})^\ell\ \text{mod}\ (\ell).$$
We must now analyse the Green function, for which we first recall the following 
identifications. 
$$H^F={\rm GL}_r(\mathbb{F}_{q^d}), \ H^{F^\ell}={\rm
  GL}_{r}(\mathbb{F}_{q^{d\ell}}), T^F =\mathbb{F}_{q^n}^\ast,
T^{F^\ell}=(\mathbb{F}_{q^n}^\ast)^\ell.$$ Assume that
$\mu=(\mu_1, \mu_2,\dots, \mu_k)$ is the partition associated to the
conjugacy class of $u$ in $H^F$. Then $\mu$ is the partition of the
$H^{F^\ell}$ conjugacy class of $u$. The partition associated with
$T^F$ is $(n)$ and $(n/\ell, n/\ell,\dots, n/\ell)$ is the partition
of $T^{F^\ell}$.  Let $w\in S_n$ be an $n$-cycle. It follows from
Springer that
$$Q_{H, T, F}(u)=\sum_{i\geq 0}a_i(w, u)q^i,$$
and 
$$Q_{H, T, F^\ell}(u)=\sum_{i\geq 0}a_i(w^\ell, u)q^{i\ell},$$
where $g\mapsto a_i(g, u)$, for $g\in S_n$, is a character of a representation of 
$S_n$. Clearly, we have 
$$a_i(w, u)^\ell\equiv a_i(w^\ell, u)\ \text{mod}\ (\ell).$$
From which we get that
$$Q_{H, T, F^\ell}(u)\equiv Q_{H, T, F}(u)^\ell\ \text{mod}\ (\ell).$$
We thus conclude in all cases 
$$\chi_{\Pi}(su)\equiv \chi_\pi(su)^\ell\ \text{mod}\ (\ell).$$

In the case where $(\ell, n)=1$, the above proposition is proved by
Ronchetti (\cite[Theorem 13]{Ronchetti_BC}). For the uniformity of the
proof, we treat this case using our method. For $g\in \mathcal{S}$, we
have
$$R_{T, \eta}^{(\ell)}(g)=Q_{H, T, F^\ell}(u)
\sum_{w\in W(G, T)^{F^\ell}/W(H, T)^{F^\ell}}\theta(wsw^{-1})$$
and in this case, we have 
$$H^F={\rm GL}_r(\mathbb{F}_q),\ 
H^{F^\ell}={\rm GL}_r(\mathbb{F}_{q^{d\ell}}),\ 
T^F=\mathbb{F}_{q^\ell},\ T^{F^\ell}=\mathbb{F}_{q^{n\ell}}$$
and we have 
$$Q_{H, T, F^\ell}(u)=\sum_{i\geq 0}a_i(w, u)q^{i\ell}\equiv 
(\sum_{i\geq 0}a_i(w, u)q^i)^\ell\ \text{mod}\ (\ell).$$
Then we have 
$$\sum_{w\in W(G, T)^{F^\ell}/W(H, T)^{F^\ell}}
\theta(wsw^{-1})=
\sum_{i=1}^{d}\theta(s^{q^{i\ell}})\equiv 
[\sum_{i=1}^d\theta(s^{q^i})]^\ell\ \text{mod}\ (\ell)$$ 
\end{proof}
We now deduce our main theorem.
\begin{theorem}\label{TV_finite_field}
  Let $\Pi$ be an absolutely irreducible generic
  $\mathcal{K}$-representation of ${\rm
    GL}_n(\mathbb{F}_{q^\ell})$. Let $\pi$ be an irreducible
  $\ell$-adic cuspidal representation of ${\rm GL}_n(\mathbb{F}_q)$
  such that $\Pi\otimes_{\mathcal{K}} \overline{\mathbb{Q}}_\ell$ is
  the Shintani lift of $\pi$.  Then for any
  ${\rm GL}_n(\mathbb{F}_{q^\ell})\rtimes {\rm
    Gal}(\mathbb{F}_{q^\ell}/\mathbb{F}_q)$ stable $\Lambda$-lattice
  $\mathcal{L}$ in $\Pi$, the Tate cohomology
  $\widehat{H}^0({\rm
    Gal}(\mathbb{F}_{q^\ell}/\mathbb{F}_q),\mathcal{L})$ is isomorphic
  to the Frobenius twist $r_\ell(\pi)^{(\ell)}$.
\end{theorem}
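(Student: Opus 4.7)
The plan is to deduce the theorem from a direct character comparison, relying on two previously established results: the trace identity of Proposition \ref{char_iden_3} and the irreducibility statement of Theorem \ref{Tate_bc_cuspidal}.

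First I would compute the character of the Frobenius twist. Since $r_\ell(\pi)$ is by construction a semi-simple $\overline{\mathbb{F}}_\ell$-representation of ${\rm GL}_n(\mathbb{F}_q)$, twisting scalars by the Frobenius of $\overline{\mathbb{F}}_\ell$ raises every character value to the $\ell$-th power:
$$ \chi_{r_\ell(\pi)^{(\ell)}}(g) = \chi_{r_\ell(\pi)}(g)^\ell = \overline{\chi_\pi(g)}^\ell. $$
By Proposition \ref{char_iden_3}, the same formula describes the trace of $g$ on $\widehat{H}^0({\rm Gal}(\mathbb{F}_{q^\ell}/\mathbb{F}_q), \mathcal{L})$ for every $g \in {\rm GL}_n(\mathbb{F}_q)$. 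Hence the two $\overline{\mathbb{F}}_\ell[{\rm GL}_n(\mathbb{F}_q)]$-modules have identical trace functions.

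Next I would upgrade this equality of characters to an isomorphism of representations. Restricting the traces to the $\ell$-regular classes yields equal Brauer characters, and by the linear independence of irreducible Brauer characters, two semi-simple modular representations with the same Brauer character are isomorphic. The module $r_\ell(\pi)^{(\ell)}$ is semi-simple by construction, whereas $\widehat{H}^0({\rm Gal}(\mathbb{F}_{q^\ell}/\mathbb{F}_q), \mathcal{L})$ is irreducible by Theorem \ref{Tate_bc_cuspidal}. Combining these facts yields the desired isomorphism
$$ r_\ell(\pi)^{(\ell)} \simeq \widehat{H}^0({\rm Gal}(\mathbb{F}_{q^\ell}/\mathbb{F}_q), \mathcal{L}). $$

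The only subtlety I foresee concerns the scope of Theorem \ref{Tate_bc_cuspidal}, which is stated for $\Pi$ of the form (\ref{bc_noncuspidal}) and thereby directly covers only the case $\ell \mid n$. In the complementary case $(n, \ell) = 1$ the Shintani lift $\Pi$ is itself cuspidal, and I would supply the required irreducibility by combining Corollary \ref{Tate_cuspidal} (which gives cuspidality of $\widehat{H}^0$) with Theorem \ref{Tate_generic} (which pins down a unique generic subquotient); the vanishing of $\widehat{H}^1$ then follows from the same Jacquet-module injection as in Theorem \ref{Jacquet_tate_injective}, precisely as in the argument of Theorem \ref{Tate_bc_cuspidal}.
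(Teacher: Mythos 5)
Your proof is essentially the paper's own: obtain the character identity from Proposition \ref{char_iden_3}, observe that both $\widehat{H}^0({\rm Gal}(\mathbb{F}_{q^\ell}/\mathbb{F}_q),\mathcal{L})$ and $r_\ell(\pi)^{(\ell)}$ are irreducible, and conclude from the fact (the paper cites Lam, Corollary 7.21) that irreducible $\ell$-modular representations with equal trace functions are isomorphic. One small inaccuracy in your middle step: equal $\overline{\mathbb{F}}_\ell$-valued traces on $\ell$-regular classes do not by themselves yield equality of the $\overline{\mathbb{Q}}_\ell$-valued Brauer characters; the statement that actually does the work, and that the paper invokes, is the linear independence over $\overline{\mathbb{F}}_\ell$ of the modular characters of non-isomorphic simple $\overline{\mathbb{F}}_\ell[{\rm GL}_n(\mathbb{F}_q)]$-modules, which for two irreducibles directly forces an isomorphism. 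Your closing remark---that Theorem \ref{Tate_bc_cuspidal} as stated covers only $\ell\mid n$ and that the case $\gcd(n,\ell)=1$ requires the parallel argument via Corollary \ref{Tate_cuspidal}, Theorem \ref{Tate_generic}, and the injection of Theorem \ref{Jacquet_tate_injective}---correctly identifies a point that the paper leaves implicit in its citation of Theorem \ref{Tate_bc_cuspidal} within the proof of Proposition \ref{char_iden_3}.
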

\begin{proof}
  To prove the theorem, we use the fact that two irreducible
  $\ell$-modular representations of a finite group are isomorphic if
  and only if they have the same character (\cite[Corollary
  7.21]{Lam_noncommutative_rings}). Using Proposition
  \ref{char_iden_3}, we get the character identity
$$ {\rm Tr}(g|_{\widehat{H}^0({\rm
    Gal}(\mathbb{F}_{q^\ell}/\mathbb{F}_q),\mathcal{L})})
= \overline{\chi_\pi(g)}^\ell, $$
for all $g\in {\rm GL}_n(\mathbb{F}_q)$. Note that the Tate cohomology
group
$\widehat{H}^0({\rm
  Gal}(\mathbb{F}_{q^\ell}/\mathbb{F}_q),\mathcal{L})$ and the
Frobenius twist $r_\ell(\pi)^{(\ell)}$ are both irreducible. Thus, the
Tate cohomology
$\widehat{H}^0({\rm
  Gal}(\mathbb{F}_{q^\ell}/\mathbb{F}_q),\mathcal{L})$ is isomorphic
to $r_\ell(\pi)^{(\ell)}$ as ${\rm GL}_n(\mathbb{F}_q)$
representations. Hence the theorem.
\end{proof}

\section{Lifting to 
\texorpdfstring{$p$}{}-adic groups}
In this section, we give an application of Theorem
\ref{TV_finite_field} to the depth zero case of irreducible smooth
representations of ${\rm GL}_n(F)$, where $F$ is a non-Archimedean
local field.
\subsection{Local base change lifting}
For a non-Archimedean local field $K$, let $\mathcal{A}(n,K)$ be the
set of isomorphism classes of irreducible $\ell$-adic smooth
representations of ${\rm GL}_n(K)$.  Let $\mathcal{G}(n,K)$ be the set
of isomorphism classes of semisimple $n$-dimensional $\ell$-adic
Weil--Deligne representations of the Weil group $\mathcal{W}_K$. The
$\ell$-adic local Langlands correspondence is a bijection
$$ {\rm LLC}_K:\mathcal{A}(n,K) 
\rightarrow \mathcal{G}(n,K), $$ which is uniquely characterized by
certain identities of local constants that one attaches to the
elements of $\mathcal{A}(n, K)$ and $\mathcal{G}(n,K)$ (see
\cite{harris_taylor}, \cite{henniart_une_preuve},
\cite{scholze_llc}). Moreover, the set of irreducible $\ell$-adic
cuspidal representations of ${\rm GL}_n(K)$ are mapped onto the set of
$n$-dimensional irreducible $\ell$-adic representations of
$\mathcal{W}_K$ via ${\rm LLC}_K$. Note that the classical local
Langlands correspondence is a bijection between the set of isomorphism
classes of irreducible smooth complex representations of
${\rm GL}_n(K)$ and the set of isomorphism classes of $n$-dimensional complex
semisimple Weil--Deligne representations. To get a correspondence over
$\overline{\mathbb{Q}}_\ell$, one twists the original correspondence
by the character $\nu^{(1-n)/2}$, where $\nu$ is the composition of
the determinant character of ${\rm GL}_n(K)$ and the absolute value of
$K$. For details, see \cite[Conjecture 4.4, Section
4.2]{clozel_motifs} and \cite[Section 7]{henniart_bordeaux}.

Say $F$ be a non-Archimedean local field, and let $E/F$ be a finite
Galois extension such that $[E:F]=\ell$.  Let $\pi$ be an irreducible
$\ell$-adic representation of ${\rm GL}_n(F)$. Let $\Pi$ be an
irreducible $\ell$-adic representation of ${\rm GL}_n(E)$ such that
\begin{center}
  ${\rm res}_{\mathcal{W}_E} \big({\rm LLC}_F(\pi)\big) \simeq {\rm
    LLC}_E(\Pi)$.
\end{center}
The representation $\Pi$ is called the base change lifting of
$\pi$, and we write $\Pi = {\rm bc}_{E/F}(\pi)$.  In this case, $\Pi$ is
isomorphic to $\Pi^\sigma$ for all $\sigma \in{\rm Gal}(E/F)$.
\subsection{}\label{comp_LLC_Shintani}
In this part, we discuss the compatibility of the local base change
and Shintani lift.
Let $K_0(F)$ be the compact open subgroup ${\rm GL}_n(\mathfrak{o}_F)$, and
let $K_1(F)$ be the pro-$p$ subgroup
$$ {\rm Id}_n + \varpi_FM_n(\mathfrak{o}_F), $$
where $M_n(\mathfrak{o}_F)$ is the ring of $n\times n$ matrices with
entries in the ring of integers $\mathfrak{o}_F$, and $\varpi_F$ is a
uniformiser of $F$. The quotient group $K_0(F)/K_1(F)$ is isomorphic
to ${\rm GL}_n(\mathbb{F}_q)$, where $\mathbb{F}_q$ is the residue
field of $F$.  An irreducible smooth representation $(\pi,V)$ of
${\rm GL}_n(F)$, where $V$ is a vector space over
$\overline{\mathbb{Q}}_\ell$ or $\mathcal{K}$ or
$\overline{\mathbb{F}}_\ell$, is said be of depth-zero if there exists
a non-zero vector in $V$ that is fixed under the action of $K_1(F)$.
 Let $\mathcal{A}(n,F)_0$ be the set of isomorphism
classes of irreducible smooth depth-zero $\ell$-adic representations
of ${\rm GL}_n(F)$, and let $\mathcal{A}(n,\mathbb{F}_q)$ be the set
of isomorphism classes of irreducible $\ell$-adic representations of
${\rm GL}_n(\mathbb{F}_q)$. In the article \cite[Section A.1.3]{Zink_Schneider},
Schneider--Zink defined a map
$$ \varphi_0^F : \mathcal{A}(n,F)_0
\longrightarrow \mathcal{A}(n,\mathbb{F}_q). $$ 
Let $E$ be a finite unramified extension of $F$ with $[E:F]=\ell$.
In \cite[Appendix A, equation (A.7)]{Zink_Silberger_Shintani_LLC},
Zink--Silberger proved the following identity
\begin{equation}\label{Comp_llc_shintani}
\varphi_0^E \circ {\rm bc}_{E/F}
= {\rm bc}_{\mathbb{F}_{q^\ell}/\mathbb{F}_q} 
\circ \varphi_0^F,
\end{equation}
which shows the compatibility of local base change with Shintani lifting.

Let $\Gamma$ be the Galois group ${\rm Gal}(E/F)$ and $\sigma$ be a
generator of ${\rm Gal}(E/F)$. Let $(\Pi,V)$ be an absolutely
irreducible depth-zero integral generic $\mathcal{K}$ representation
of ${\rm GL}_n(E)$ such that $\Pi\simeq\Pi^\sigma$. Let $(\pi,W)$ be
the irreducible depth-zero integral $\ell$-adic cuspidal
representation of ${\rm GL}_n(F)$ such that
$\Pi\otimes_{\mathcal{K}}\overline {\mathbb{Q}}_\ell$ is the base
change lift of $\pi$. From the definition, we have
$\varphi_0^E (\Pi) = \Pi^{K_1(E)}$ and
$\varphi_0^F(\pi) = \pi^{K_1(F)}$. Using the identity
(\ref{Comp_llc_shintani}), we get that the irreducible
${\rm GL}_n(\mathbb{F}_{q^\ell})$ representation
$\Pi^{K_1(E)}\otimes_{\mathcal{K}} \overline{\mathbb{Q}}_\ell$ is the
Shintani lift of the irreducible $\ell$-adic cuspidal representation
$\pi^{K_1(F)}$ of ${\rm GL}_n(\mathbb{F}_{q})$. Using these
observations, we now prove the following theorem.
\begin{theorem}
  Let $F$ be a non-Archimedean local field with residue characteristic
  $p$, and let $E/F$ be a finite unramified Galois extension with
  $[E:F]=\ell$, where $\ell$ and $p$ are distinct odd primes. Let
  $\Pi$ be a depth-zero, integral, generic,
  $\mathcal{K}$-representation of ${\rm GL}_n(F)$ such that $\Pi$ is
  absolutely irreducible and $\Pi\simeq \Pi^\sigma$. Let $\pi$ be the
  depth-zero, irreducible, integral, $\ell$-adic cuspidal
  representation of ${\rm GL}_n(F)$ such that
  $\Pi\otimes_{\mathcal{K}} \overline{\mathbb{Q}}_\ell$ is the base
  change lifting of $\pi$. Then
\begin{enumerate}
\item For any ${\rm GL}_n(E)\rtimes {\rm Gal}(E/F)$ stable
  $\Lambda$-lattice $\mathcal{L}$ of $\Pi$, the Tate cohomology group
  $\widehat{H}^0({\rm Gal}(E/F), \mathcal{L})$ is isomorphic to the
  Frobenius twist $r_\ell(\pi)^{(\ell)}$.
\item If the mod-$\ell$ reduction $r_\ell(\Pi)$ is irreducible, then
$\widehat{H}^0({\rm Gal}(E/F), r_\ell(\Pi))$ is isomorphic to $r_\ell(\pi)^{(\ell)}$.
\end{enumerate}
\end{theorem}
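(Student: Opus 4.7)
The approach is to pass to $K_1(F)$-invariants and reduce to the finite residue field situation, where Theorem \ref{TV_finite_field} applies. Set $\Gamma={\rm Gal}(E/F)$. By Theorem \ref{intro_gln}, $\widehat{H}^0(\Gamma,\mathcal{L})$ is an irreducible cuspidal $\overline{\mathbb{F}}_\ell$-representation of $\mathrm{GL}_n(F)$ and $\widehat{H}^1(\Gamma,\mathcal{L})=0$. Since every depth-zero irreducible cuspidal $\overline{\mathbb{F}}_\ell$-representation of $\mathrm{GL}_n(F)$ is determined, up to isomorphism, by its $K_1(F)$-invariants as a $K_0(F)/K_1(F) \simeq \mathrm{GL}_n(\mathbb{F}_q)$-representation together with its central character on $Z(F)$, it suffices to match these two pieces of data for $\widehat{H}^0(\Gamma,\mathcal{L})$ and $r_\ell(\pi)^{(\ell)}$.

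The central step is the natural isomorphism
\begin{equation}\label{key_plan_iso}
\widehat{H}^0(\Gamma,\mathcal{L})^{K_1(F)} \ \simeq \ \widehat{H}^0\bigl({\rm Gal}(\mathbb{F}_{q^\ell}/\mathbb{F}_q),\,\mathcal{L}^{K_1(E)}\bigr).
\end{equation}
Because $E/F$ is unramified, $K_1(E)$ is a $\Gamma$-stable pro-$p$ subgroup with $K_1(E)^\Gamma = K_1(F)$, and since $p\neq \ell$ the averaging projector $e_{K_1(E)}$ onto $K_1(E)$-invariants is well defined on smooth $\Lambda$-modules and commutes with $\Gamma$. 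For surjectivity in \eqref{key_plan_iso}, given a $\Gamma$-fixed representative $m\in\mathcal{L}^{K_1(F)}$, the plan is to set $m' := e_{K_1(E)}(m) \in (\mathcal{L}^{K_1(E)})^\Gamma$ and to show that $m-m'\in \mathrm{Nr}(\mathcal{L})$ by decomposing $K_1(E)/K$ into $\Gamma$-orbits, for a $\Gamma$-stable open normal $K\subseteq K_1(F)$ fixing $m$: fixed orbits contribute elements $km-m \in \mathrm{Nr}(\mathcal{L})$ by the $K_1(F)$-invariance hypothesis on the class of $m$, while a free $\Gamma$-orbit attached to $k\in K_1(E)\setminus K_1(F)$ contributes $\sum_{i=0}^{\ell-1}(\sigma^i k\sigma^{-i})m - \ell m = \mathrm{Nr}(km) - \ell m \in \mathrm{Nr}(\mathcal{L})$, the last inclusion using $\ell m \in \mathrm{Nr}(\mathcal{L}^\Gamma)$. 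Division by the $p$-power $[K_1(E):K]$ is legitimate modulo $\mathrm{Nr}(\mathcal{L})$ since the target is annihilated by $\ell$. Injectivity is analogous by applying $e_{K_1(E)}$ to a norm preimage.

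With \eqref{key_plan_iso} established, $\mathcal{L}^{K_1(E)}$ is a $\mathrm{GL}_n(\mathbb{F}_{q^\ell}) \rtimes {\rm Gal}(\mathbb{F}_{q^\ell}/\mathbb{F}_q)$-stable $\Lambda$-lattice in $\Pi^{K_1(E)}$, and the compatibility \eqref{Comp_llc_shintani} recalled in Subsection \ref{comp_LLC_Shintani} identifies $\Pi^{K_1(E)}\otimes_{\mathcal{K}}\overline{\mathbb{Q}}_\ell$ with the Shintani lift of the cuspidal $\mathrm{GL}_n(\mathbb{F}_q)$-representation $\pi^{K_1(F)}$. Theorem \ref{TV_finite_field} then yields
\[
\widehat{H}^0\bigl({\rm Gal}(\mathbb{F}_{q^\ell}/\mathbb{F}_q),\,\mathcal{L}^{K_1(E)}\bigr) \ \simeq \ r_\ell(\pi^{K_1(F)})^{(\ell)} \ \simeq \ \bigl(r_\ell(\pi)^{(\ell)}\bigr)^{K_1(F)}
\]
as $\mathrm{GL}_n(\mathbb{F}_q)$-representations. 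The central characters on $Z(F)$ match because $\omega_\Pi|_{Z(F)}(z) = \omega_\pi(N_{E/F}(z)) = \omega_\pi(z)^\ell$, so $\widehat{H}^0(\Gamma,\mathcal{L})$ and $r_\ell(\pi)^{(\ell)}$ both carry the central character $\overline{\omega_\pi}^{\,\ell}$. This proves part~(1); part~(2) follows by the same route, substituting Remark \ref{Remark_Tate_bc_cuspidal} for Theorem \ref{intro_gln} to obtain the initial irreducibility and cuspidality of $\widehat{H}^0(\Gamma, r_\ell(\Pi))$. The main technical obstacle is the averaging argument underlying \eqref{key_plan_iso}: one must delicately track the interaction between the pro-$p$ idempotent $e_{K_1(E)}$, the norm map, and the fact that $\widehat{H}^0$ is annihilated by $\ell$, with the unramifiedness of $E/F$ entering crucially through $K_1(E)^\Gamma=K_1(F)$.
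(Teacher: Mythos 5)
Your proof follows essentially the same reduction as the paper: pass to $K_1$-invariants, use the Schneider--Zink compatibility (equation \eqref{Comp_llc_shintani}) to identify $\Pi^{K_1(E)}$ with the Shintani lift of $\pi^{K_1(F)}$, and invoke Theorem \ref{TV_finite_field}. The difference is in the mechanism of the key step. The paper decomposes $\mathcal{V}=\mathcal{V}^{K_1(E)}\oplus\mathcal{V}(K_1(E))$ into the $K_1(E)$-trivial isotypic piece and its complement (a $\Gamma$-equivariant direct sum, since $e_{K_1(E)}$ exists because $K_1(E)$ is pro-$p$ and $p\neq\ell$), applies additivity of Tate cohomology, and concludes via a containment: $\widehat{H}^0(\Gamma,\mathcal{V})^{K_1(F)}$ contains $(r_\ell(\pi)^{(\ell)})^{K_1(F)}$, which together with irreducibility and cuspidality of $\widehat{H}^0(\Gamma,\mathcal{V})$ pins down the representation. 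You instead establish the stronger statement that the restriction map is an \emph{isomorphism} onto $K_1(F)$-invariants by an orbit-counting computation with the idempotent $e_{K_1(E)}$; this is in effect re-deriving, by hand, the consequence of the $\Gamma$-equivariant direct sum decomposition. A small but genuine wrinkle in your version: the auxiliary subgroup $K$ fixing the lift $m$ cannot be taken \emph{inside} $K_1(F)$ (that would force $K$ to have infinite index in $K_1(E)$, since $K_1(F)$ is not open in $K_1(E)$); $K$ should be a $\Gamma$-stable open normal subgroup of $K_1(E)$, and to identify the $\Gamma$-fixed cosets of $K_1(E)/K$ with the image of $K_1(F)$ you also need $H^1(\Gamma,K)=0$, which holds because $K$ is pro-$p$ and $p\neq\ell$ but which you should invoke explicitly. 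Both issues are repairable. On the other hand, you are more careful than the paper about the last step: you explicitly verify that the central characters match, which is needed to deduce the isomorphism from the $K_1(F)$-invariants via the classification of depth-zero cuspidal $\ell$-modular representations, and which the paper leaves implicit.
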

\begin{proof}
  Let $\mathcal{V}$ be either the
  $\Lambda[{\rm GL}_n(E)\rtimes \Gamma]$ module $\mathcal{L}$
  or the $\overline{\mathbb{F}}_\ell[{\rm GL}_n(E) \rtimes\Gamma]$
  module $r_\ell(\Pi)$. Consider the compact open subgroup $K_1(E)$ of
  ${\rm GL}_n(E)$. We have the following decomposition
$$ \mathcal{V} = \mathcal{V}^{K_1(E)} \bigoplus \mathcal{V}(K_1(E)),$$
where $\mathcal{V}(K_1(E))=\oplus \mathcal{V}_\rho$, where $\rho$ is a
non-trivial representation of $K_1(E)$ and $\mathcal{V}_\rho$ is the
$\rho$ isotypic component in $\mathcal{V}$. The space
$\mathcal{V}(K_1(E))$ is stable under the action of ${\rm Gal}(E/F)$.
Using the compatibility of Tate cohomology with finite direct sums, we
get
\begin{equation}\label{direct_sum}
  \widehat{H}^0(\mathcal{V}) =
  \widehat{H}^0(\mathcal{V}^{K_1(E)}) \bigoplus
  \widehat{H}^0\big(\mathcal{V}(K_1(E))\big).
\end{equation}
Note that the ${\rm GL}_n(\mathbb{F}_{q^\ell})$ representation
$\Pi^{K_1(E)}\otimes_{\mathcal{K}} \overline{\mathbb{Q}}_\ell$ is the
Shintani lift of the irreducible cuspidal representation
$\pi^{K_1(F)}$ of ${\rm GL}_n(\mathbb{F}_q)$. Then it follows from Theorem
\ref{TV_finite_field} that $\widehat{H}^0(\mathcal{V}^{K_1(E)})$ is
${\rm GL}_n(\mathbb{F}_q)$-isomorphic to
$r_\ell(\pi^{K_1(F)})^{(\ell)}$. Since $\widehat{H}^0(\mathcal{V})$ is
irreducible and cuspidal (Theorem \ref{Tate_bc_cuspidal} and Remark
\ref{Remark_Tate_bc_cuspidal}), and the space
$\widehat{H}^0(\mathcal{V})^{K_1(F)}$ contains
$(r_\ell(\pi)^{(\ell)})^{K_1(F)}$, we get that
$\widehat{H}^0(\mathcal{V})$ is isomorphic to $r_\ell(\pi)^{(\ell)}$.
This completes the proof.
\end{proof}
\subsection{Tate cohomology of 
depth-zero generic representations} 
This part aims to prove Treumann--Venkatesh's conjecture for
$\ell$-adic integral, depth-zero, generic representations of
${\rm GL}_n(F)$ without the assumption on the prime $\ell$ made in
\cite{nadimpalli2024tate}, i.e., $\ell$ does not divide
$\lvert{\rm GL}_{n-1}(\mathbb{F}_q) \rvert$ for $n\geq 3$. We first
introduce some notations.
\subsubsection{Supercuspidal support}
Let $\lambda=(n_1,\dots,n_r)$ be a partition of $n$. Let $P_\lambda$
be the parabolic subgroup of ${\rm GL}_n$ consisting of block upper
triangular matrices of type $\lambda$. Let $M_\lambda$ be the Levi
subgroup of $P_\lambda$ which is isomorphic to
${\rm GL}_{n_1}\times\cdots \times {\rm GL}_{n_r}$. For each
$1\leq i\leq r$, let $\xi_i$ be an irreducible smooth supercuspidal
$R$-representation of ${\rm GL}_{n_i}(F)$, where $R$ denotes either
$\overline{\mathbb{Q}}_\ell$ or $\mathcal{K}$ or
$\overline{\mathbb{F}}_\ell$. Let $\xi$ be the $M_\lambda(F)$
representation $\xi_1\otimes\cdots\otimes\xi_r$. Let $\pi$ be a smooth
irreducible $R$-representation of ${\rm GL}_n(F)$. There exists a pair
$(M_\lambda(F),\xi)$ as defined above such that $\pi$ is a
sub-quotient of the normalised parabolic induction
$i_{P_\lambda(F)}^{{\rm GL}_n(F)}\xi$.  Moreover, the pair
$(M_\lambda(F),\xi)$ is unique up to ${\rm GL}_n(F)$-conjugacy and is
called the supercuspidal support of $\pi$.  When
$R=\overline{\mathbb{Q}}_\ell$, this follows from the Frobenius
reciprocity (see \cite{BZ_1} for instance).  When
$R=\overline{\mathbb{F}}_\ell$, this is a deep result, proved by
Vign\'eras (\cite{Vigneras_induced}).
\subsubsection{}
Let $\pi$ be an $\ell$-adic integral generic
representation of ${\rm GL}_n(F)$,
and let $(M(F),\xi)$ be the 
supercuspidal support of $\pi$. We assume that $\pi$ is a 
subrepresentation of
$i_{P(F)}^{{\rm GL}_n(F)}\xi$, where $P$ is 
a parabolic subgroup of ${\rm GL}_n$ whose 
Levi component is $M$. Let $\Pi$ be 
an absolutely irreducible,
integral, generic $\mathcal{K}$-representation 
of ${\rm GL}_n(E)$ such that
$\Pi\otimes_{\mathcal{K}}
\overline{\mathbb{Q}}_\ell$ is the base change
lift of $\pi$. Then $\Pi$ is a 
subrepresentation of
$i_{P(E)}^{{\rm GL}_n(E)}\eta$ with
$\eta = \eta_1\otimes\cdots\otimes\eta_r$, 
where $\eta_i$ is an
absolutely irreducible integral generic $\mathcal{K}$-representation
of ${\rm GL}_{n_i}(E)$ 
and
$\eta_i\otimes_{\mathcal{K}}
\overline{\mathbb{Q}}_\ell$ is the base
change lift of $\xi_i$ for each $1\leq i\leq r$.
\subsubsection{}
For each $1\leq i\leq r$, let $\mathcal{L}_i$ be a
$\Lambda[{\rm GL}_{n_i}(E)\rtimes \Gamma]$ stable lattice in
$\eta_i$. Let $\mathcal{L}$ be the module
$\mathcal{L}_1\otimes\cdots\otimes \mathcal{L}_r$, which is a
$M(E)\rtimes\Gamma$ stable lattice in $\eta$. By
\cite[Chapter I, Section 9.3]{Vigneras_modl_book},
$i_{P(E)}^{{\rm GL}_n(E)}\mathcal{L}$ is a ${\rm GL}_n(E)$ stable
$\Lambda$-lattice in $i_{P(E)}^{{\rm GL}_n(E)}\eta$--which is also
stable under the action of $\Gamma$ on $i_{P(E)}^{{\rm GL}_n(E)}\eta$,
defined by
$$ (\sigma f)(g) = f(\sigma^{-1}(g)) $$
for all $\sigma\in \Gamma$, $g\in {\rm GL}_n(E)$, and
$f\in i_{P(E)}^{{\rm GL}_n(E)}\eta$.  Let $\mathcal{L}'$ be the
$\Lambda[{\rm GL}_n(E)\rtimes\Gamma]$ stable lattice
$\Pi\cap i_{P(E)}^{{\rm GL}_n(E)} \mathcal{L}$ in $\Pi$.
\begin{theorem}
Let $\pi$ be a smooth, depth-zero,
integral, $\ell$-adic generic 
representation of ${\rm GL}_n(F)$, with
$J_\ell(\pi)$ the unique generic 
sub-quotient of the mod-$\ell$ reduction
of $\pi$. Let $\Pi$ be an 
absolutely irreducible, depth-zero,
integral, generic $\mathcal{K}$-representation 
of ${\rm GL}_n(E)$ such that
$\Pi\otimes_{\mathcal{K}} 
\overline{\mathbb{Q}}_\ell$ is the base
change lift of $\pi$. Then 
the Frobenius twist $J_\ell(\pi)^{(\ell)}$ is
the unique generic sub-quotient 
of the Tate cohomology group
$\widehat{H}^0({\rm Gal}(E/F), \mathcal{L}')$.
\end{theorem}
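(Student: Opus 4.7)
The plan is to identify $J_\ell(\pi)^{(\ell)}$ as the unique generic sub-quotient of $\widehat{H}^0(\Gamma, \mathcal{L}')$, writing $\Gamma = {\rm Gal}(E/F)$. Since $\Pi$ is generic and $\Gamma$-stable, Theorem \ref{Tate_generic} already guarantees that $\widehat{H}^0(\Gamma, \mathcal{L}')$ admits at most one generic sub-quotient, so the task is to exhibit $J_\ell(\pi)^{(\ell)}$ as a generic sub-quotient.

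First, I would apply the preceding depth-zero base-change theorem to each cuspidal component $\xi_i$: since $\eta_i$ is the base change lift of the depth-zero cuspidal $\xi_i$ and $\mathcal{L}_i$ is a $\Lambda[{\rm GL}_{n_i}(E) \rtimes \Gamma]$-stable lattice, one obtains
\[
\widehat{H}^0(\Gamma, \mathcal{L}_i) \simeq r_\ell(\xi_i)^{(\ell)}, \qquad \widehat{H}^1(\Gamma, \mathcal{L}_i) = 0.
\]
Chasing the long exact Tate cohomology sequences attached to $0 \to \mathcal{L}_i \xrightarrow{\ell} \mathcal{L}_i \to r_\ell(\mathcal{L}_i) \to 0$ on each factor (where vanishing of $\widehat{H}^1$ is crucial) yields a K\"unneth-style identification $\widehat{H}^0(\Gamma, \mathcal{L}) \simeq r_\ell(\xi)^{(\ell)}$ with $\xi = \xi_1 \otimes \cdots \otimes \xi_r$. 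Next, since $G/P$ is projective, parabolic induction coincides with compact induction, and the isomorphism \eqref{Tate_restriction_map} (applied with $H = P(E)$, using $P(E)^\sigma = P(F)$ and $(G(E)/P(E))^\sigma = G(F)/P(F)$, which hold since $E/F$ is unramified) gives
\[
\widehat{H}^0(\Gamma, i_{P(E)}^{G(E)} \mathcal{L}) \simeq i_{P(F)}^{G(F)} \widehat{H}^0(\Gamma, \mathcal{L}) \simeq \big(i_{P(F)}^{G(F)} r_\ell(\xi)\big)^{(\ell)},
\]
after checking that the normalization factor $\delta_P^{1/2}$ is handled compatibly (harmless since $\ell$ is odd and $\delta_{P(E)}|_{M(F)} = \delta_{P(F)}^\ell$). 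By definition of $J_\ell(\pi)$, the unique generic sub-quotient of the right-hand side is $J_\ell(\pi)^{(\ell)}$.

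Finally, I would transfer the identification to $\widehat{H}^0(\Gamma, \mathcal{L}')$ via the inclusion $\mathcal{L}' \hookrightarrow i_{P(E)}^{G(E)} \mathcal{L}$. The integral Whittaker functional on $i_{P(E)}^{G(E)} \mathcal{L}$ factors through $\Pi$, the unique generic sub-representation of $i_{P(E)}^{G(E)} \eta$, and after appropriate normalization its restriction to $\mathcal{L}'$ remains non-zero modulo $\ell$. Combined with the injectivity of $\widehat{H}^0(\Gamma, \mathcal{L}')_{U(F), \Theta_F} \hookrightarrow \widehat{H}^0(\Gamma, \mathcal{L}'_{U(E), \Theta_E})$ from Theorem \ref{Jacquet_tate_injective} and the uniqueness from Theorem \ref{Tate_generic}, this pins down $J_\ell(\pi)^{(\ell)}$ as the generic sub-quotient of $\widehat{H}^0(\Gamma, \mathcal{L}')$. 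The hardest step is this last transfer: verifying that the integral Whittaker functional on $\mathcal{L}' = \Pi \cap i_{P(E)}^{G(E)} \mathcal{L}$ does not vanish modulo $\ell$. The depth-zero hypothesis is essential here, as it allows one to analyze the Whittaker data on $K_1(E)$-fixed vectors via the explicit Shintani-correspondence lattice structure developed earlier in the paper.
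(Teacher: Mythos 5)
Your proposal follows essentially the same strategy as the paper: compute $\widehat{H}^0(\Gamma, i_{P(E)}^{G(E)}\mathcal{L})$ explicitly via the preceding depth-zero base-change theorem applied to each cuspidal factor $\xi_i$ together with the restriction isomorphism \eqref{Tate_restriction_map}, identify the unique generic sub-quotient there as $J_\ell(\pi)^{(\ell)}$ by matching supercuspidal supports, and transfer back to $\mathcal{L}'$ using the injection from Theorem \ref{Jacquet_tate_injective} and the uniqueness from Theorem \ref{Tate_generic}. The only genuine divergence is the transfer step. You correctly identify it as the crux, but propose to settle it by an analysis of integral Whittaker data on $K_1(E)$-fixed vectors, invoking depth-zero structure. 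The paper does not do this: it places the injections of Theorem \ref{Jacquet_tate_injective} in a commutative diagram linking $\widehat{H}^0(\mathcal{L}')$ and $\widehat{H}^0(i_{P(E)}^{G(E)}\mathcal{L})$ through their twisted Jacquet modules, and the key point is that $\mathcal{L}'_{N_n(E), \Theta_E}$ and $(i_{P(E)}^{G(E)}\mathcal{L})_{N_n(E),\Theta_E}$ are both free $\Lambda$-modules of rank one, with the induced map between them an isomorphism. This is purely a lattice-theoretic fact: since $\mathcal{L}' = \Pi \cap i_{P(E)}^{G(E)}\mathcal{L}$ and the twisted Jacquet functor is exact, the quotient lattice is a lattice in the degenerate quotient $i_{P(E)}^{G(E)}\eta/\Pi$, whose Whittaker line is zero, so $\mathcal{L}'$ is saturated in the Whittaker line of $i_{P(E)}^{G(E)}\mathcal{L}$ (this is where \cite[Proposition 1.4]{Dat_tempered} enters). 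No depth-zero input is required for the transfer; the depth-zero hypothesis is used only in the computation $\widehat{H}^0(\Gamma, \mathcal{L}_i) \simeq r_\ell(\xi_i)^{(\ell)}$ that you already invoke. Replacing your depth-zero Whittaker sketch with this lattice-saturation argument closes the gap and recovers the paper's proof; your remark on the modulus character normalization and on $(G(E)/P(E))^\sigma = G(F)/P(F)$ is a useful addition that the paper leaves implicit.
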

\begin{proof}
  Let $(M(F),\xi)$ be the pair such that $\pi$ is a subrepresentation
  of $i_{P(F)}^{{\rm GL}_n(F)}\xi$, where
  $$ M = {\rm GL}_{n_1}\times\cdots\times {\rm GL}_{n_r},\,\,\, \xi =
  \xi_1\otimes\cdots\otimes \xi_r, $$ and each $\xi_i$ is an
  irreducible, integral, $\ell$-adic cuspidal representation of
  ${\rm GL}_{n_i}(F)$. Now,
  $\Pi\otimes_{\mathcal{K}}\overline {\mathbb{Q}}_\ell$, being the
  base change lift of $\pi$, is a subrepresentation of
  $i_{P(E)}^{G_n(E)}\eta$ with
  $\eta = \eta_1\otimes\cdots\otimes \eta_r$, where each $\eta_i$ is
  an absolutely irreducible, integral, generic
  $\mathcal{K}$-representation of ${\rm GL}_{n_i}(E)$ such that
  $\eta_i\otimes_{\mathcal{K}} \overline{\mathbb{Q}}_\ell$ is the base
  change lift of $\xi_i$.  Then we have the following commutative
  diagram
$$
\xymatrixrowsep{0.2in}
\xymatrixcolsep{0.15in}
\xymatrix{
\widehat{H}^0(\mathcal{L}') \ar[dd]_{}  
\ar[rr]^{\varphi}  && \widehat{H}^0
(i_{P(E)}^{{\rm GL}_n(E)}\mathcal{L}) 
\ar[dd]^{} \\\\
\widehat{H}^0(\mathcal{L}')_
{N_n(F),\overline{\Theta}_F^\ell} 
\ar[dd]_{f_1} \ar[rr]^{}  && 
\widehat{H}^0
(i_{P(E)}^{{\rm GL}_n(E)}\mathcal{L})
_{N_n(F),\overline{\Theta}_F^\ell} 
\ar[dd]^{f_2} \\\\
\widehat{H}^0(\mathcal{L}'_{N_n(E),\Theta_E}) 
\ar[rr]_{} && \widehat{H}^0\big
((i_{P(E)}^{{\rm GL}_n(E)}\mathcal{L})
_{N_n(E),\Theta_E}\big)
}
$$
where $\varphi$ is a ${\rm GL}_n(F)$ equivariant map induced from the
injection
$\mathcal{L'}\hookrightarrow i_{P(E)}^{{\rm GL}_n(E)}\mathcal{L}$.
Note that the twisted Jacquet modules
$\mathcal{L}'_{N_n(E), \Theta_E}$ and
$(i_{P(E)}^{{\rm GL}_n(E)}\mathcal{L}) _{N_n(E),\Theta_E}$ are free
$\Lambda$-modules of rank $1$. Also note that the vertical maps $f_1$
and $f_2$ are non-zero and hence isomorphisms. Therefore, the map
$\varphi$ takes the generic subquotients of
$\widehat{H}^0(\mathcal{L}')$ to the generic subquotients of
$\widehat{H}^0(i_{P(E)}^{{\rm GL}_n(E)}\mathcal{L})$.  Recall that
$\eta_i\otimes_{\mathcal{K}} \overline{\mathbb{Q}}_\ell$ is the base
change lifting of $\xi_i$, and $\mathcal{L}_i$ is a
${\rm GL}_{n_i}(E)\rtimes {\rm Gal}(E/F)$ stable $\Lambda$-lattice in
$\eta_i$, for each $1\leq i\leq r$. Then using Theorem
\ref{Tate_bc_cuspidal}, we get that the Tate cohomology group
$\widehat{H}^0(\mathcal{L}_i)$ is isomorphic to
$r_\ell(\xi_i)^{(\ell)}$, and the equation
(\ref{Tate_restriction_map}) gives the isomorphism
$$
\widehat{H}^0(i_{P(E)}^{{\rm GL}_n(E)}
\mathcal{L}) \simeq 
i_{P(F)}^{{\rm GL}_n(F)}r_\ell(\xi)^{(\ell)}.
$$
Thus, the supercuspidal support
of the Frobenius twist $J_\ell(\pi)^{(\ell)}$
is same as the supercuspidal support of the
generic subquotients of 
the Tate cohomology
$\widehat{H}^0(\mathcal{L}')$.
Hence, $J_\ell(\pi)^{(\ell)}$
is a subquotient of
$\widehat{H}^0(\mathcal{L}')$. 
This completes the proof.
\end{proof}

\section{The unipotent cuspidal representation of
\texorpdfstring{${\rm Sp}_4(\mathbb{F}_q)$}{}}\label{sec_theta10}
Finite classical groups can have at most one irreducible
unipotent cuspidal representation. 
Let $G$ be a connected reductive group defined over $\mathbb{F}_q$,
and let $F:G\rightarrow G$ be the Frobenius endomorphism.  
Recall that an irreducible
representation of a finite reductive group $G^F$
is said to be unipotent if it occurs in a 
Deligne--Lusztig virtual character $\pm R_{T, 1}$, 
where $T$ is an $F$-stable torus in $G$. 
When $G$ is a classical group, if an irreducible  unipotent cuspidal 
representation exists, then it is invariant under automorphisms of the 
underlying algebraic group. Additionally, we have
\begin{enumerate}
    \item  Unipotent cuspidal representations of finite classical groups 
    are defined over $\mathbb{Q}$ (\cite{Lusztig_rationality_unipotent}).
    \item The mod-$\ell$ reduction of an unipotent cuspidal representation is 
irreducible (\cite[Theorem 1]{Malle_Dudas_unipotent_cuspidal}).
\end{enumerate}
So, the case of unipotent cuspidal representations for finite
classical groups is a natural testing ground for Treumann--Venkatesh
conjectures. In this section, we treat the group
${\rm Sp}_4(\mathbb{F}_q)$ which has a unique cuspidal unipotent
representation for all $q$.  Recall that ${\rm Sp}_4(\mathbb{F}_q)$ is
defined as the group of all $g\in {\rm GL}_4(\mathbb{F}_q)$ such that
$gJg^{t} = J$ (Note that $g^t$ is the transpose of $g$), where
$$ J =\begin{pmatrix}
	0 & I_2\\
	-I_2 & 0
\end{pmatrix}.$$
Let $F$ be the Frobenius endomorphism of ${\rm Sp}_4/\mathbb{F}_q$. 
In the famous article \cite{Srinivasan-Sp4}, 
B. Srinivasan discovered an irreducible 
cuspidal representation of the group ${\rm Sp}_4
(\mathbb{F}_q)$, which does not have Whittaker model, and this 
representation is denoted by $\theta_{10}$. The representation 
$\theta_{10}$ is an unipotent cuspidal representation. In what 
follows we assume that $\ell\neq 2$. 
\subsection{Mirabolic restriction of $\theta_{10}$}
In \cite{Golfand_Sp4}, Gol'fand proved 
that $\theta_{10}$, like in the case 
of irreducible cuspidal representations 
of ${\rm GL}_n(\mathbb{F}_q)$, has a mirabolic
restriction--which is irreducible. 
To recall these results, we begin by defining some subgroups. 
 Let $P$ be a Siegel parabolic subgroup of 
${\rm Sp}_4$, whose Levi subgroup $M$ 
and the unipotent radical $U$ such that
$$ 
M(\mathbb{F}_q) = \bigg\{
\begin{pmatrix}
	g & 0\\
	0 & w\, ^tg^{-1}w
\end{pmatrix}: g\in {\rm GL}_2(\mathbb{F}_q)\bigg\},\,\,\,\,
U(\mathbb{F}_q) = \bigg\{
\begin{pmatrix}
	I_2 & X\\
	0 & I_2
\end{pmatrix}: X\in M_2(\mathbb{F}_q),\, ^tX=X\bigg\},
$$
where $w=
\begin{pmatrix}
	0 & 1\\
	1 & 0
\end{pmatrix}$ and $I_2$ is the $2\times 2$ 
identity matrix. Fix a non-trivial additive 
character $\psi:\mathbb{F}_q\rightarrow 
\overline{\mathbb{Q}}_\ell^\ast$,
and an element $\tau\in\mathbb{F}_q^\times\setminus (\mathbb{F}_q^\times)^2$. 
We set
$$z=\begin{pmatrix}
	0 & 1\\
	-\tau & 0
      \end{pmatrix}.$$ Consider the character
      $\psi_z:U(\mathbb{F}_q) \rightarrow
      \overline{\mathbb{Q}}_\ell^\ast$, sending
$$\begin{pmatrix}
	I_2 & X\\
	0 & I_2
\end{pmatrix}\mapsto \psi({\rm Tr}(zX)).$$
Let $S(\mathbb{F}_q)$ be the stabilizer of $\psi_z$ in
$M(\mathbb{F}_q)$ and let
$$Y(\mathbb{F}_q) = S(\mathbb{F}_q)\,U(\mathbb{F}_q).$$ 
Then $\psi_z$ induces a character
$\widetilde{\psi_z} : Y(\mathbb{F}_q)\rightarrow
\overline{\mathbb{Q}}_\ell^\ast$, defined as
$$ \widetilde{\psi_z}(su) = 
{\rm det}(s)\,\psi_z(u), $$ for $s\in S(\mathbb{F}_q)$ and
$u\in U(\mathbb{F}_q)$. The induced representation
${\rm Ind}_{Y(\mathbb{F}_q)}^{P(\mathbb{F}_q)} (\widetilde{\psi_z})$
is irreducible and from \cite[Proposition 1]{Golfand_Sp4} we get
that the restriction ${\rm res}_{P(\mathbb{F}_q)}(\theta_{10})$ is
isomorphic to
${\rm Ind}_{Y(\mathbb{F}_q)} ^{P(\mathbb{F}_q)}
(\widetilde{\psi_z})$. Moreover, the mod-$\ell$ reduction of
${\rm Ind}_{Y(\mathbb{F}_q)}^ {P(\mathbb{F}_q)} (\widetilde{\psi_z})$
is irreducible and hence the mod-$\ell$ reduction
$r_\ell(\theta_{10})$ is also irreducible.
\subsection{}
Let $\sigma$ be the Frobenius element in
${\Gamma = \rm Gal}(\mathbb{F}_{q^\ell} /\mathbb{F}_q)$. We denote by
$\Pi$ and $\pi$ the unipotent $\ell$-adic cuspidal representation of
${\rm Sp}_4(\mathbb{F}_{q^\ell})$ and ${\rm Sp}_4(\mathbb{F}_q)$,
respectively. By uniqueness of $\Pi$ we have $\Pi\simeq
\Pi^\sigma$. Let $\eta$ be the additive character of
$\mathbb{F}_{q^\ell}$ given by the composition
$\psi\circ {\rm Tr}_{\mathbb{F}_{q^\ell}/\mathbb{F}_q}$, where
${\rm Tr}_{\mathbb{F}_{q^\ell}/\mathbb{F}_q}$ is the trace
function. Since $\ell$ is odd, we have
$\tau\in \mathbb{F}_{q^\ell} ^\times\setminus
(\mathbb{F}_{q^\ell}^\times)^2$. Let $\eta_z$ and $\widetilde{\eta}_z$
be the corresponding characters of $U(\mathbb{F}_{q^\ell})$ and
$Y(\mathbb{F}_{q^\ell})$, respectively.  Then, following the above
constructions, we get an irreducible $P(\mathbb{F}_{q^\ell})$
representation
${\rm Ind}_{Y(\mathbb{F}_{q^\ell})} ^{P(\mathbb{F}_{q^\ell})}
(\widetilde{\eta_z})$ such that the restriction
${\rm res}_ {P(\mathbb{F}_{q^\ell})}(\Pi)$ is isomorphic to
${\rm Ind}_{Y(\mathbb{F}_{q^l})}^{P(\mathbb{F}_{q^\ell})}
(\widetilde{\eta_z})$. We use the notations $\rho$ and $\rho_\ell$ for
the representations
${\rm Ind}_{Y(\mathbb{F}_q)} ^{P(\mathbb{F}_q)} (\widetilde{\psi_z})$
and
${\rm Ind}_{Y(\mathbb{F}_{q^\ell})}^{P(\mathbb{F}_{q^\ell})}
(\widetilde{\eta_z})$, respectively.
\subsubsection{}
Let $X$ and $X_\ell$ be the coset spaces
$Y(\mathbb{F}_{q})\backslash P(\mathbb{F}_{q})$ and
$Y(\mathbb{F}_{q^\ell})\backslash P(\mathbb{F}_{q^\ell})$,
respectively. Since $Y(\mathbb{F}_{q^\ell})$ is $\Gamma$ stable
subgroup of $P(\mathbb{F}_{q^\ell})$, we have the long exact sequence
of non-abelian cohomology (\cite[Appendix, Proposition
1]{Galois_cohomology_Serre})
\begin{equation}\label{exact_seq_1}
0\rightarrow Y(\mathbb{F}_{q})\rightarrow P(\mathbb{F}_{q})
\rightarrow X_\ell^\sigma\rightarrow
H^1(\Gamma,Y(\mathbb{F}_{q^\ell}))
\rightarrow H^1(\Gamma,P(\mathbb{F}_{q^\ell}))
\end{equation}
Note that $S(\mathbb{F}_{q^\ell})$ is a $\Gamma$ stable subgroup of
$Y(\mathbb{F}_{q^\ell})$ with
$S(\mathbb{F}_{q^\ell})^\sigma = S(\mathbb{F}_q)$, and we get the
following the long exact sequence of non-abelian cohomology
$$ 0\rightarrow S(\mathbb{F}_{q})\rightarrow
Y(\mathbb{F}_{q})\rightarrow U(\mathbb{F}_q)
\rightarrow H^1(\Gamma,S(\mathbb{F}_{q^\ell})) \rightarrow
H^1(\Gamma,Y(\mathbb{F}_{q^\ell})) \rightarrow
H^1(\Gamma,U(\mathbb{F}_{q^\ell})).$$ From the above exact sequence,
we observe that $H^1(\Gamma, S(\mathbb{F}_{q^\ell}))$ is isomorphic to
$H^1(\Gamma, Y(\mathbb{F}_{q^\ell}))$.  The group $S$ is precisely
given by
$$ \bigg\{\begin{pmatrix}
g & 0\\
0 & w\,^tg^{-1}w
\end{pmatrix}\in M(\overline{\mathbb{F}}_q) : gzg^\ast = z\bigg\}, $$
where $g^\ast$ is the transposition of $g$ with respect to the
second diagonal. Note that the determinant map $S\rightarrow \{\pm 1\}$, given by 
$g\mapsto {\rm det}(g)$ is surjective. 
The kernel of this determinant map, denoted by $S_1$,
is the connected component of $S$. Then we have the following
long exact sequence
$$ 0\rightarrow H^0(\Gamma, S_1)\rightarrow
H^0(\Gamma, S)\rightarrow H^0(\Gamma, \mathbb{Z}/2\mathbb{Z})
\rightarrow H^1(\Gamma,S_1) \rightarrow H^1(\Gamma,S) \rightarrow
H^1(\Gamma,\mathbb{Z}/2\mathbb{Z}).$$ 
Using \cite[Proposition
4.2.11]{Finte_groups_lie_type}, we get that $H^1(\Gamma,S_1)$ is
trivial. Since $\ell$ is odd,
we have $H^1(\Gamma,\mathbb{Z}/2\mathbb{Z})=\{0\}$.
Then it follows from the above exact
sequence that $H^1(\Gamma,S)$ is trivial, and so is
$H^1(\Gamma, Y(\mathbb{F}_{q^\ell}))$. Thus, the exact sequence
(\ref{exact_seq_1}) gives the equality $X_\ell^\sigma = X$.
\begin{theorem}
For any ${\rm Sp}_4(\mathbb{F}_{q^\ell})\rtimes 
{\rm Gal}(\mathbb{F}_{q^\ell}/\mathbb{F}_q)$ stable 
$\Lambda$-lattice 
$\mathcal{L}\subseteq \Pi$, the first 
Tate cohomology group $\widehat{H}^1(
{\rm Gal}(\mathbb{F}_{q^\ell}/\mathbb{F}_q), \mathcal{L})$ 
is trivial.
\end{theorem}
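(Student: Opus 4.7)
The plan is to reduce the computation to the Siegel parabolic $P(\mathbb{F}_{q^\ell})$, where the mirabolic model provides an explicit induced-representation model $\Pi|_{P(\mathbb{F}_{q^\ell})} \simeq \rho_\ell$, and to apply the Tate-cohomology-for-induced-representations isomorphism \eqref{Tate_restriction_map}, exploiting the already-established equality $X_\ell^\sigma = X$.

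First I observe that it is enough to prove $\widehat{H}^1(\Gamma, \mathcal{L}|_{P(\mathbb{F}_{q^\ell})}) = 0$: the Tate cohomology depends only on the $\Lambda[\Gamma]$-module structure of $\mathcal{L}$, so its underlying vector space coincides with that of $\widehat{H}^1(\Gamma, \mathcal{L})$, and a nonzero $\overline{\mathbb{F}}_\ell[{\rm Sp}_4(\mathbb{F}_q)]$-module restricts nonzero to $P(\mathbb{F}_q)$. Next, I introduce the canonical $\Gamma$-stable $\Lambda$-lattice $\mathcal{M} := {\rm ind}_{Y(\mathbb{F}_{q^\ell})}^{P(\mathbb{F}_{q^\ell})} \Lambda_{\widetilde{\eta_z}}$ in $\rho_\ell$, on which $\Gamma$ acts trivially on the coefficient $\Lambda_{\widetilde{\eta_z}}$ (well-defined since $\widetilde{\eta_z}$ takes values in $\Lambda^\times$ and is $\Gamma$-stable). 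By \eqref{Tate_restriction_map} together with $X_\ell^\sigma = X$,
\begin{equation*}
\widehat{H}^1(\Gamma, \mathcal{M}) \;\simeq\; {\rm ind}_{Y(\mathbb{F}_q)}^{P(\mathbb{F}_q)} \widehat{H}^1(\Gamma, \Lambda_{\widetilde{\eta_z}}) \;=\; 0,
\end{equation*}
because $\widehat{H}^1(\Gamma, \Lambda) = 0$ when $\Gamma$ acts trivially on the $\ell$-torsion-free module $\Lambda$.

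It remains to identify $\mathcal{L}|_{P(\mathbb{F}_{q^\ell})}$ with $\mathcal{M}$ as $\Lambda[P(\mathbb{F}_{q^\ell}) \rtimes \Gamma]$-modules. Both are $P(\mathbb{F}_{q^\ell})$-stable $\Lambda$-lattices in the absolutely irreducible representation $\rho_\ell \otimes_\Lambda \mathcal{K}$. Since $r_\ell(\rho_\ell)$ is irreducible (by the same argument that produced the irreducibility of $r_\ell(\rho)$ in the mirabolic discussion), an elementary lattice-theoretic argument---any intermediate $P(\mathbb{F}_{q^\ell})$-stable lattice between $\ell\mathcal{M}$ and $\mathcal{M}$ corresponds to a subrepresentation of the irreducible $r_\ell(\rho_\ell)$, hence must be either $\ell\mathcal{M}$ or $\mathcal{M}$---shows that any two such lattices are homothetic. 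Therefore, after rescaling the $P(\mathbb{F}_{q^\ell})$-equivariant isomorphism $\Pi|_{P(\mathbb{F}_{q^\ell})} \simeq \rho_\ell \otimes \mathcal{K}$, I may assume $\mathcal{L}|_{P(\mathbb{F}_{q^\ell})} = \mathcal{M}$ as $\Lambda[P(\mathbb{F}_{q^\ell})]$-modules.

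The main obstacle is the comparison of the two resulting $\sigma$-actions on the common lattice $\mathcal{M}$: one inherited from the $\Gamma$-action on $\mathcal{L}$, the other from the natural induced structure. Their composition $\sigma_1 \sigma_2^{-1}$ is a $\Lambda[P(\mathbb{F}_{q^\ell})]$-linear automorphism of $\mathcal{M}$ whose $\ell$-th power is the identity, so by Schur's lemma applied to the absolutely irreducible representation $\mathcal{M} \otimes_\Lambda \mathcal{K}$ it must be a scalar $\lambda \in \Lambda^\times$ with $\lambda^\ell = 1$. Since $\mathcal{K}$, being the maximal unramified extension of $\mathbb{Q}_\ell$, contains no nontrivial $\ell$-th root of unity (the extension $\mathbb{Q}_\ell(\mu_\ell)/\mathbb{Q}_\ell$ being totally ramified of degree $\ell-1$), we conclude $\lambda = 1$, and the two $\sigma$-actions coincide. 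Thus $\mathcal{L}|_{P(\mathbb{F}_{q^\ell})} \simeq \mathcal{M}$ as $\Lambda[P(\mathbb{F}_{q^\ell})\rtimes\Gamma]$-modules, yielding $\widehat{H}^1(\Gamma, \mathcal{L}) = 0$.
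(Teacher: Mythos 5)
Your proof is correct and follows essentially the same strategy as the paper: reduce to the Siegel parabolic via the mirabolic model, compute the Tate cohomology of the standard induced lattice $\mathcal{M}$ using the restriction isomorphism \eqref{Tate_restriction_map} together with $X_\ell^\sigma = X$, and deduce the general case from the fact that $r_\ell(\rho_\ell)$ irreducible forces uniqueness of lattices up to homothety. The one place you are more explicit than the paper is in checking that the $\sigma$-action on $\mathcal{L}|_{P(\mathbb{F}_{q^\ell})}$ inherited from $\Pi$ agrees with the standard induced $\sigma$-action on $\mathcal{M}$; the paper passes over this silently, and your Schur-plus-no-$\ell$-th-roots-of-unity-in-$\mathcal{K}$ argument is the right way to close that gap (just note that logically you should first invoke Schur to see $\sigma_1\sigma_2^{-1}$ is a scalar $\lambda$, and only then deduce $\lambda^\ell=1$ from $\sigma_1^\ell=\sigma_2^\ell=\mathrm{id}$, rather than asserting the $\ell$-th power is the identity at the outset).
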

\begin{proof}
Since ${\rm res}_{P(\mathbb{F}_{q^\ell})}(\Pi)$ 
is isomorphic to $\rho_\ell$, it suffices 
to show that $\widehat{H}^1(\mathcal{L})$ 
is trivial 
for any $P(\mathbb{F}_{q^\ell})\rtimes 
{\rm Gal}(\mathbb{F}_{q^\ell}/\mathbb{F}_q)$ stable 
$\Lambda$-lattice 
$\mathcal{L}$ in $\rho_\ell$. Recall that 
the action of 
${\rm Gal}(\mathbb{F}_{q^\ell}/\mathbb{F}_q)$
on $\rho_\ell$ 
is defined by 
$$ (\sigma f)(x) = f(\sigma^{-1}(x)),\ \  \sigma\in 
{\rm Gal}(\mathbb{F}_{q^\ell}/\mathbb{F}_q)$$ for all
$x\in P(\mathbb{F}_{q^\ell})$ and $f\in \rho_\ell$. Let $\mathcal{L}$
be the $P(\mathbb{F}_{q^\ell})\rtimes \Gamma$ stable
lattice in $\rho_\ell$ consisting of 
$\Lambda$-valued functions in $\rho_\ell$. Since
$\widetilde{\eta_z}(\sigma(y)) = \widetilde{\eta_z}(y)$ for
$y\in Y(\mathbb{F}_q)$, the Tate cohomology group
$\widehat{H}^1(\widetilde{\eta}_z)$ is trivial.  Then it follows from the
isomorphism (\ref{Tate_restriction_map}) that
$$ \widehat{H}^1(\mathcal{L}) = 0. $$
Since the mod-$\ell$ reduction $r_\ell(\rho_\ell)$ is irreducible, any
$P(\mathbb{F}_{q^\ell})$ stable lattice is homothetic to
$\mathcal{L}$.  Thus, we get that $\widehat{H}^1(\mathcal{L})$ is
trivial for any $\Lambda[P(\mathbb{F}_{q^\ell})\rtimes \Gamma]$ stable
lattice $\mathcal{L}$ in $\rho_\ell$.
\end{proof}
\subsubsection{}
Next, we show that the zeroth
Tate cohomology of the mod-$\ell$ reduction of $\Pi$ 
is irreducible and cuspidal.
\begin{lemma}\label{irr_Sp4}
Let $\Pi$ be the irreducible unipotent cuspidal representation of
${\rm Sp}_4(\mathbb{F}_{q^\ell})$. Then the Tate cohomology
group $\widehat{H}^0(r_\ell(\Pi))$ is irreducible and cuspidal.
\end{lemma}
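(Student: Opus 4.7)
The plan is to reduce both claims to the Siegel parabolic $P$, exploiting Gol'fand's mirabolic description of the unipotent cuspidal representation recalled above.

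For cuspidality, I first observe that the Jacquet functor for a proper parabolic of ${\rm Sp}_4(\mathbb{F}_{q^\ell})$ with unipotent radical $N$ is realised by the idempotent $e_N = |N|^{-1}\sum_{n\in N}n$, which is well defined over both $\Lambda$ and $\overline{\mathbb{F}}_\ell$ since $|N|$ is a power of $p$ and $p\neq\ell$. Consequently the Jacquet functor commutes with mod-$\ell$ reduction, so $r_\ell(\Pi)$ inherits cuspidality from $\Pi$. Applying Corollary \ref{Tate_cuspidal}, which is valid in the finite-group-of-Lie-type setting as noted at the start of Section \ref{sec_jac_mod}, yields that $\widehat{H}^0(r_\ell(\Pi))$ is cuspidal as a representation of ${\rm Sp}_4(\mathbb{F}_q)$.

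For irreducibility, it suffices to show $\widehat{H}^0(r_\ell(\Pi))|_{P(\mathbb{F}_q)}$ is irreducible. By Gol'fand's theorem and the fact that finite-group induction commutes with mod-$\ell$ reduction,
$$r_\ell(\Pi)|_{P(\mathbb{F}_{q^\ell})}\simeq r_\ell(\rho_\ell)={\rm Ind}_{Y(\mathbb{F}_{q^\ell})}^{P(\mathbb{F}_{q^\ell})}(r_\ell(\widetilde{\eta_z})).$$
Combining the equality $X_\ell^\sigma=X$ established just before the lemma with the isomorphism \eqref{Tate_restriction_map}, I obtain
$$\widehat{H}^0(r_\ell(\Pi))|_{P(\mathbb{F}_q)}\simeq {\rm Ind}_{Y(\mathbb{F}_q)}^{P(\mathbb{F}_q)}\big(\widehat{H}^0(r_\ell(\widetilde{\eta_z}))\big).$$
Because $r_\ell(\widetilde{\eta_z})$ is a one-dimensional module on which $\Gamma$ acts trivially and the norm map vanishes in characteristic $\ell$, the Tate cohomology $\widehat{H}^0(r_\ell(\widetilde{\eta_z}))$ is the full one-dimensional space, with $Y(\mathbb{F}_q)$-action given by the restriction $r_\ell(\widetilde{\eta_z})|_{Y(\mathbb{F}_q)}$.

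The remaining task is to check that the resulting induced representation is irreducible. The restriction of $r_\ell(\widetilde{\eta_z})$ to $U(\mathbb{F}_q)$ computes to $u\mapsto r_\ell(\psi)(\ell\cdot{\rm tr}(zX))$, since ${\rm Tr}_{\mathbb{F}_{q^\ell}/\mathbb{F}_q}$ acts on $\mathbb{F}_q$ as multiplication by $\ell$; as $\ell$ is a unit in $\mathbb{F}_q$, this remains a non-trivial additive character of precisely the Gol'fand form, and its stabiliser in $M(\mathbb{F}_q)$ is still $S(\mathbb{F}_q)$ because the defining equation $gzg^{\ast}=z$ is unchanged under scaling $z$ by a unit. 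The character on $S(\mathbb{F}_q)$ remains $\det$. Thus Gol'fand's argument giving irreducibility of ${\rm Ind}_{Y(\mathbb{F}_q)}^{P(\mathbb{F}_q)}(\widetilde{\psi_z})$ applies verbatim with $\psi$ replaced by $\psi(\ell\,\cdot)$, so $\widehat{H}^0(r_\ell(\Pi))|_{P(\mathbb{F}_q)}$ is irreducible, whence $\widehat{H}^0(r_\ell(\Pi))$ itself is irreducible. The main technical subtlety is verifying that the character produced by $\widehat{H}^0$ still has the non-degenerate stabiliser structure required by Gol'fand's argument; this is immediate from the invariance of $S$ under unit rescaling of $z$.
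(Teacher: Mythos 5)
Your proof is correct and follows essentially the same route as the paper: cuspidality via Corollary \ref{Tate_cuspidal} applied to the (still cuspidal) mod-$\ell$ reduction, and irreducibility by restricting to the Siegel parabolic $P$ and passing Tate cohomology through Gol'fand's mirabolic induction using the equality $X_\ell^\sigma = X$ together with the isomorphism \eqref{Tate_restriction_map}. The only cosmetic difference is at the final step, where you re-run Gol'fand's irreducibility argument for the modified character $\psi(\ell\cdot)$, whereas the paper observes that the resulting one-dimensional $Y(\mathbb{F}_q)$-module $r_\ell(\widetilde{\psi_z})^\ell$ is precisely the Frobenius twist $r_\ell(\widetilde{\psi_z})^{(\ell)}$, so that $\widehat{H}^0(r_\ell(\rho_\ell))\simeq r_\ell(\rho)^{(\ell)}$ and irreducibility is immediate since Frobenius twisting preserves irreducibility.
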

\begin{proof}
  Note that $r_\ell(\Pi)$ is irreducible.  Further, it follows from
  the exactness of the Jacquet functor and \cite[Chapter I, Section
  9.3]{Vigneras_modl_book} that $r_\ell(\Pi)$ is also cuspidal. By
  Corollary \ref{Tate_cuspidal}, we get that
  $\widehat{H}^0(r_\ell(\Pi))$ is cuspidal.

To prove the irreducibility of $\widehat{H}^0(r_\ell(\Pi))$, we
consider the following restriction to $P(\mathbb{F}_q)$ map
$$ \big({\rm Ind}_{Y(\mathbb{F}_{q^\ell})}
^{P(\mathbb{F}_{q^\ell})}
(r_\ell(\widetilde{\eta_z}))\big)^\sigma \longrightarrow
{\rm Ind}_{Y(\mathbb{F}_{q})}^{P(\mathbb{F}_{q})}
(r_\ell(\widetilde{\psi_z})^\ell) $$ 
$$ f \longmapsto {\rm res}_{P(\mathbb{F}_q)}(f). $$
The above map factors through $\widehat{H}^0(r_\ell(\rho_\ell))$ and
induces the following $P(\mathbb{F}_q)$ isomorphism (see Subsection
\ref{section_kirrilov_rep})
$$ \widehat{H}^0(r_\ell(\rho_\ell)) 
\simeq r_\ell(\rho)^{(\ell)}. $$ Since the restriction
${\rm res}_{P(\mathbb{F}_{q^\ell})}(r_\ell(\Pi))$ is isomorphic to
$r_\ell(\rho_\ell)$, the Tate cohomology
$\widehat{H}^0(r_\ell(\Pi))$, being $P(\mathbb{F}_q)$ isomorphic to
$r_\ell(\rho)^{(\ell)}$, is an irreducible representation of
$P(\mathbb{F}_q)$, and hence of ${\rm Sp}_4(\mathbb{F}_q)$.  This
shows the lemma.
\end{proof}
\subsection{}
Next, we show that the Tate cohomology group
$\widehat{H}^0(r_\ell(\Pi))$ is isomorphic to
$r_\ell(\pi)^{(\ell)}$. Let $Q$ be the Klingen parabolic subgroup of
${\rm Sp}_4$ consisting of matrices $(g_{ij})\in {\rm GL}_4$ with
$g_{21}=g_{31}=g_{42}=g_{43}=0$.  Let $L$ be the Levi subgroup of $Q$
and $H$ be the unipotent radical of $Q$.  Then $L(\mathbb{F}_q)$ is
isomorphic to $\mathbb{F}_q^\times \times {\rm SL}_2(\mathbb{F}_q)$,
and $H(\mathbb{F}_q)$ consists of matrices of the form
$$ h=
\begin{pmatrix}
1 & x & y & z\\
0 & 1 & 0 & y\\
0 & 0 & 1 & -x\\
0 & 0 & 0 & 1
\end{pmatrix},\, x,y,z\in\mathbb{F}_q. $$
\subsubsection{}
Let $\xi$ denote the irreducible representation of $H(\mathbb{F}_q)$
on the space $\overline{\mathbb{Q}}_\ell[\mathbb{F}_q]$ consisting of
$\overline{\mathbb{Q}}_\ell$-valued functions on $\mathbb{F}_q$,
defined by
\begin{equation}\label{action_unipotent}
(\xi(h)\varphi)(t) = \psi(z+2ty+xy)\,
\varphi(x+t),
\end{equation}
for $h\in H(\mathbb{F}_q)$ and
$\varphi \in \overline{\mathbb{Q}}_\ell [\mathbb{F}_q]$. Let
$A(\mathbb{F}_q)$ be the subgroup
$\{\pm I_4\}\,{\rm SL}_2(\mathbb{F}_q) H(\mathbb{F}_q)$, where $I_4$
is the $4\times 4$ identity matrix. The $H(\mathbb{F}_q)$
representation $\xi$ is extended to a representation of
$A(\mathbb{F}_q)$ where the actions of $\{\pm I_4\}$ and
${\rm SL}_2(\mathbb{F}_q)$ are defined as in \cite[equations $(1)$ and
$(2)$]{Golfand_Sp4}. The restriction
${\rm res}_{{\rm SL}_2(\mathbb{F}_q)}(\xi)$ splits into two
irreducible representations $\xi^{+}$ and $\xi^{-}$ acting on the
spaces of even and odd functions in
$\overline{\mathbb{Q}}_\ell[\mathbb{F}_q]$, respectively. We further
extend the representation $\xi^{-}$ trivially to the group
$A(\mathbb{F}_q)$ and set $\widetilde{\xi} = \xi\otimes \xi^{-}$.  The
induced representation
${\rm Ind}_{A(\mathbb{F}_q)}^{Q(\mathbb{F}_q)} (\widetilde{\xi})$ is
irreducible, which we denote by $\pi_1$.  Similarly, we have an
irreducible representation
$\pi_{1,\tau} = {\rm Ind}_{A(\mathbb{F}_q)}^{Q(\mathbb{F}_q)}
(\widetilde{\xi_\tau})$, where $\widetilde{\xi_\tau}$ is the
representation of $A(\mathbb{F}_q)$ defined analogously to
$\widetilde{\xi}$ by replacing the character $\psi(t)$ by
$\psi_\tau(t) = \psi(\tau t)$. Then the restriction
${\rm res}_{Q(\mathbb{F}_q)}(\pi)$ is isomorphic to the direct sum
$\pi_1 \oplus \pi_{1,\tau}$ (\cite[Proposition 2]{Golfand_Sp4}).
\subsubsection{}
Using similar constructions as in the preceding subsection, we get
that the restriction ${\rm res}_{Q(\mathbb{F}_{q^\ell})}(\Pi)$ is
isomorphic to the direct sum $\Pi_1\oplus \Pi_{1,\tau}$, where $\Pi_1$
and $\Pi_{1,\alpha}$ are the irreducible induced representations
${\rm Ind}_{A(\mathbb{F}_{q^\ell})}^
{Q(\mathbb{F}_{q^\ell})}
(\widetilde{\xi}_\ell)$ and
${\rm Ind}_{A(\mathbb{F}_{q^\ell})}^
{Q(\mathbb{F}_{q^\ell})}
(\widetilde{\xi_\tau}_\ell)$ respectively, 
and the irreducible
representations $\widetilde{\xi}_\ell$
(resp. $\widetilde{\xi_\tau}_\ell$) is defined analogously to
$\widetilde{\xi}$ (resp. 
$\widetilde{\xi_\tau}$) by replacing the
character $\psi$ by
$\eta=\psi\circ{\rm Tr}_{\mathbb{F}_{q^\ell}/\mathbb{F}_q}$.  Note
that $\Pi_1$ and $\Pi_{1,\tau}$ are endowed with an action of
$\Gamma$, given by
$$ (\sigma f)(x) = f(\sigma^{-1}(x)),\,\,
\forall x\in Q(\mathbb{F}_{q^\ell}),\, f\in \Pi_1\,(\text{or}\,\,\Pi_{1,\tau}). $$
Let $\chi_{\widetilde{\xi}_\ell}$ and
$\chi_{\widetilde{\xi_\tau}_\ell}$ be the characters of
$\widetilde{\xi}_\ell$ and $\widetilde{\xi_\tau}_\ell$ respectively.
Then using the relation (\ref{action_unipotent}) and \cite[Equations
$(1)$ and $(2)$]{Golfand_Sp4}, we get the following identity of
characters:
\begin{equation}\label{char_symp}
\overline{\chi_{\widetilde{\xi}_\ell}(g)} 
= \overline{\chi_{\widetilde{\xi}}(g)}^\ell
\,\,\,\,\,\text{and}\,\,\,\,
\overline{\chi_{\widetilde{\xi_\tau}_\ell}(g)} 
= \overline{\chi_{\widetilde{\xi_\tau}}(g)}^\ell
\end{equation}
for all $g\in A(\mathbb{F}_q)$. We now prove the following theorem.
\begin{theorem}
  Let $\ell$ be an odd prime number.  Let $\Gamma$ be the Galois group
  ${\rm Gal}(\mathbb{F}_{q^\ell}/\mathbb{F}_q)$, and let $\sigma$ be
  the Frobenius element in $\Gamma$. Let $\pi$ and $\Pi$ be the
  irreducible unipotent $\ell$-adic cuspidal representation of
  ${\rm Sp}_4(\mathbb{F}_q)$ and ${\rm Sp}_4(\mathbb{F}_{q^\ell})$,
  respectively.  Then the Tate cohomology group
  $\widehat{H}^0(r_\ell(\Pi))$ is isomorphic to the Frobenius twist
  $r_\ell(\pi)^{(\ell)}$.
\end{theorem}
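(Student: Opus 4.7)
The plan is to combine the irreducibility from Lemma \ref{irr_Sp4} with a mod-$\ell$ character identity, mirroring the strategy of Theorem \ref{TV_finite_field}. Fix any $\Lambda[{\rm Sp}_4(\mathbb{F}_{q^\ell})\rtimes\Gamma]$-stable lattice $\mathcal{L}\subset\Pi$. The preceding theorem gives $\widehat{H}^1(\Gamma,\mathcal{L})=0$, so the long exact sequence of Tate cohomology associated to $0\to\mathcal{L}\xrightarrow{\ell}\mathcal{L}\to r_\ell(\Pi)\to 0$ yields $\widehat{H}^0(\Gamma,\mathcal{L})\simeq\widehat{H}^0(\Gamma,r_\ell(\Pi))$ as ${\rm Sp}_4(\mathbb{F}_q)$-modules, and Lemma \ref{char_iden_1} furnishes the trace formula
$$\operatorname{Tr}(g\mid\widehat{H}^0(r_\ell(\Pi)))=\overline{\chi_\Pi(g)},\qquad g\in{\rm Sp}_4(\mathbb{F}_q).$$

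Both sides of the desired isomorphism are irreducible $\overline{\mathbb{F}}_\ell[{\rm Sp}_4(\mathbb{F}_q)]$-modules: the left-hand side by Lemma \ref{irr_Sp4}, and the right-hand side because the mod-$\ell$ reduction of the unipotent cuspidal $\pi$ is irreducible by Dudas--Malle. Hence by the character criterion of \cite[Corollary 7.21]{Lam_noncommutative_rings} it is enough to show the two characters agree. Since $\pi$ is $\mathbb{Q}$-rational, $\chi_\pi$ is integer-valued, and Fermat's little theorem gives $\overline{\chi_\pi(g)}^\ell=\overline{\chi_\pi(g)}$. Thus the theorem reduces to proving the congruence
$$\chi_\Pi(g)\equiv\chi_\pi(g)\pmod{\ell},\qquad g\in{\rm Sp}_4(\mathbb{F}_q).$$

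I would verify this congruence by Jordan decomposition $g=su$. For $s$ non-elliptic, i.e., conjugate into the Levi of the Siegel or the Klingen parabolic, the identity follows from parabolic restrictions. The Siegel case is already contained in Lemma \ref{irr_Sp4}. For the Klingen parabolic, first verify the coset identity $(Q(\mathbb{F}_{q^\ell})/A(\mathbb{F}_{q^\ell}))^\sigma=Q(\mathbb{F}_q)/A(\mathbb{F}_q)$ via an $H^1$-vanishing argument in non-abelian cohomology parallel to the Siegel computation (using that $\ell$ is odd), then apply the induction-compatibility isomorphism \eqref{Tate_restriction_map} to the decomposition $\Pi|_{Q(\mathbb{F}_{q^\ell})}\simeq\Pi_1\oplus\Pi_{1,\tau}$ and combine the character identity \eqref{char_symp} with Lemma \ref{char_iden_1} at the level of $A(\mathbb{F}_{q^\ell})\rtimes\Gamma$ to match the $Q(\mathbb{F}_q)$-characters of $\widehat{H}^0(r_\ell(\Pi))$ and $r_\ell(\pi)^{(\ell)}$.

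The main obstacle is the elliptic case, where $s$ lies in an anisotropic maximal torus of ${\rm Sp}_4(\mathbb{F}_q)$ and is therefore not conjugate into any proper parabolic. Here I would invoke the Deligne--Lusztig character formula for the unipotent cuspidal $\theta_{10}$ and argue as in Proposition \ref{char_iden_3}: Springer's polynomial expansion of Green functions yields the congruence $Q_{H,T,F^\ell}(u)\equiv Q_{H,T,F}(u)^\ell\pmod{\ell}$, while the sum of $\eta$-values over the relevant orbit of the Weyl group reduces modulo $\ell$, via Lemma \ref{char_iden_2}, to an $\ell$-th power of the corresponding $\theta$-sum appearing in $\chi_\pi(su)$. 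Combined with an analog of Lemma \ref{Conjugacy_maximal_torus} classifying $F^\ell$-stable maximal tori of centralizers of elliptic elements in ${\rm Sp}_4$ up to rational conjugacy, this establishes $\chi_\Pi(su)\equiv\chi_\pi(su)\pmod{\ell}$ and completes the proof.
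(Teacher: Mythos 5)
Your overall strategy matches the paper's: deduce the trace formula on $\widehat{H}^0$ from Lemma \ref{char_iden_1} and the vanishing $\widehat{H}^1(\Gamma,\mathcal{L})=0$ from the preceding theorem, establish the congruence of characters modulo $\ell$, note that both sides are irreducible (Lemma \ref{irr_Sp4} and Dudas--Malle), and conclude via \cite[Corollary 7.21]{Lam_noncommutative_rings}. The observation that $\mathbb{Q}$-rationality of $\pi$ plus Fermat's little theorem makes $\overline{\chi_\pi(g)}^\ell=\overline{\chi_\pi(g)}$ is a clean simplification that the paper does not use explicitly, but the paper's target $\overline{\chi_\Pi}=\overline{\chi_\pi}^\ell$ and your target $\chi_\Pi\equiv\chi_\pi\pmod\ell$ are the same thing. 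The Siegel case is handled identically, and the Klingen case is the same in substance: the paper verifies $Y_\ell^\sigma=Y$ (implicitly by the same kind of $H^1$-vanishing as in the Siegel case) and then reduces the Mackey sum modulo the free $\Gamma$-action on $Y_\ell\setminus Y$, which is what your appeal to \eqref{Tate_restriction_map} and Lemma \ref{char_iden_1} at the level of $A(\mathbb{F}_{q^\ell})\rtimes\Gamma$ encodes.

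The genuine divergence, and the gap, is in the elliptic case. After observing that any $g=su$ with $u\neq 1$ lies in a proper $F$-stable parabolic (the canonical parabolic of $u$), and any $g=s$ with $s$ non-regular also does, the only remaining case is $u=1$ with $s$ regular elliptic; there the paper simply records the direct character computation $\chi_\Pi(s)=1=\chi_\pi(s)$ and is done. Your plan to ``argue as in Proposition \ref{char_iden_3}'' would not go through as written, for two concrete reasons. First, $\theta_{10}$ is \emph{not} of the form $\pm R_{T,\theta}$ for a single maximal torus: unlike a cuspidal of ${\rm GL}_n$, the cuspidal unipotent of ${\rm Sp}_4$ is a $\pm\tfrac12$-linear combination of $R_{T,1}$ over several rational classes of tori, so the single-torus Deligne--Lusztig manipulations of Proposition \ref{char_iden_3} (and the appeal to Lemma \ref{char_iden_2}, which is a ${\rm GL}_n$-specific statement about tensor products and $\theta$-sums in general position) do not directly apply; you would need to handle the fractional coefficients and the whole combination. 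Second, ${\rm Sp}_4$ carries no Shintani base change in the sense underlying Proposition \ref{char_iden_3}; here $\pi$ and $\Pi$ are simply the unique unipotent cuspidals of ${\rm Sp}_4(\mathbb{F}_q)$ and ${\rm Sp}_4(\mathbb{F}_{q^\ell})$, so the ``Shintani descent'' framing is the wrong analogy. Finally, the Springer/Green-function congruences and the proposed ``analog of Lemma \ref{Conjugacy_maximal_torus}'' are vacuous in the only elliptic case actually needed, since $u=1$ forces $Q_{H,T,F}(u)=1$ and $C_G(s)^0$ is already a torus. The missing ingredient in your sketch is precisely the explicit character computation for $\theta_{10}$ at elliptic regular semisimple elements; once you have that, no further machinery is required.
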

\begin{proof}
Let $\chi_\Pi$ and $\chi_\pi$ be the 
characters of $\Pi$ and $\pi$ respectively. 
We will show that 
$$ \overline{\chi_\Pi(g)} = 
\overline{\chi_\pi(g)}^\ell, $$ for all
$g\in {\rm Sp}_4(\mathbb{F}_q)$.  Let $g=su=us$ be the Jordan
decomposition of $g$.  Note that either $g$ is contained in a
parabolic subgroup of ${\rm Sp}_4(\mathbb{F}_q)$ or $u$ is the
identity element and $s$ is regular.  If $s$ is a regular semisimple
element, we have $\chi_\Pi(s) = 1 = \chi_\pi(s)$.  Assume that $g$
belongs to a parabolic subgroup $P(\mathbb{F}_q)$ or
$Q(\mathbb{F}_q)$.  If $g\in P(\mathbb{F}_q)$, then it follows from
the arguments of Lemma \ref{irr_Sp4} that
$$ \widehat{H}^0(r_\ell(\Pi)) 
\simeq r_\ell(\pi)^{(\ell)} $$
as a representation of $P(\mathbb{F}_q)$. 
Thus we have
$$ \overline{\chi_\Pi(g)} =
\overline{\chi_\pi(g)}^\ell $$
for all $g\in P(\mathbb{F}_q)$. 

Suppose $g\in Q(\mathbb{F}_q)$. Note that
$\chi_\Pi = \chi_{\Pi_1} + \chi_{\Pi_{1,\tau}}$ and 
$\chi_\pi = \chi_{\pi_1} + \chi_{\pi_{1,\tau}}$.
Using character formula for 
induced representations, we have
\begin{equation}\label{symp_equ_1}
\chi_{\Pi_1}(g) = \sum_{y\in Y_\ell,\,ygy^{-1}\in A(\mathbb{F}_{q^\ell})}
\chi_{\widetilde{\xi}_\ell}(ygy^{-1})
\end{equation}
and
\begin{equation}\label{symp_equ_2}
\chi_{\pi_1}(g) = \sum_{y\in Y,\,ygy^{-1}\in
A(\mathbb{F}_q)}\chi_{\widetilde{\xi}}(ygy^{-1}),
\end{equation}
where $Y_\ell$ and $Y$ denote the right coset spaces
$A(\mathbb{F}_{q^\ell})\backslash Q(\mathbb{F}_{q^\ell})$ and
$A(\mathbb{F}_q) \backslash Q(\mathbb{F}_q)$ respectively, with
$Y_\ell^\sigma = Y$.  Since $\Gamma$ acts freely on
$Y_\ell\setminus Y$, there exists a set
$\mathcal{U}\subseteq Y_\ell\setminus Y$ such that $Y_\ell\setminus Y$
is the disjoint union of $\sigma^i\mathcal{U}$, $0\leq i\leq
\ell-1$. Also note that
$\chi_{\widetilde{\xi}_\ell}(\sigma(a)) =
\chi_{\widetilde{\xi}_\ell}(a)$ for all
$a\in\mathbb{A}(\mathbb{F}_{q^\ell})$. Then the identity
(\ref{symp_equ_1}) becomes
$$  \chi_{\Pi_1}(g) = \sum_{y\in Y,\,
ygy^{-1}\in A(\mathbb{F}_{q})}
\chi_{\widetilde{\xi}_\ell}(ygy^{-1})\,\,+\,\,
\ell\sum_{u\in\mathcal{U},\,ugu^{-1}\in
A(\mathbb{F}_{q^\ell})}
\chi_{\widetilde{\xi}_\ell}(ugu^{-1}). $$
Using the relations (\ref{char_symp})
and taking mod-$\ell$ reduction to the
above equation, we get
$$ \overline{\chi_{\Pi_1}(g)} = \sum_{y\in Y,\,
  ygy^{-1}\in A(\mathbb{F}_{q})}
\overline{\chi_{\widetilde{\xi}}(ygy^{-1})}^\ell. $$ Now, taking
mod-$\ell$ reduction to the relation (\ref{symp_equ_2}) and comparing
it with the above identity, we get
$$ \overline{\chi_{\Pi_1}(g)} =
\overline{\chi_{\pi_1}(g)}^\ell. $$
Similarly, we have
$$ \overline{\chi_{\Pi_{1,\tau}}(g)} =\overline{\chi_{\pi_{1,\tau}}(g)}^\ell. $$
Thus, we get the character identity
$$ \overline{\chi_\Pi(g)} =
\overline{\chi_\pi(g)}^\ell, $$ for all
$g\in {\rm Sp}_4(\mathbb{F}_q)$. Using \cite[Corollary
7.21]{Lam_noncommutative_rings}, we conclude that the Tate cohomology
$\widehat{H}^0(r_\ell(\Pi))$ is isomorphic to $r_\ell(\pi)^{(\ell)}$.

% So it remains to prove that the restriction
% ${\rm res}_{Q(\mathbb{F}_q)}(\widehat{H}^0(\mathcal{L}))$ is
% isomorphic to ${\rm res}_{Q(\mathbb{F}_q)}(r_l(\pi)^{(l)})$. Recall
% that we have $Q(\mathbb{F}_{q^l})\rtimes \langle\sigma\rangle$
% equivariant isomorphism
%$$ {\rm res}_{Q(\mathbb{F}_{q^l})}(\Pi) \simeq \Pi_1\oplus \Pi_{1,\alpha}, $$
%which gives the following isomorphism 
%$$ {\rm res}_{Q(\mathbb{F}_{q^l})}(\mathcal{L}) \simeq \mathcal{L}_1\oplus \mathcal{L}_{1,\alpha}, $$
%where $\mathcal{L}_1$ and $\mathcal{L}_{1,\alpha}$ are
% $Q(\mathbb{F}_{q^l})\rtimes \langle\sigma\rangle$ stable
% $\overline{\mathbb{Z}}_l$-lattices in $\Pi_1$ and $\Pi_{1,\alpha}$
% respectively. Since the lattice $\mathcal{L}_1$ is homothetic to
% $\Pi_1^0$, it follows from (\ref{symp_isom_1}) that
% $\widehat{H}^0(\mathcal{L}_1)$ is isomorphic to
% $r_l(\pi_1)^{(l)}$. Similarly,
% $\widehat{H}^0(\mathcal{L}_{1,\alpha})$ is isomorphic to
% $r_l(\pi_{1,\tau})^{(l)}$ (by (\ref{symp_isom_2})). Thus, the Tate
% cohomology $\widehat{H}^0(\mathcal{L})$ is $Q(\mathbb{F}_q)$
% isomorphic to $r_l(\pi)^{(l)}$. This completes the proof.
\end{proof}

\section{Conflict of interest statement}
On behalf of all authors, the corresponding author states that there
is no conflict of interest.

\section{Date statement}
No associated data

\bibliography{biblio} 
\bibliographystyle{amsalpha}
Sabyasachi Dhar,\\
\texttt{mathsabya93@gmail.com},
\texttt{sabya@iitk.ac.in}\\
Santosh Nadimpalli, \\
\texttt{nvrnsantosh@gmail.com}, \texttt{nsantosh@iitk.ac.in}.\\
Department of Mathematics and Statistics, Indian
Institute of Technology Kanpur, U.P. 208016, India.
\end{document}